\documentclass[12pt]{amsart}
\usepackage{amsfonts}
\usepackage{amssymb,amstext,amscd,amsmath,mathdots}
\usepackage{fullpage}
\usepackage[all]{xy}
\usepackage[usenames,dvipsnames]{color}
\allowdisplaybreaks[4] 
\usepackage{graphicx}
\usepackage{bbding}
\usepackage[misc]{ifsym}
\usepackage{cite}

\numberwithin{equation}{section}
\theoremstyle{plain}
\newtheorem{thm}{Theorem}[section]
\newtheorem{cor}[thm]{Corollary}
\newtheorem{prop}[thm]{Proposition}
\newtheorem{lem}[thm]{Lemma}

\theoremstyle{definition}

\newtheorem{rems}[thm]{Remarks}
\newtheorem{defn}[thm]{Definition}

\newtheorem{ques}{Question}

\newcommand{\bC}{{\mathbb{C}}}

\newcommand{\bN}{{\mathbb{N}}}

  \newcommand{\B}{{\mathcal{B}}}

\renewcommand{\H}{{\mathcal{H}}}

\renewcommand{\P}{{\mathcal{P}}}

\renewcommand{\phi}{\varphi}
\newcommand{\upchi}{{\raise.35ex\hbox{$\chi$}}}

\newcommand{\ran}{\operatorname{Ran}}

\begin{document}

\title{On the Power Set of Quasinilpotent Operators in Banach Spaces}

\author[C. L. Hu]{ChaoLong Hu}
\address{ChaoLong HU: School of Mathematics\\Jilin University\\Changchun 130012\\P.R. CHINA;
School of Mathematics and Science\\Henan Institute of Science and Technology\\ Xinxiang 453003\\ P.R. CHINA}
\email{huzl21@mails.jlu.edu.cn}

\author[Y. Q. Ji]{YouQing Ji}
\address{YouQing JI: School of Mathematics\\Jilin University\\Changchun 130012\\P.R. CHINA}
\email{jiyq@jlu.edu.cn}

\author[D. H. Liang]{DingHao Liang}
\address{DingHao LIANG: School of Mathematics\\Jilin University\\Changchun 130012\\P.R. CHINA}\email{liangdh24@mails.jlu.edu.cn}

\thanks{Supported by NNSF of China Grant No.12271202.}
\begin{abstract}
  For a quasinilpotent operator $T$ on a Banach space $X$,
  Douglas and Yang defined
  $k_{x}=\limsup\limits_{\lambda\rightarrow 0}\frac{\ln\|(\lambda-T)^{-1}x\|}{\ln\|(\lambda-T)^{-1}\|}$
  for each non-zero vector $x$, and called $\Lambda(T)=\{k_x:x\neq 0\}$ the
  {\it power set} of $T$.
  In this paper, we prove that $\Lambda(T)$ always contains $1$ for every quasinilpotent operator $T$ on $X$.
  Moreover, we introduce the concept of a Banach space $X$ having uniform multiplicity infinity and prove that some classical Banach spaces possess this property.
  As an application,
  we show that if $\sigma\subset [0,1]$ is right closed and contains $1$, then there exists a quasinilpotent operator $T$ on a class of Banach spaces with uniform multiplicity infinity such that $\Lambda(T)=\sigma$.
\end{abstract}
\subjclass[2010]{Primary 47A10; Secondary 47B37}
\keywords{Quasinilpotent operator, Power set, Invariant subspace, Banach space}
\maketitle


\section{Introduction}

Throughout this paper, we denote by $X$ a complex Banach space, and by $\H$ a complex Hilbert space.
The symbol $\B(X)$ stands for the set of bounded linear operators on $X$.
For $T\in \mathcal{B}(X)$, let $\sigma(T)$ be the spectrum of $T$.
A closed subspace $M\subset X$ is called an invariant subspace under the operator $T$ if $TM\subset M$.
The famous \emph{invariant subspace problem} asks:

{\em Does every bounded linear operator on a separable, infinite dimensional, complex Hilbert space have a non-trivial closed invariant subspace?}

The invariant subspace problem is still unsolved.
Nevertheless,
numerous results indicate that spectral theory and the invariant subspace problem are closely related.
For instance, it is well known that the Riesz decomposition theorem implies that if the spectrum of a bounded linear operator $T$ on $X$ is not connected, then $T$ has a non-trivial invariant subspace.
Brown, Chevreau and Pearcy \cite{BCP1979} proved that if $T\in\B(X)$ with $\|T\|=1$ and $\sup\{|f(z)|:z\in\sigma(T)\cap \mathbb{D}\}=\|f\|_{\infty}$ for every $f$ in $H^{\infty}(\mathbb{D})$,
then $T$ has a non-trivial invariant subspace.
Brown \cite{Brown1987} showed that a hyponormal operator $T$ has a non-trivial invariant subspace whenever $R(\sigma(T))\neq C(\sigma(T))$, where $C(\sigma(T))$ denotes the Banach algebra of all continuous functions defined on the spectrum of $T$ and $R(\sigma(T))$ is the closure in $C(\sigma(T))$ of the algebra of rational functions with poles off $\sigma(T)$.
For more introductions on the invariant subspace problem, one can see \cite{CP2011,RR03,Sar74}.

A special case for which the invariant subspace problem is
still open is that of quasinilpotent operators on $\H$.
Recall that an operator $T$ is said to be {\em quasinilpotent} if $\sigma(T)=\{0\}$.
Since the spectrum of a quasinilpotent operator is a singleton, no further information can be obtained from it.
Therefore, the invariant subspace problem for quasinilpotent operators is particularly difficult.
We refer the readers to \cite{FJKP04,FJKP05,FP05,Read97,Tcaciuc2019} and the references therein for more information along this line.

The \emph{power set} $\Lambda(T)$ of a quasinilpotent operator $T$ on $X$ was introduced by  Douglas and Yang (see \cite{DY2,DY3}).
For the reader's convenience, we recall it below.
\begin{defn}
Suppose that $T\in\mathcal{B}(X)$ is quasinilpotent and $x\in X\setminus\{0\}$. Let
$$k_{T}(x)=\limsup\limits_{\lambda\rightarrow 0}\frac{\ln\|(\lambda-T)^{-1}x\|}{\ln\|(\lambda-T)^{-1}\|}.$$
Usually, we briefly write $k_x$ instead of $k_{T}(x)$ if there is no confusion.
Set $\Lambda(T)=\{k_x : x\ne 0\}$, and call it the {\em power set} of $T$.
\end{defn}
\begin{rems}
For a quasinilpotent operator $T\in\B(X)$, we list some basic facts about $\Lambda(T)$.
\end{rems}
\begin{itemize}
  \item [(1)] From
  $$\frac{\|x\|}{|\lambda|+\|T\|}\leq\|(\lambda-T)^{-1}x\|\leq\|(\lambda-T)^{-1}\|\|x\|,~~ \forall \lambda\ne 0,$$
  it follows that $\Lambda(T)\subset [0,1]$.
  \item [(2)]
  Since $\lim\limits_{\lambda\rightarrow 0}\|(\lambda-T)^{-1}\|=+\infty$ when $\sigma(T)=\{0\}$,
  it holds that $k_{(\alpha x)}=k_x$ for all $\alpha\ne 0$ and $x\ne 0$.
  Thus, $\Lambda(T)=\{k_x:\|x\|=1\}$.
  \item [(3)]
  It is not hard to check that the power set is invariant under dilation, i.e.,
  $\Lambda(\alpha T)=\Lambda(T)$ for all $\alpha\neq 0$.
  \item [(4)]
  $\Lambda(T)=\Lambda(A)$ if $A$ is similar to $T$ {\cite[Proposition 5.2]{DY3}}.
\end{itemize}

Recall that a closed subspace $M\subset X$ is called {\em hyperinvariant subspace} for $T$ if $AM\subset M$ for every $A$ commuting with $T$.
The following result of
Douglas and Yang established a link between the power set and hyperinvariant subspace.
\begin{thm}[see {\cite[Proposition 7.1, Corollary 7.2]{DY3}}]\label{pthy}
Let $T\in \mathcal{B}(X)$ be a quasinilpotent operator.
For $\tau\in[0,1]$, write $M_\tau=\{x : k_{x}\leq\tau\}\cup\{0\}$.
Then, $M_\tau$ is a linear subspace of $X$, and $A(M_\tau)\subset M_\tau$ for every $A$ commuting with $T$.

In particular, when $\Lambda(T)$ contains two different points $\tau$ with closed $M_\tau$,
$T$ has a nontrivial hyperinvariant subspace.
\end{thm}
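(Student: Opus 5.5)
We must show three things: $M_\tau$ is a linear subspace, it is invariant under every $A$ commuting with $T$, and the "in particular" clause. Throughout write $R(\lambda)=(\lambda-T)^{-1}$ and $\phi(\lambda)=\ln\|R(\lambda)\|$. Since $\sigma(T)=\{0\}$, $\phi(\lambda)\to+\infty$ as $\lambda\to 0$, so for $\lambda$ near $0$ we may divide by $\phi(\lambda)$ freely. The estimates below are all stated for $\lambda$ in a small punctured disc around $0$.

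\emph{Linearity.} By Remark (2), $k_{\alpha x}=k_x$ for $\alpha\neq 0$, so $M_\tau$ is closed under nonzero scalars, and $0\in M_\tau$ by definition. For sums, take $x,y\in M_\tau$ with $x+y\neq 0$. Then
\[
\|R(\lambda)(x+y)\|\le 2\max\{\|R(\lambda)x\|,\|R(\lambda)y\|\}.
\]
Taking logarithms gives $\ln\|R(\lambda)(x+y)\|\le \ln 2+\max\{\ln\|R(\lambda)x\|,\ln\|R(\lambda)y\|\}$. We divide by $\phi(\lambda)>0$ and take $\limsup$ as $\lambda\to0$. The term $\ln 2/\phi(\lambda)$ tends to $0$, and the $\limsup$ of a maximum equals the maximum of the $\limsup$'s. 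Hence $k_{x+y}\le\max\{k_x,k_y\}\le\tau$.

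\emph{Hyperinvariance.} Let $AT=TA$. Then $A$ commutes with $R(\lambda)$, so $R(\lambda)Ax=AR(\lambda)x$. If $Ax=0$ there is nothing to prove. Otherwise
\[
\|R(\lambda)Ax\|\le\|A\|\,\|R(\lambda)x\|,
\]
and the same argument gives $k_{Ax}\le k_x\le\tau$, because $\ln\|A\|/\phi(\lambda)\to0$.

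\emph{The ``in particular'' clause.} Let $\tau_1<\tau_2$ both lie in $\Lambda(T)$ with $M_{\tau_1}$ and $M_{\tau_2}$ closed. The plan is to show that $M_{\tau_1}$ is a nontrivial hyperinvariant subspace.
\begin{itemize}
\item It is a closed subspace invariant under the commutant, by the two steps above.
\item It contains a vector $x$ with $k_x=\tau_1$, so $M_{\tau_1}\neq\{0\}$.
\item It is proper: choose $y$ with $k_y=\tau_2>\tau_1$; then $y\notin M_{\tau_1}$.
\end{itemize}
(We could equally argue with $M_{\tau_2}$, which contains that $x$ and need not be all of $X$.) The argument only needs one closed $M_\tau$ with $\tau$ lying strictly between the minimum and the maximum of $k$ over $\Lambda(T)$, or at an endpoint, as long as nontriviality is witnessed.

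\emph{Main obstacle.} The only delicate point is dividing by $\phi(\lambda)$, which requires $\phi(\lambda)>0$ and $\phi(\lambda)\to\infty$. This holds because $\|R(\lambda)\|\ge 1/\operatorname{dist}(\lambda,\sigma(T))=1/|\lambda|$. There is no genuine difficulty beyond this.
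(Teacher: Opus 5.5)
Your proof is correct and is the standard argument behind the result the paper cites from Douglas--Yang; the paper quotes it without proof. The steps are: $k_{x+y}\le\max\{k_x,k_y\}$ from the triangle inequality and $\ln 2/\ln\|(\lambda-T)^{-1}\|\to0$; invariance from $(\lambda-T)^{-1}A=A(\lambda-T)^{-1}$; and nontriviality of the closed level set $M_{\tau_1}$ at the \emph{smaller} of the two points.

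You should drop the parenthetical claim that you could ``equally argue with $M_{\tau_2}$'', and the muddled sentence after it. The level set at the larger point can be all of $X$: for example $\tau_2=1$, which always lies in $\Lambda(T)$ by Theorem~\ref{dlA}, gives $M_1=X$ because $\Lambda(T)\subset[0,1]$. In general, a closed $M_\tau$ with $\tau=\max\Lambda(T)$ never gives a proper subspace, so the argument must use the smaller point, as your main proof does.
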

There has been a series of works exploring
the quasinilpotent operator by the power set.
The interested readers can refer
\cite{HeZhu,HuJi,JiLiu,LY2018} for more details.
Comparing with the facts that $\sigma(A)$ is compact for every $A$ in $\B(\H)$ and each nonempty compact subset of $\mathbb{C}$ is the spectrum of some operators in $\B(\H)$.
Ji and Zhang \cite{JiZhang} proved that the power set $\Lambda(T)$ of a quasinilpotent operator $T$ on $\H$ must be right closed (that is, $\sup E \in \Lambda(T)$ for every nonempty subset $E$ of $\Lambda(T)$).
Furthermore,
they also showed that for each right closed subset $\sigma$ of $[0,1]$ containing $1$, there exists a quasinilpotent operator $T\in\B(\H)$ such that $\Lambda(T)=\sigma$.
However, this result requires the mild assumption that $1\in\sigma$.
So, one may naturally consider the following question.

\begin{ques}\label{Q1}
Whether or not $1$ is always in the power set of every quasinilpotent operator?
\end{ques}

Recall that an infinite-dimensional Banach space $X$ is said to be \emph{hereditarily indecomposable} (briefly HI) if every closed
infinite-dimensional subspace of $X$ cannot be written as the direct sum of two infinite-dimensional closed subspaces.
Such spaces were first exhibited by Gowers and Maurey in \cite{GM1993}.
One of the main properties of an HI space $X$ is that it has few operators,
namely, every $T\in\B(X)$ is a multiple of the identity plus a strictly singular operator, where an operator $T\in\B(X)$ is said to be strictly singular if there is no closed infinite-dimensional subspace $M\subset X$ such that $T|_{M}$ is one-to-one with closed range.
In this case, the spectrum of any operator $T\in\B(X)$ is countable with at most one limit point; see for instance Corollary 6.35 in \cite{BM2009}.
Therefore, it is not difficult to see that there are compact subsets in $\bC$ which are not the spectrum of any operator in HI space.
In contrast to this fact about the spectrum, the following question is meaningful.
\begin{ques}\label{Q2}
For each right closed subset $\sigma$ of $[0,1]$ containing $1$, is it true that $\Lambda(T)=\sigma$ for some quasinilpotent operator $T\in\B(X)$?
\end{ques}

In this paper, we will continue to investigate some fundamental properties of the
power set.
Our first main result is
\begin{thm}\label{dlA}
Let $T\in\B(X)$ be a quasinilpotent operator, then $1\in\Lambda(T)$.
\end{thm}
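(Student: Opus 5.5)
The plan is to construct by hand a single vector $x$ whose resolvent norms track $\|(\lambda-T)^{-1}\|$ along a sequence $\lambda_n\to 0$, up to a factor that is negligible on the logarithmic scale. We write $R(\lambda)=(\lambda-T)^{-1}$. Since $\sigma(T)=\{0\}$, $\|R(\lambda)\|\to+\infty$ as $\lambda\to 0$ (Remark (2)). By Remark (1) we already have $k_x\le 1$ for every $x\neq 0$, so it suffices to produce $x\neq0$ and $\lambda_n\to0$ with
$$\liminf_{n\to\infty}\frac{\ln\|R(\lambda_n)x\|}{\ln\|R(\lambda_n)\|}\ge 1 .$$

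The construction is inductive and produces $c_n>0$, $\lambda_n$, unit vectors $x_n$, and coefficients $t_n\in\{c_n,2c_n\}$; we set $y_n=\sum_{j\le n}t_jx_j$ and $x=\lim y_n$. At step $n$, $y_{n-1}$ and $\lambda_1,\dots,\lambda_{n-1}$ are already fixed.
\begin{itemize}
\item[(i)] Choose $c_n>0$ with $c_n\le 2^{-n}$ and $c_n\le 2^{-n}c_j/(100\,M_j)$ for all $j<n$, where $M_j=\|R(\lambda_j)\|$.
\item[(ii)] Choose $\lambda_n$ with $0<|\lambda_n|<1/n$ and $M_n=\|R(\lambda_n)\|\ge \max\{c_n^{-n},\,8^n\}$. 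This is possible because $\|R(\lambda)\|\to\infty$.
\item[(iii)] Pick a unit vector $x_n$ with $\|R(\lambda_n)x_n\|\ge M_n/2$.
\item[(iv)] The two vectors $R(\lambda_n)(y_{n-1}+c_nx_n)$ and $R(\lambda_n)(y_{n-1}+2c_nx_n)$ differ by $c_nR(\lambda_n)x_n$. Hence one of them has norm at least $c_nM_n/4$, and we take $t_n$ to be the corresponding coefficient.
\end{itemize}
The key point is step (iv): this cancellation-avoiding trick handles possible cancellation between the new term and the earlier partial sum without any geometric information about $X$.

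Next we check that the tail is harmless. The series converges because $\sum t_j\le\sum 2^{1-j}<\infty$, so $x$ is well defined. For a fixed $n$,
$$\Big\|R(\lambda_n)\sum_{m>n}t_mx_m\Big\|\le M_n\sum_{m>n}2c_m\le M_n\sum_{m>n}2\cdot 2^{-m}\frac{c_n}{100M_n}\le \frac{c_nM_n}{50}.$$
Therefore $\|R(\lambda_n)x\|\ge c_nM_n/4-c_nM_n/50\ge c_nM_n/8$. In particular $R(\lambda_1)x\neq 0$, so $x\neq0$.

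Finally, using $c_n\ge M_n^{-1/n}$ and $8\le M_n^{1/n}$, we get
$$\frac{\ln\|R(\lambda_n)x\|}{\ln M_n}\ge \frac{\ln M_n+\ln c_n-\ln 8}{\ln M_n}\ge 1-\frac{2}{n}\to 1 .$$
Hence $k_x\ge 1$, and so $k_x=1$ and $1\in\Lambda(T)$.

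The main obstacle is the cancellation between the new summand and the already-built partial sum. It is resolved by the two-choice trick in step (iv), together with the ordering of choices: $c_n$ is fixed before $\lambda_n$, so that $M_n$ can be taken to dominate any power of $1/c_n$.
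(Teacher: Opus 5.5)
Your proof is correct, and it takes a genuinely different route from the paper's. The paper argues by contradiction. Assuming $1\notin\Lambda(T)$, it invokes the right-closedness of $\Lambda(T)$ (Lemma~\ref{RClosed}, imported from another paper) to get a uniform exponent $q<1$ with $k_x<q$ for every $x\neq 0$. This hypothesis is what rules out cancellation: each finite partial sum $y_{n-1}$ then satisfies $\|R(\lambda_n)y_{n-1}\|<\|R(\lambda_n)\|^{q}$, which is negligible against $4^{-n}\|R(\lambda_n)\|$ once $\|R(\lambda_n)\|>n^n$. The weights there are simply $4^{-k}$, and the tail is absorbed because $\sum_{k>n}4^{-k}=\tfrac13 4^{-n}$. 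The nilpotent case is treated separately by citing Douglas--Yang. You instead neutralize cancellation directly with the two-choice trick in step (iv). You control the tail by making $c_m$ tiny relative to the earlier $c_j$, and by fixing $c_n$ before $\lambda_n$ so that $M_n$ dominates $c_n^{-n}$. What your version buys is that it is constructive and self-contained. It needs only $\|R(\lambda)\|\to\infty$, near-norming vectors and the triangle inequality, with no appeal to right-closedness, no contradiction, and no nilpotent/non-nilpotent split. The paper's version, in exchange, needs no coefficient choice at each step, because under its hypothesis the old partial sums cannot compete with the new term; it pays for this with a nontrivial external lemma. One minor remark: the factor $M_j$ in (i) is superfluous. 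The condition $c_m\le 2^{-m}c_j/100$ already gives your tail bound $c_nM_n/50$, and with your actual choice the middle expression in your chain equals $2^{-n}c_n/50$, which is much smaller than the bound you state.
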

Theorem \ref{dlA} provides a positive answer to Question \ref{Q1}.
Furthermore, to investigate Question \ref{Q2}, we introduce the following concept.
\begin{defn}\label{I:uniform}
Let $X$ be an infinite-dimensional Banach space.
We say that $X$ is of {\em uniform multiplicity infinity} if there exists a system of matrix units $\{u_{j,k}\}_{j,k=0}^{\infty}$ in $\B(X)$, where $\{u_{j,k}\}_{j,k=0}^{\infty}$ satisfied that
\begin{enumerate}
  \item [(a)]
  $\sup\{\|u_{j,k}\|:j,k\geq 0\}<\infty$;
  \item [(b)]
  $u_{j,k}u_{l,m}=\delta_{kl} u_{j,m}$ for any $j,k,l,m$, where
  $$\delta_{kl}=\begin{cases}0,&
  k\neq l;\\
  1, & k=l;\end{cases}$$
  \item [(c)]
  $x=\sum_{k=0}^{\infty}u_{k,k}x$ for every $x\in X$,
  with convergence in norm.
\end{enumerate}
\end{defn}
We give an affirmative answer to Question \ref{Q2} on a class of infinite-dimensional Banach spaces with uniform multiplicity infinity and establish the following result.

\begin{thm}\label{dlB}
Suppose that $X$ is separable and of uniform multiplicity infinity, and $\ran u_{0,0}$ has an unconditional basis and is infinite-dimensional.
If $\sigma\subset [0,1]$ is right closed and contains $1$, then there exists a quasinilpotent operator $T\in\B(X)$ such that $\Lambda(T)=\sigma$.
\end{thm}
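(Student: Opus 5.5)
The plan is to transplant the Hilbert space construction of Ji and Zhang \cite{JiZhang} to $X$. The matrix units $\{u_{j,k}\}$ will play the role of an orthonormal decomposition into infinitely many copies of $Y=\ran u_{0,0}$. The unconditional basis $\{e_n\}$ of $Y$ will supply uniformly bounded diagonal projections, which we use to split $X$ further into infinitely many pieces.

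\textbf{Step 1 (the model).} By (a), (b), (c) and the Banach--Steinhaus theorem, the partial sums $\sum_{k\le N}u_{k,k}$ are uniformly bounded. Fix a partition $\bN=\bigcup_{m\ge 0}N_m$ into infinite sets, and let $P_m$ be the basis projection of $Y$ onto $\overline{\operatorname{span}}\{e_n:n\in N_m\}$. By unconditionality, every sum $\sum_{m\in F}P_m$ has norm at most the unconditional constant $K$. Put
$$D_m=\sum_{k\ge 0}u_{k,0}P_mu_{0,k}.$$
These are commuting idempotents with $\sum_m D_m=I$ strongly, and every finite or infinite sub-sum is bounded uniformly. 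This is the point where unconditionality is used: it makes $X$ behave like an unconditional direct sum $\bigoplus_m X_m$, where $X_m=\ran D_m$.

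\textbf{Step 2 (the pieces).} Since $\sigma$ is right closed, we can choose a countable set $\{\tau_m\}_{m\ge 1}\subset\sigma$ such that every point of $\sigma$ is either some $\tau_m$ or the supremum of a subset of them. Set $\tau_0=1$. On each $X_m$ take a weighted shift across the copies,
$$T_m=\sum_{k\ge 0}w^{(m)}_k\,u_{k+1,k}D_m .$$
Its weights are summable, hence it is bounded, and the norms are uniformly controlled in $m$. The weights are chosen following the recipe of \cite{JiZhang}: a fast-decaying base sequence $w^{(0)}$, with $w^{(m)}$ a suitable power-type modification of it. The aim is that every $T_m$ is quasinilpotent and
$$\|(\lambda-T_m)^{-1}\|\approx\|(\lambda-T_0)^{-1}\|^{\tau_m}\quad\text{as }\lambda\to 0,$$
uniformly in $m$, and that every nonzero vector in $X_m$ has $k$-value $\tau_m$ relative to $\|(\lambda-T_0)^{-1}\|$. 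Since $T_m$ only shifts the copy index, the resolvent of $T_m$ is an explicit series in the $u_{j,k}$. Its norm estimates reduce, via (a), to scalar estimates on products of weights, exactly as in the Hilbert space case. Finally put
$$T=\sum_m T_m=\sum_k u_{k+1,k}\Bigl(\sum_m w^{(m)}_k D_m\Bigr),$$
which is bounded provided $\sup_m w^{(m)}_k$ is summable in $k$.

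\textbf{Step 3 (computing $\Lambda(T)$).} Let $x\neq 0$ and write $x=\sum_m D_mx$. The resolvent of $T$ satisfies $(\lambda-T)^{-1}D_m=(\lambda-T_m)^{-1}D_m$, so unconditionality of the decomposition gives
$$\sup_m\|(\lambda-T_m)^{-1}D_mx\|\lesssim\|(\lambda-T)^{-1}x\|\lesssim\sum_m\|(\lambda-T_m)^{-1}D_mx\|.$$
Together with $\|(\lambda-T)^{-1}\|\approx\|(\lambda-T_0)^{-1}\|$, this should yield $k_x=\sup\{\tau_m:D_mx\ne 0\}$. That value lies in $\sigma$ by right closedness. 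Conversely, each $\tau_m$ is attained by a vector of $X_m$, and each supremum of a subset of $\{\tau_m\}$ is attained by a suitable convergent combination of such vectors. Hence $\Lambda(T)=\sigma$. The value $1$ is consistent with Theorem~\ref{dlA}.

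\textbf{Main obstacle.} The hardest part is the upper bound in Step 3. The infinite sum $\sum_m$ must not inflate the exponent beyond the supremum. This requires estimates of the form $\|(\lambda-T_m)^{-1}D_mx\|\le C_m(x)\|(\lambda-T_0)^{-1}\|^{\tau_m+\epsilon}$ that hold uniformly enough in $m$ to be summed after weighting by $\|D_mx\|$. My first attempt would be to choose the weights so that the relevant constants are uniform in $m$, and to split the sum into finitely many $m$ plus a tail. The tail would then be controlled by the crude bound $\|(\lambda-T)^{-1}\|\,\|\sum_{m>M}D_mx\|$, with $M$ chosen depending on $\lambda$.
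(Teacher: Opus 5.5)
Your construction is, for the right weights, the paper's operator. Take $w^{(m)}_k=\tau_m^2/((2k+1)(2k+2))$. Then $T=\sum_k \frac{u_{k+1,0}Au_{0,k}}{(2k+1)(2k+2)}$ with $A=\sum_m\tau_m^2P_m$ a diagonal multiplier on $Y$, which is \eqref{T:Reality} (the paper uses singletons where you use infinite sets $N_m$). Written this way, $T$ is bounded without any reference to the $D_m$.

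However, the proposal never fixes the weights and never proves the asymptotics claimed in Step 2, and these are the substance of the theorem. The working mechanism is scaling, not powers. Since $\prod_{i<k}\frac{1}{(2i+1)(2i+2)}=\frac{1}{(2k)!}$, weights $r\,w_k$ give $\|T^k\|\approx r^k/(2k)!$ and $\ln\|(\lambda-T)^{-1}\|\sim\sqrt r\,|\lambda|^{-1/2}$, i.e.\ exponent $\sqrt r$. If ``power-type modification'' means $w^{(m)}_k=(w^{(0)}_k)^p$, it fails: then $\ln\|(\lambda-T_m)^{-1}\|\sim p\,|\lambda|^{-1/(2p)}$, whose ratio to $|\lambda|^{-1/2}$ tends to $0$ or $\infty$.

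The more serious gap is Step~1, on which Step~3 rests. Unconditionality of the basis of $Y=\ran u_{0,0}$ gives bounded multipliers on $Y$. It does not make $\sum_k u_{k,0}Bu_{0,k}$ bounded on $X$ for $B\in\B(Y)$, because $x=\sum_k u_{k,k}x$ is only a Schauder decomposition. Here is a counterexample. In $c$, take the summing basis $g_i=\mathbf{1}_{[i,\infty)}$, so $\|\sum a_ig_i\|=\sup_N|\sum_{i\le N}a_i|$, and let $u_{j,k}$ move the coefficients at $2k,2k+1$ to positions $2j,2j+1$. Then (a)--(c) hold with $\|u_{j,k}\|\le 2$. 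Let $P$ be the projection of $Y$ onto $\bC g_0$ along $g_1$. Then $\sum_k u_{k,0}Pu_{0,k}$ sends $\sum_{i<2L}(-1)^ig_i$ (norm $1$) to $\sum_{k<L}g_{2k}$ (norm $L$). Adding an $\ell^2$-sum of copies of $\ell^2$ with its standard matrix units, as in Lemma~\ref{S3:Duni}, makes $\ran w_{0,0}$ infinite-dimensional with an unconditional basis. So under the theorem's hypotheses your $D_m$ can be unbounded, and neither $x=\sum_mD_mx$ nor the two-sided inequality of Step~3 is available.

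Even granting Step~1, the upper bound is left open, and the tail splitting only moves the problem. The tail must be $\le\|(\lambda-T)^{-1}\|^{s+\epsilon-1}$, which forces $M=M(\lambda)\to\infty$ at an uncontrolled rate, so you again need estimates uniform and summable in $m$. Similarly, ``every nonzero vector of $X_m$ has $k$-value $\tau_m$'' is asserted without dealing with cancellation among infinitely many components.

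The missing idea is to use unconditionality only inside $Y$, and the triangle inequality only along the shift index, where the factorial weights make everything summable. If every $u_{0,n}x$ lies in $\overline{\operatorname{span}}\{e^{(0)}_j:r_j\le l\}$, then $\|T^kx\|\le KM^2l^k\|x\|\sum_n\frac{(2n)!}{(2n+2k)!}\le C\,l^k\|x\|/(2k)!$. This gives $k_x\le\sqrt l$ directly, with no decomposition into pieces. The reverse inequality comes from applying functionals built from $f^{(p)}_j=\phi_j\circ u_{0,p}$ to $(\lambda-T)^{-1}x$ and averaging over $|\lambda|=t$ to isolate the first nonzero coefficient (Proposition~\ref{T:subsetr}). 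Finally, $\sigma\subset\Lambda(T)$ follows from $k_T(e^{(m)}_j)=\sqrt{r_j}$ and right-closedness of $\Lambda(T)$ (Lemma~\ref{RClosed}), so no combination vectors need to be built.
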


The paper is organized as follows.
In Section 2, we prove that the power set $\Lambda(T)$ always contains $1$ for each quasinilpotent operator $T\in\B(X)$.
In Section 3, we study Banach spaces with uniform multiplicity infinity and show that some classical Banach spaces have this property.
In Section 4, we prove that if $\sigma\subset [0,1]$ is right closed and contains $1$, then there exists a quasinilpotent operator $T$ on a class of Banach spaces with uniform multiplicity infinity such that $\Lambda(T)=\sigma$.

\section{Proof of Theorem \ref{dlA}}
The main goal of this section is to present the proof of Theorem \ref{dlA}.
\begin{lem}[see \cite{HuJi2025}]\label{RClosed}
Suppose that $T\in\mathcal{B}(X)$ is quasinilpotent, then $\Lambda(T)$ is right closed.
\end{lem}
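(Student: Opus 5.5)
Given a nonempty $E\subset\Lambda(T)$, put $s=\sup E$. If $s\in E$ there is nothing to prove, so assume $s\notin E$. Then we can pick unit vectors $x_n$ with $k_n:=k_{x_n}$ strictly increasing to $s$. The plan is to build $x=\sum_n c_n x_n$, with positive coefficients $c_n$ decreasing very fast, such that $k_x=s$. Write $R(\lambda)=(\lambda-T)^{-1}$; recall that $\|R(\lambda)\|\to\infty$ as $\lambda\to0$ and that $\|R(\lambda)\|$ is bounded on $\{|\lambda|\ge\delta\}$ for each $\delta>0$. The $c_n$, together with auxiliary radii $\delta_n\downarrow0$ and points $\lambda_n\to0$, are chosen by one induction. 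At step $n$, $c_n$ must be small relative to every quantity fixed at earlier steps.

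\emph{Upper bound $k_x\le s$.} Choose $\delta_n$ so small that for $|\lambda|<\delta_n$:
\begin{itemize}
\item $\|R(\lambda)x_j\|\le\|R(\lambda)\|^{s+1/n}$ for all $j\le n$, which is possible since $k_j<s$;
\item $\|R(\lambda)\|^{1/n}\ge n$.
\end{itemize}
Let $\rho_n=\sup_{|\lambda|\ge\delta_n}\|R(\lambda)\|$ and require $c_j\rho_j\le 2^{-j}$. Now take $\delta_{n+1}\le|\lambda|<\delta_n$. The terms with $j\le n$ contribute at most $n\|R(\lambda)\|^{s+1/n}$, using $c_j\le1$. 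For $j\ge n+1$ we have $\|R(\lambda)\|\le\rho_{n+1}\le\rho_j$, so those terms contribute at most $\sum_j 2^{-j}\le1$. Hence
\[
\|R(\lambda)x\|\le \|R(\lambda)\|^{s+2/n}+1 .
\]
Taking logarithms and letting $n\to\infty$ gives $k_x\le s$.

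\emph{Lower bound $k_x\ge k_m$ for every $m$.} This is the main obstacle, because the partial sums could cancel. Handle it in two steps.
\begin{itemize}
\item By Theorem~\ref{pthy}, the sets $M_\tau$ are linear subspaces. So the finite sum $y_{m-1}=\sum_{j<m}c_jx_j$ satisfies $k_{y_{m-1}}\le k_{m-1}<k_m$.
\item After fixing $c_1,\dots,c_m$, pick $\lambda_m$ with $|\lambda_m|<\min(\delta_m,|\lambda_{m-1}|/2)$ such that $\|R(\lambda_m)x_m\|\ge\|R(\lambda_m)\|^{k_m-1/m}$. Pick it so small that $\|R(\lambda_m)y_{m-1}\|\le\|R(\lambda_m)\|^{(k_{m-1}+k_m)/2}$, and that this is at most $\tfrac14 c_m\|R(\lambda_m)\|^{k_m-1/m}$ (possible for $m$ large, as the exponent gap dominates the fixed constant $c_m$).
\end{itemize}
Then, when choosing the later coefficients $c_j$ with $j>m$, additionally require $c_j\|R(\lambda_i)\|\le 2^{-j}\cdot\tfrac14 c_i$ for all $i<j$. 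This keeps the tail at $\lambda_m$ below $\tfrac14 c_m\le\tfrac14 c_m\|R(\lambda_m)\|^{k_m-1/m}$ once $\|R(\lambda_m)\|\ge1$. By the triangle inequality,
\[
\|R(\lambda_m)x\|\ge\tfrac12 c_m\|R(\lambda_m)\|^{k_m-1/m}.
\]
Since $c_m$ is fixed while $\|R(\lambda)\|\to\infty$, this forces $k_x\ge k_m-1/m$ along the sequence $\lambda\to0$ arising from the subsequent $\lambda$'s at level $m$. More carefully, re-select infinitely many such points for each $m$ by diagonal bookkeeping, or simply use $\lambda_n$ for $n\ge m$ together with $k_n\ge k_m$. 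Either way $k_x\ge s$.

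All constraints on $c_n$ are upper bounds depending only on earlier data, so the induction is consistent and the series converges in norm. We conclude $k_x=s$, so $s\in\Lambda(T)$, which is right closedness. The delicate point to check is the ordering of choices: $\lambda_m$ is chosen after $c_m$, and all later $c_j$ are chosen after $\lambda_m$.
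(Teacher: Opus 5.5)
Your upper bound $k_x\le s$ is correct: the annuli $\delta_{n+1}\le|\lambda|<\delta_n$ together with $c_j\rho_j\le 2^{-j}$ do the job. The order of choices and the tail condition $c_j\|R(\lambda_i)\|\le 2^{-j}\cdot\frac14 c_i$ are also fine. The paper does not prove this lemma itself (it cites \cite{HuJi2025}), but your lower-bound mechanism is the one it uses for Theorem~\ref{dlA}.

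\textbf{The failing step: the choice of $\lambda_m$.} You require $\|R(\lambda_m)\|^{(k_{m-1}+k_m)/2}\le\frac14 c_m\|R(\lambda_m)\|^{k_m-1/m}$. This is equivalent to $\|R(\lambda_m)\|^{(k_m-k_{m-1})/2-1/m}\ge 4/c_m>1$, so it needs $k_m-k_{m-1}>2/m$. That is not ``possible for $m$ large''. Since $\sum_m(k_m-k_{m-1})\le 1$ while $\sum_m 2/m=\infty$, it fails for infinitely many $m$ for every admissible sequence. For example, with $k_m=s(1-m^{-2})$ it fails for every $m\ge 2$.

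\textbf{The passage to $k_x\ge s$ is also unjustified.} At level $m$ there is only the single point $\lambda_m$, so the fact that $c_m$ is fixed gives nothing as $\lambda\to0$. The only usable sequence is the diagonal one, $\lambda_m$ with $m\to\infty$. Along it you need $\ln(1/c_m)=o(\ln\|R(\lambda_m)\|)$. You never impose this, even though $c_m$ decays extremely fast, being bounded by $2^{-m}/\rho_m$ and by $c_i/\|R(\lambda_i)\|$.

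\textbf{The fix.} Tie the tolerance to the gap. For $m\ge2$ put $\epsilon_m=(k_m-k_{m-1})/4$. After $c_1,\dots,c_m$ are fixed, choose $\lambda_m$ with $|\lambda_m|<|\lambda_{m-1}|/2$ satisfying three conditions. First, $\|R(\lambda_m)x_m\|\ge\|R(\lambda_m)\|^{k_m-\epsilon_m}$, which is possible because $k_m$ is a $\limsup$. Second, $\|R(\lambda_m)y_{m-1}\|\le\|R(\lambda_m)\|^{k_m-2\epsilon_m}$, which holds for all small $\lambda$ because $k_{y_{m-1}}\le k_{m-1}<k_m-2\epsilon_m$. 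Third, $\|R(\lambda_m)\|^{\epsilon_m}\ge 4/c_m$. Together with your tail condition this gives $\|R(\lambda_m)x\|\ge\frac12 c_m\|R(\lambda_m)\|^{k_m-\epsilon_m}\ge 2\|R(\lambda_m)\|^{(k_{m-1}+k_m)/2}$. Hence $k_x\ge\lim_m (k_{m-1}+k_m)/2=s$, and the rest of your argument stands.

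This is exactly where your situation differs from the paper's proof of Theorem~\ref{dlA}. There the competing exponent $q$ stays a fixed distance below the target exponent $1$. So the partial sums and the coefficients $4^{-n}$ are absorbed using only $\|R(\lambda_n)\|>n^n$. Here the gaps $k_m-k_{m-1}$ shrink to $0$, and the tolerance must shrink with them.
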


\begin{proof}[Proof of Theorem~\ref{dlA}]
If $T$ is nilpotent with $T^n=0$ but $T^{n-1}\neq0$, then $\Lambda(T)=\{\frac{j}{n}:j=1,2,\cdots,n\}$ (see \cite[Example 6.1]{DY3}).
So, $1\in \Lambda(T)$.
Now, let $T\in\B(X)$ be a quasinilpotent but not nilpotent operator.
Assume that $1\notin \Lambda(T)$.
By Lemma \ref{RClosed}, we have $\Lambda(T)$ is right closed.
So, there exists a positive number $c$ with $0<c<1$ such that $\Lambda(T)\subset [0,c]$.
Let $q=\frac{c+1}{2}<1$, then $k_x<q$ for any $x\in X$.
Next, we will construct a non-zero vector $y\in X$ such that $k_y=1$ to derive a contradiction.

\begin{itemize}
  \item [\textbf{STEP 1.}] Finding the non-zero vector $y\in X$ by induction.
\end{itemize}
\begin{enumerate}
  \item [(1)] Induction part $1$.

  Suppose that $e_1\in X$ and $\|e_1\|=1$, then $k_{e_1}<q$.
  So there exists $r_1$ in $(0,1)$ such that
  $$\|(\lambda-T)^{-1}e_1\|<\|(\lambda-T)^{-1}\|^q, \ \  \|(\lambda-T)^{-1}\|>2^2, \ \ \forall |\lambda|\leq r_1.$$
  Set $s_1=\frac{1}{4}$ and $x_1=s_1e_1$.
  Then
  $$\|(\lambda-T)^{-1}x_1\|=s_1\|(\lambda-T)^{-1}e_1\|<\|(\lambda-T)^{-1}\|^q,\ \ \forall |\lambda|\leq r_1.$$
  For each $\lambda\neq 0$, since
  $$\|(\lambda-T)^{-1}\|=\sup\{\|(\lambda-T)^{-1}x\|: x\in X,\|x\|=1\},$$
  we can pick $e=e(\lambda)\in X$ with $\|e\|=1$ so that
  $$\|(\lambda-T)^{-1}e\|\geq (1-\frac{1}{2!4^2})\|(\lambda-T)^{-1}\|.$$
  So one can find $\lambda_2$ with $0<|\lambda_2|<r_1$ such that
  $$\|(\lambda_2-T)^{-1}\|=\max\{\|(\lambda-T)^{-1}\|:|\lambda|=|\lambda_2|\}$$
  and pick $e_2\in X$ with $\|e_2\|=1$ such that
  $$\|(\lambda_2-T)^{-1}e_2\|\geq(1-\frac{1}{2!4^2})\|(\lambda_2-T)^{-1}\|.$$

  \item [(2)] Induction part $2$.

  Let $s_2=\frac{1}{4^2}$ and $x_2=s_2e_2$.
  Set $y_2=x_1+x_2$, then $y_2\neq 0$ and $k_{y_2}<q$.
  So there exists $r_2\in(0,1)$ and $0<r_2<|\lambda_2|$ such that
  $$\|(\lambda-T)^{-1}y_2\|<\|(\lambda-T)^{-1}\|^q,\ \  \|(\lambda-T)^{-1}\|>3^3,\ \
  \forall |\lambda|\leq r_2.$$
  For each $\lambda\neq 0$, there exists $e=e(\lambda)\in X$ and $\|e\|=1$ such that
  $$\|(\lambda-T)^{-1}e\|\geq (1-\frac{1}{3!4^3})\|(\lambda-T)^{-1}\|.$$
  So, one can find $\lambda_3$ with $0<|\lambda_3|<r_2$ such that
  $$\|(\lambda_3-T)^{-1}\|=\max\{\|(\lambda-T)^{-1}\|:|\lambda|=|\lambda_3|\}$$
  and pick $e_3\in X$ with $\|e_3\|=1$ such that
  $$\|(\lambda_3-T)^{-1}e_3\|\geq(1-\frac{1}{3!4^3})\|(\lambda_3-T)^{-1}\|.$$

  \item [(3)] Induction part $3$.

  Let $s_3=\frac{1}{4^3}$ and $x_3=s_3e_3$.
  Set $y_3=\sum_{i=1}^{3}x_i$, then $y_3\neq 0$ and $k_{y_3}<q$.
  So there exist $r_3\in(0,1)$ and $0<r_3<|\lambda_3|$ such that
  $$\|(\lambda-T)^{-1}y_3\|<\|(\lambda-T)^{-1}\|^q, \ \
  \|(\lambda-T)^{-1}\|>4^4, \ \
  \forall |\lambda|\leq r_3.$$
  For each $\lambda\neq 0$, there exists $e=e(\lambda)\in X$ and $\|e\|=1$ such that
  $$\|(\lambda-T)^{-1}e\|\geq (1-\frac{1}{4!4^4})\|(\lambda-T)^{-1}\|.$$
  So, one can find $\lambda_4$ with $0<|\lambda_4|<r_3$ such that
  $$\|(\lambda_4-T)^{-1}\|=\max\{\|(\lambda-T)^{-1}\|:|\lambda|=|\lambda_4|\}$$
  and pick $e_{4}\in X$ with $\|e_4\|=1$ such that
  $$\|(\lambda_4-T)^{-1}e_4\|\geq(1-\frac{1}{4!4^4})\|(\lambda_4-T)^{-1}\|.$$

  \item [(4)]
  Suppose that we have completed the Induction part $j$, $1 \leq j \leq k$. We will next get the Induction part $(k+1)$.

  Let $s_k=\frac{1}{4^k}$ and $x_k=s_ke_k$.
  Set $y_k=\sum_{i=1}^{k}x_i$, then $k_{y_k}\neq 0$ and $k_{y_k}<q$.
  So there exist $r_k\in(0,1)$ and $0<r_k<|\lambda_k|$ such that
  $$\|(\lambda-T)^{-1}y_k\|<\|(\lambda-T)^{-1}\|^q,\ \ \|(\lambda-T)^{-1}\|>(k+1)^{k+1},
  \ \ \forall |\lambda|\leq r_{k}.$$
  For each $\lambda\neq 0$, there exists $e=e(\lambda)\in X$ and $\|e\|=1$ such that
  $$\|(\lambda-T)^{-1}e\|\geq (1-\frac{1}{(k+1)!4^{k+1}})\|(\lambda-T)^{-1}\|,\ \ \forall \lambda\neq 0.$$
  So, one can find $\lambda_{k+1}$ with $0<|\lambda_{k+1}|<r_k$ such that
  $$\|(\lambda_{k+1}-T)^{-1}\|=\max\{\|(\lambda-T)^{-1}\|:|\lambda|=|\lambda_{k+1}|\}$$
  and pick $e_{k+1}\in X$ with $\|e_{k+1}\|=1$ such that
  $$\|(\lambda_{k+1}-T)^{-1}e_{k+1}\|\geq(1-\frac{1}{(k+1)!4^{k+1}})\|(\lambda_{k+1}-T)^{-1}\|.$$

  \item [(5)]
  By induction, we can find $\{r_k\}_{k=1}^{\infty}$, $\{\lambda_k\}_{k=2}^{\infty}$, $\{e_k\}_{k=1}^{\infty}$ ,$\{x_k\}_{k=1}^{\infty}$ and $\{y_k\}_{k=1}^{\infty}$ such that
\begin{itemize}
  \item [(a)]
  $1>r_1>|\lambda_2|>r_2>|\lambda_3|>r_3>\cdots>r_k>|\lambda_{k+1}|>\cdots>0$;
  \item [(b)]
  $\|(\lambda_k-T)^{-1}\|=\max\{\|(\lambda-T)^{-1}\|:|\lambda|=\lambda_k\}$;
  \item [(c)]
  $\|(\lambda_k-T)^{-1}e_k\|\geq(1-\frac{1}{k!k^k})\|(\lambda_k-T)^{-1}\|$;
  \item [(d)]
  $s_k=\frac{1}{4^k}$, $x_k=s_k e_k$ and $y_k=\sum_{i=1}^{k}x_i$ for
  $k\geq 1$;
  \item [(f)]
  $\|(\lambda-T)^{-1}y_k\|<\|(\lambda-T)^{-1}\|^q$ and  $\|(\lambda-T)^{-1}\|>(k+1)^{k+1}$ for any $|\lambda|\leq r_k$.
\end{itemize}
\end{enumerate}

\begin{itemize}
  \item [\textbf{STEP 2.}]
\end{itemize}
Notice that,
$$\frac{1}{6}=\frac{1}{4}-\sum_{k=2}^{\infty}\frac{1}{4^k}=\|x_1\|-\sum_{k=2}^{\infty}\|x_k\|\leq\|\sum_{k=1}^{\infty}x_k\|\leq\sum_{k=1}^{\infty}\|x_k\|=\sum_{k=1}^{\infty}\frac{1}{4^k}=\frac{1}{3}.$$
It follows that $y:=\sum_{k=1}^{\infty}x_k\in X$ with $\|y\|\in (\frac{1}{6},\frac{1}{3})$.
We next will show that $k_y=1$.
Since $\|(\lambda_n-T)^{-1}\|>1$, we have $\ln\|(\lambda_n-T)^{-1}\|>0$ for every $n$.
Then
\begin{align*}
&\frac{\ln\|(\lambda_n-T)^{-1}y\|}{\ln\|(\lambda_n-T)^{-1}\|}
=
\frac{\ln\|\sum\limits_{k=1}^{\infty}(\lambda_n-T)^{-1}x_k\|}{\ln\|(\lambda_n-T)^{-1}\|}\\
&\geq
\frac{\ln\left[\|(\lambda_n-T)^{-1}x_n\|-\left(\|(\lambda_n-T)^{-1}\sum\limits_{k=1}^{n-1}x_k\|+\sum\limits_{k=n+1}^{\infty}\|(\lambda_n-T)^{-1}x_k\|\right)\right]}
{\ln\|(\lambda_n-T)^{-1}\|}\\
&\geq
\frac{\ln\left[\|(\lambda_n-T)^{-1}x_n\|-\left(\|(\lambda_n-T)^{-1}\sum\limits_{k=1}^{n-1}x_k\|+\|(\lambda_n-T)^{-1}\|\sum\limits_{k=n+1}^{\infty}\frac{1}{4^k}\right)\right]}
{\ln\|(\lambda_n-T)^{-1}\|}\\
&=
\frac{\ln\left[\|(\lambda_n-T)^{-1}x_n\|-\left(\|(\lambda_n-T)^{-1}y_{n-1}\|+
\frac{1}{3\cdot4^n}\|(\lambda_n-T)^{-1}\|\right)\right]}
{\ln\|(\lambda_n-T)^{-1}\|}\\
&\geq
\frac{\ln\left[\|(\lambda_n-T)^{-1}x_n\|-\left(\|(\lambda_n-T)^{-1}\|^q+\frac{1}{3\cdot4^n}\|(\lambda_n-T)^{-1}\|\right)\right]}
{\ln\|(\lambda_n-T)^{-1}\|}\\
&=
\frac{\ln \left[s_n\|(\lambda_n-T)^{-1}e_n\|-\left(\|(\lambda_n-T)^{-1}\|^q+\frac{1}{3\cdot4^n}\|(\lambda_n-T)^{-1}\|\right)\right]}
  {\ln\|(\lambda_n-T)^{-1}\|}\\
&\geq
\frac{\ln \left[s_n(1-\frac{1}{n!4^n})\|(\lambda_n-T)^{-1}\|-\left(\|(\lambda_n-T)^{-1}\|^q+\frac{1}{3\cdot4^n}\|(\lambda_n-T)^{-1}\|\right)\right]}
{\ln\|(\lambda_n-T)^{-1}\|}\\
&=
\frac{\ln \left[\|(\lambda_n-T)^{-1}\|\left(s_n(1-\frac{1}{n!4^n})-\|(\lambda_n-T)^{-1}\|^{q-1}-\frac{1}{3\cdot4^n}\right)\right]}
{\ln\|(\lambda_n-T)^{-1}\|}\\
&=
\frac{\ln \left[\|(\lambda_n-T)^{-1}\|+\ln\left(s_n(1-\frac{1}{n!4^n})-\|(\lambda_n-T)^{-1}\|^{q-1}-\frac{1}{3\cdot4^n}\right)\right]}
{\ln\|(\lambda_n-T)^{-1}\|}\\
&=
1+\frac{\ln\left[\frac{1}{4^n}(1-\frac{1}{n!4^n}-\frac{1}{3})-\|(\lambda_n-T)^{-1}\|^{q-1}\right]}
{\ln\|(\lambda_n-T)^{-1}\|}\\
&=1+\frac{\ln\left[\frac{1}{4^n}\left((1-\frac{1}{n!4^n}-\frac{1}{3})-4^n\|(\lambda_n-T)^{-1}\|^{q-1}\right)\right]}
{\ln\|(\lambda_n-T)^{-1}\|}\\
&=1+\frac{\ln\frac{1}{4^n}+\ln\left[(1-\frac{1}{n!4^n}-\frac{1}{3})-4^n\|(\lambda_n-T)^{-1}\|^{q-1}\right]}
{\ln\|(\lambda_n-T)^{-1}\|}.
\end{align*}

Since $\|(\lambda_n-T)^{-1}\|>n^n$ and $q-1<0$, we have
$4^n\|(\lambda_n-T)^{-1}\|^{q-1}=0$ when $n\rightarrow \infty$.
So, $k_y\geq1$. And hence $k_y=1$.
\end{proof}

\section{Banach spaces with uniform multiplicity infinity}
In this section, we will discuss some basic properties of an infinite-dimensional Banach space with uniform multiplicity infinity.

Let $\mathcal{A}\subset \B(X)$ be an algebra, denote
$\mathcal{A}^{\prime}:=\{T\in\B(X):ST=TS
~~\text{for all}~~S\in \mathcal{A}\}$.
Let $n\in \mathbb{N}$ or $n=\aleph_0$, the countable cardinal number.
Recall that an algebra $\mathcal{A}\subset \B(X)$ has \emph{uniform multiplicity $n$} if $\mathcal{A}^{\prime}$ contains a system of matrix units $\{u_{i,j}\}_{i,j=0}^{n-1}$, where $\{u_{i,j}\}_{i,j=0}^{n-1}$ satisfied that
\begin{enumerate}
  \item [(1)]
  $\sup\{\|u_{j,k}\|:0\leq j,k<n\}<\infty$;
  \item [(2)]
  $u_{j,k}u_{l,m}=\delta_{kl} u_{j,m}$ for any $j,k,l,m$, where
  $$\delta_{kl}=\begin{cases}0, &
  k\neq l;\\
  1, & k=l;\end{cases}$$
  \item [(3)]
  $x=\sum_{k=0}^{n-1}u_{k,k}x$ for every $x\in X$, with convergence in norm.
\end{enumerate}

It is well known that the structure of an abelian von Neumann algebra can be described by uniform multiplicity theory
(see \cite[Theorem II.3.4]{{Davidson1996}}).
Inspired by this, we introduce the concept that an infinite-dimensional Banach space $X$ has uniform multiplicity infinity, which is presented in Definition \ref{I:uniform}.
Moreover, if $\B(X)$ contains a system of matrix units $\{u_{i,j}\}_{i,j=0}^{\infty}$ and $\dim \ran u_{0,0}=n<\infty$,
then we called $X$ has \emph{uniform multiplicity infinity of finite index $n$}.

Recall that a non-zero sequence $\{x_n\}_{n=0}^{\infty}$ in a Banach space $X$ is called a \emph{(Schauder) basis }if, for each $x\in X$, there is a unique
sequence of scalars $\{a_n\}_{n=0}^{\infty}$
such that $x=\sum_{n=0}^{\infty}a_nx_n$, where the series converges in norm to $x$.

\begin{lem}[see {\cite[Theorem 3.1]{CNL2005}}]
Let $\{x_n\}_{n=0}^{\infty}$ be a basis for $X$.
Set $P_n: X\rightarrow X$,
$P_n(\sum_{k=0}^{\infty}a_k x_k)=\sum_{k=0}^{n}a_k x_k$.
Then every $P_n$ is continuous, and $K=\sup_n\|P_n\|<\infty$.
\end{lem}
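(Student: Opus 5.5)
The claim is the standard uniform boundedness of basis projections, so the plan is the usual renorming argument. I will introduce on $X$ the new norm
$$|||x|||=\sup_{n\ge 0}\Big\|\sum_{k=0}^{n}a_kx_k\Big\|,\qquad x=\sum_{k=0}^{\infty}a_kx_k,$$
which is well defined because the expansion is unique. The supremum is finite because the partial sums converge to $x$ and hence are bounded. Since the partial sums tend to $x$, we get $\|x\|\le|||x|||$. Once I know that $(X,|||\cdot|||)$ is complete, the inverse mapping theorem applied to the identity map $(X,|||\cdot|||)\to(X,\|\cdot\|)$ gives a constant $K$ with $|||x|||\le K\|x\|$. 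This is exactly $\|P_nx\|\le K\|x\|$ for all $n$, so each $P_n$ is continuous (it is linear by uniqueness of coefficients) and $\sup_n\|P_n\|\le K$.

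The first step is a coefficient estimate. For a fixed $k$, the coefficient map satisfies
$$|a_k|\,\|x_k\|=\Big\|\sum_{j\le k}a_jx_j-\sum_{j\le k-1}a_jx_j\Big\|\le 2|||x|||.$$
Since $x_k\neq 0$, the coefficient functionals are $|||\cdot|||$-bounded.

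The main step, which I expect to be the only real obstacle, is completeness of $|||\cdot|||$. I will take a $|||\cdot|||$-Cauchy sequence $(y^{(m)})$ with coefficients $a^{(m)}_k$.
\begin{enumerate}
\item By the coefficient estimate, each $a^{(m)}_k$ converges to some $a_k$.
\item The partial-sum vectors $S_n^{(m)}=\sum_{k\le n}a^{(m)}_kx_k$ are Cauchy in $m$ uniformly in $n$. They therefore converge, uniformly in $n$, to $S_n=\sum_{k\le n}a_kx_k$.
\item Using a $3\varepsilon$ argument (fix $m$ large, then use convergence of $S_n^{(m)}$ in $n$), I will show that $(S_n)$ is $\|\cdot\|$-Cauchy. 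Hence $S_n$ converges in the complete space $X$ to some $y$, and $y=\sum a_kx_k$ by uniqueness.
\item Finally, $|||y^{(m)}-y|||=\sup_n\|S_n^{(m)}-S_n\|\to0$, which completes the argument.
\end{enumerate}
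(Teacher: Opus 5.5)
Your proof is correct and is essentially the argument behind the paper's citation. The paper gives no proof of its own and refers to the standard textbook proof: renorm by $|||x|||=\sup_n\|P_nx\|$, check completeness, and apply the inverse mapping theorem. Its later remark that one can renorm to get basis constant $1$ comes from exactly this norm, and every step of your completeness argument goes through as you describe.
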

In this case, the number $K=\sup_n\|P_n\|$ is called the {\em basis constant} of the basis $\{x_n\}_{n=0}^{\infty}$.
In fact, it follows from the proof of
Theorem 3.1 in \cite{CNL2005} that any Banach space with a basis can always be given an equivalent norm under which the basis constant is $1$.

Next, we will establish some fundamental properties of a Banach space with uniform multiplicity infinity.

\begin{prop}\label{B:uniP1}
Let $X$ be an infinite-dimensional Banach space.
Then $X$ has uniform multiplicity infinity of index $1$ if and only if $X$ has a basis.
\end{prop}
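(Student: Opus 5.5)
For the forward direction, assume $\{u_{j,k}\}_{j,k=0}^\infty$ is a system of matrix units in $\B(X)$ with $\dim\ran u_{0,0}=1$. Fix a nonzero $x_0\in\ran u_{0,0}$ and set $x_n=u_{n,0}x_0$ for $n\ge 0$. By (b), $u_{0,n}x_n=u_{0,0}x_0=x_0$, so each $x_n\neq 0$. The map $u_{n,0}$ sends $\ran u_{0,0}$ onto $\ran u_{n,n}$: indeed $u_{n,n}=u_{n,0}u_{0,n}$ and $u_{0,n}$ maps into $\ran u_{0,0}$. Hence $\ran u_{n,n}=\bC x_n$.

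For $x\in X$, write $u_{n,n}x=a_nx_n$. By (c), $x=\sum_n a_nx_n$ in norm, which gives existence of the expansion. For uniqueness, suppose $\sum_n b_nx_n=0$ converges in norm. Apply the bounded operator $u_{m,m}$ and use $u_{m,m}x_n=u_{m,m}u_{n,0}x_0=\delta_{mn}x_n$. This yields $b_mx_m=0$, so $b_m=0$. Thus $\{x_n\}$ is a Schauder basis. Condition (a) is not even needed for this direction.

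For the converse, let $\{x_n\}_{n=0}^\infty$ be a basis with coordinate functionals $x_n^*$. These are continuous, since $x_n^*(x)x_n=P_nx-P_{n-1}x$, so $\|x_n^*\|\le 2K/\|x_n\|$. After normalizing so that $\|x_n\|=1$, we get $\|x_n^*\|\le 2K$. The natural choice is $u_{j,k}x=x_k^*(x)x_j$, a rank-one operator. Then $u_{j,k}u_{l,m}x=x_m^*(x)x_k^*(x_l)x_j=\delta_{kl}u_{j,m}x$, which is (b). Condition (c) is exactly the basis expansion $x=\sum_k x_k^*(x)x_k$. Condition (a) follows from $\|u_{j,k}\|=\|x_k^*\|\|x_j\|\le 2K$. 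Finally, $\ran u_{0,0}=\bC x_0$ is one-dimensional, so the index is $1$.

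The only point needing care is the uniform bound (a) in the converse. This is why I normalize the basis and invoke the basis-constant lemma \cite[Theorem 3.1]{CNL2005} to control $\|x_k^*\|$ uniformly. Everything else is direct algebra with the matrix unit relations.
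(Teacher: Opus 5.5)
Your proposal is correct and follows essentially the same route as the paper. In the forward direction you set $x_n=u_{n,0}x_0$, identify $\ran u_{n,n}=\bC x_n$, and get uniqueness by applying $u_{m,m}$; in the converse you use the rank-one operators $u_{j,k}=x_k^*(\cdot)\,x_j$ built from a normalized basis and bound them by twice the basis constant. Your explicit check that $x_n\neq 0$ via $u_{0,n}x_n=x_0$ is a small detail the paper leaves implicit.
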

\begin{proof}
``$\Rightarrow$".
Suppose that $\{u_{i,j}\}_{i,j=0}^{\infty}$ is a system of matrix units of $\B(X)$ and $\dim \ran u_{0,0}=1$.
Since $\dim \ran u_{0,0}=1$, there exists a $x_0\in\ran u_{0,0}$ such that $\|x_0\|=1$ and $\ran u_{0,0}=\mathbb{C}x_0$.
For every $j\geq 0$, let $x_j=u_{j,0}x_0$.
Since $u_{j,j}=u_{j,0}u_{0,0}u_{0,j}$ and $u_{0,0}x_0=x_0$, we have
$x_j\in \ran u_{j,j}$ and
$u_{j,j} x_j=u_{j,j}u_{j,0}x_0=u_{j,0}x_0=x_j$.
Hence, $\ran u_{j,j}=\mathbb{C}x_j$.
Moreover,
since $x=\sum_{k=0}^{\infty} u_{k,k}x$ for any $x\in X$ and
$u_{k,k}x\in\ran u_{k,k}=\mathbb{C}x_k$ for each $k\geq 0$,
there is a scalar $a_k$ such that $u_{k,k}x=a_kx_k$.
Thus, $x=\sum_{k=0}^{\infty}a_kx_k$, where the series converges in norm to $x$.
In particular, if $x=\sum_{k=0}^{\infty}b_kx_k$, then
$$u_{j,j}x=u_{j,j}\sum\limits_{k=0}^{\infty}b_kx_k=\sum\limits_{k=0}^{\infty}b_k u_{j,j}u_{k,0}x_0=b_j u_{j,0}x_0=b_jx_j.$$
Since $u_{j,j}x=a_jx_j$, we have $a_j=b_j$ for every $j$.
Therefore, this representation is unique.
In summary, $X$ has a basis.

``$\Leftarrow$".
Let $\{e_n\}_{n=0}^{\infty}$ be a basis for $X$ and $\|e_n\|=1$ for each
$n$.
For $x=\sum_{n=0}^{\infty}\alpha_n e_n\in X$,
let $f_n(x)=\alpha_{n}$. Then $f_n\in X^\prime$, where $X^\prime$ denote the dual of $X$.
In fact,
if we suppose that basis constant of the basis $\{e_n\}_{n=0}^{\infty}$ is $\frac{K}{2}$, then it is not difficult to check that $\|f_n\|\leq K$ for every $n$.
For $k,j\geq 0$, let $u_{k,j}(x)=f_j(x)e_k$ for any $x\in X$.
One can verify that $u_{k,j}$ is a linear operator in $X$ for every $j,k\geq 0$.
Moreover,
since
$$\|u_{k,j}(x)\|=\|f_j(x)e_k\|\leq \|f_j\|\|x\|\leq K \|x\|,$$
we have $\|u_{j,k}\|\leq K$ for $j,k\geq 0$.
Hence, $\{u_{j,k}\}_{j,k=0}^{\infty}\subset \B(X)$ and $\sup\{\|u_{j,k}\|:j,k\geq 0\}\leq K$.
For any $j,k,l,m\geq 0$, since
$$u_{j,k}u_{l,m}(x)=u_{j,k}f_m(x)e_l=f_m(x)f_k(e_l)e_j
=\delta_{kl} u_{j,m}(x),$$
we have $u_{j,k}u_{l,m}=\delta_{kl} u_{j,m}$.
Meanwhile,
we have
$x=\sum_{k=0}^{\infty}f_k(x)e_k=\sum_{k=0}^{\infty} u_{k,k}x$ for any $x\in X$.
Therefore, $\{u_{i,j}\}_{i,j=0}^{\infty}$ is a system of matrix units of $\B(X)$.
Since $\ran u_{0,0}=\mathbb{C} e_0$, we have $\dim\ran u_{0,0}=1$.
Hence, $X$ has uniform multiplicity infinity of index $1$.
\end{proof}

Recall that
a basis $\{x_n\}_{n=0}^{\infty}$ of a Banach space $X$ is said to be \emph{unconditional} if every convergent series of the form $\sum_n a_nx_n$,
converges unconditionally.
The simplest examples of unconditional bases are the spaces
$c_0$, $\ell^p(1\leq p<\infty)$ and $L^p[0,1](1<p<\infty)$.
However,
the spaces $C[0,1]$ (see\cite{Karlin1948}) or
$L^1[0,1]$ (see\cite{Pelczynski1961}) are spaces which fail to have an unconditional basis.
From Proposition \ref{B:uniP1}, it can be seen that an infinite-dimensional Banach space with uniform multiplicity infinity of index $1$ may not have an unconditional basis.

\begin{prop}\label{S:equi}
Let $X$ be an infinite-dimensional Banach space. Then
$X$ has uniform multiplicity infinity of finite index $n$ if and only if
$X$ has uniform multiplicity infinity of index $1$.
\end{prop}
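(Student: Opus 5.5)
The reverse direction is immediate: index $1$ is a special case of finite index $n$ with $n=1$. So the work is the forward direction. Given a system of matrix units $\{u_{j,k}\}_{j,k=0}^{\infty}$ with $\dim\ran u_{0,0}=n<\infty$, I will construct a Schauder basis of $X$ and then invoke Proposition \ref{B:uniP1}.

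First I choose a basis of the finite-dimensional space $\ran u_{0,0}$, say $x_0^{(1)},\dots,x_0^{(n)}$. Then I pick coordinate functionals $\phi_1,\dots,\phi_n$ on $\ran u_{0,0}$ with $\phi_i(x_0^{(m)})=\delta_{im}$. These are bounded because the space is finite-dimensional, so $\|\phi_i\|\le C$ for some $C$. For $j\ge0$ and $1\le i\le n$, I set $x_j^{(i)}=u_{j,0}x_0^{(i)}$. As in the proof of Proposition \ref{B:uniP1}, the operator $u_{0,j}$ maps $\ran u_{j,j}$ isomorphically onto $\ran u_{0,0}$, with inverse $u_{j,0}$. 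Hence $\{x_j^{(i)}\}_{i=1}^n$ is a basis of $\ran u_{j,j}$, and
$$u_{j,j}x=\sum_{i=1}^n \phi_i(u_{0,j}x)\,x_j^{(i)}.$$
I order the vectors lexicographically as $x_0^{(1)},\dots,x_0^{(n)},x_1^{(1)},\dots$. For the partial sum ending at position $(j,m)$, the associated operator is
$$P_{j,m}x=\sum_{k<j}u_{k,k}x+\sum_{i\le m}\phi_i(u_{0,j}x)\,x_j^{(i)}.$$

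Next I check the two requirements for a basis. For existence of expansions, $\|P_{j,m}x-\sum_{k<j}u_{k,k}x\|\le nCM^2\|x\|_{j}$-type tails. More precisely, the extra block is bounded by $nC\sup\|u\|\,\|u_{j,j}x\|\cdot\sup\|u\|\cdot\max_i\|x_0^{(i)}\|$, using $u_{0,j}x=u_{0,j}u_{j,j}x$. Since $\sum_k u_{k,k}x$ converges, $\|u_{j,j}x\|\to0$, so the partial sums converge to $x$. For uniqueness, suppose $x=\sum_{k}\sum_i b_k^{(i)}x_k^{(i)}$ converges in the lexicographic order. Applying the bounded operator $\phi_i\circ u_{0,0}u_{0,j}$ and using $u_{0,j}x_k^{(i)}=\delta_{jk}x_0^{(i)}$ gives $b_j^{(i)}=\phi_i(u_{0,j}x)$, exactly as in Proposition \ref{B:uniP1}.

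The main obstacle is convergence along the full sequence of partial sums, not just along block ends. This is handled by the uniform bound $\sup\|u_{j,k}\|<\infty$, which makes the intra-block corrections controlled by $\|u_{j,j}x\|\to0$. The corrections are nonzero vectors, so the sequence is a basis. Proposition \ref{B:uniP1} then gives uniform multiplicity infinity of index $1$.
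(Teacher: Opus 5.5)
Your forward direction is correct and is essentially the paper's argument: transport a basis of $\ran u_{0,0}$ by the operators $u_{j,0}$, show the resulting sequence is a Schauder basis of $X$, and invoke Proposition~\ref{B:uniP1}. Your bound on the partial sums inside a block, $nCM^2\max_i\|x_0^{(i)}\|\,\|u_{j,j}x\|$, also makes explicit why the estimate is uniform in $j$, which the paper's constant $D$ needs but does not justify.

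\textbf{The gap is in the converse.} The paper reads and proves the proposition for a prescribed $n$: for that $n$, index $1$ implies index $n$. Your remark that index $1$ is the case $n=1$ only settles $n=1$. For $n\ge 2$ you must construct a system of matrix units $\{v_{k,l}\}$ with $\dim\ran v_{0,0}=n$ exactly, and your proposal does not do this.

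The missing step is to regroup the rank-one system into blocks of length $n$. Set $v_{k,l}=\sum_{r=0}^{n-1}u_{nk+r,\,nl+r}$. Then:
\begin{enumerate}
\item[(a)] $\|v_{k,l}\|\le n\sup_{j,k}\|u_{j,k}\|$.
\item[(b)] The product rule holds: $nl+r=np+s$ with $0\le r,s<n$ forces $l=p$ and $r=s$, so $v_{k,l}v_{p,q}=\delta_{lp}v_{k,q}$.
\item[(c)] $\sum_{k=0}^{K}v_{k,k}x=\sum_{j=0}^{n(K+1)-1}u_{j,j}x\to x$, since these are a subsequence of the convergent partial sums.
\item[(d)] $\ran v_{0,0}=\ran u_{0,0}\oplus\cdots\oplus\ran u_{n-1,n-1}$, a direct sum because the $u_{r,r}$ are mutually annihilating idempotents. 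It therefore has dimension exactly $n$.
\end{enumerate}
With this construction added, your proof is complete.
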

\begin{proof}
``$\Rightarrow$".
Suppose that $\{u_{i,j}\}_{i,j=0}^{\infty}$ is a system of matrix units of $\B(X)$ and $\dim\ran u_{0,0}=n<\infty$.
From Proposition \ref{B:uniP1}, we need only prove that $X$ has a basis.
Since $\dim\ran u_{0,0}=n$,
we have $\dim\ran u_{i,i}=n$ for each $i\geq 0$.
Let $\{f_0,f_1,\ldots,f_{n-1}\}$ be a basis of $\ran u_{0,0}$.
For each $i\geq 0$ and $t=0,1,\ldots,n-1$, let
$e_{ni+t}=u_{i,0}f_t$.
Then
$\{e_{ni},e_{ni+1},\ldots,e_{ni+n-1}\}$
be a basis of $\ran u_{i,i}$.
We will show that $\{e_{j}\}_{j=0}^{\infty}$ is a basis for $X$.

Since $x=\sum_{k=0}^{\infty}u_{k,k}x$ for any $x\in X$ and $u_{k,k}x\in
\ran u_{k,k}$ for every $k\geq 0$, there is a unique
sequence of scalars $\{a_{kn},a_{kn+1},\ldots,a_{kn+n-1}\}$
such that $u_{k,k}x=\sum_{t=0}^{n-1}a_{kn+t}e_{kn+t}$.
It follows that
$$x=\sum_{k=0}^{\infty}\sum_{t=0}^{n-1}a_{kn+t}e_{kn+t}=\sum_{j=0}^{\infty}a_je_j,$$
where $j=kn+t$.
Since
$x=\sum_{k=0}^{\infty} u_{k,k} x$ converges in norm, it follows that
for any
$\varepsilon>0$, there exists a positive integer $K_0$ such that
$\|\sum_{k=K}^{\infty}u_{k,k}x\| \leq \frac{\varepsilon}{2}$ when $K\geq K_0$.
Let $S_K=\sum_{k=0}^{K}u_{k,k}x=\sum_{j=0}^{n(K+1)-1}a_je_j$,
then $\|x-S_{K}\|<\frac{\varepsilon}{2}$ for all $K\geq K_0$.
For any $J\geq nK_0$, let $k=\lfloor \frac{J}{n} \rfloor$, then $k\geq K_0$.
Set $l=J-nK$ $(0\leq l<n)$, then
$$\sum_{j=0}^{J}a_je_j=S_k+\sum_{t=0}^{l}a_{kn+t}e_{kn+t}.$$
Since $u_{k,k}x=\sum_{t=0}^{n-1}a_{kn+t}e_{kn+t}$, there exists a constant $D$ such that
$$\|\sum_{t=0}^{l}a_{kn+t}e_{kn+t}\|\leq D\|u_{k,k} x\|.$$
Since $\lim_{k\to\infty}\|u_{k,k}x\|=0$, there exists a positive integer $K_1$ such that
$\|u_{k,k} x\|<\frac{\varepsilon}{2D}$.
Let $K=\max\{K_0,K_1\}$, then
$$\|\sum_{j=0}^{J}a_j e_j-S_k\|<D\cdot \frac{\varepsilon}{2D}=\frac{\varepsilon}{2}$$
when $J\geq nK$ and $k\geq K$.
Therefore,
$$\|x-\sum_{j=0}^{J}a_je_j\|\leq \|x-S_k\|+\|S_k-\sum_{j=0}^{J}a_je_j\|<\frac{\varepsilon}{2}+\frac{\varepsilon}{2}=\varepsilon.$$
This shows that
the series
$\sum_{j=0}^{\infty} a_j e_j$ converges to $x$ in norm.
To prove uniqueness,
let $x=\sum_{j=0}^{\infty}b_je_j$.
Then
$$u_{k,k} x=u_{k,k}\sum_{j=0}^{\infty}b_j e_j=\sum_{t=0}^{n-1}b_{kn+t}e_{kn+t}.$$
Since $u_{k,k}x=\sum_{t=0}^{n-1}a_{kn+t}e_{kn+t}$ and $\{e_{kn+t}\}_{t=0}^{n-1}$ is a basis of $\ran u_{k,k}$,
we have $a_{kn+t}=b_{kn+t}$ for all $t$.
In summary, $\{e_j\}_{j=0}^{\infty}$ is a basis of $X$.

``$\Leftarrow$".
Let $\{u_{i,j}\}_{i,j=0}^{\infty}$ be a system of matrix units of $\B(X)$ and $\dim\ran u_{0,0}=1$.
Next, we show that $X$ has uniform multiplicity infinity of finite index $n$.
For any $k,l\geq 0$, let
$$v_{k,l}=\sum_{r=0}^{n-1}u_{nk+r,nl+r}.$$
Denote $C=\sup\{\|u_{k,l}\|:k,l\geq 0\}<\infty$, then $\|v_{k,l}\|\leq\sum_{r=0}^{n-1}\|u_{nk+r,nl+r}\|\leq nC$.
So, $\sup\{\|v_{k,l}\|:k,l\geq 0\}\leq nC<\infty$.
For any $k,l,p,q\geq 0$,
\begin{align*}
v_{k,l}v_{p,q}=\left(\sum_{r=0}^{n-1}u_{nk+r,nl+r}\right)\left(\sum_{s=0}^{n-1}u_{np+s,nq+s}\right)
&=\sum_{r,s=0}^{n-1}u_{nk+r,nl+r}u_{np+s,nq+s}=\delta_{lp}v_{k,q}.
\end{align*}
For any $x\in X$,
we have
$$x=\sum_{j=0}^{\infty}u_{j,j}x=\sum_{k=0}^{\infty}\sum_{r=0}^{n-1}u_{nk+r,nk+r}x=\sum_{k=0}^{\infty}v_{k,k}x.$$
Hence,
$\{v_{k,l}\}_{k,l=0}^{\infty}$ is a system of matrix units of $\B(X)$.
In particular,
since $\ran v_{0,0}=\ran \left(\sum_{r=0}^{n-1}\ran u_{r,r}\right)$ and $\dim\ran u_{r,r}=1$ for every $r$, we have $\dim\ran v_{0,0}=n$.
In summary, $X$ has uniform multiplicity infinity of index $n$.
\end{proof}

Proposition \ref{B:uniP1} and Proposition \ref{S:equi} show that an infinite-dimensional Banach space has a basis if and only if it has uniform multiplicity infinity of finite index.
However, the case of an infinite-dimensional Banach space with uniform multiplicity infinity of infinite index is different from that of finite index case.
In the following sections, we will prove that some classical Banach spaces with uniform multiplicity infinity of infinite index.

Let $\{X_n\}_{n=0}^{\infty}$ be a sequence of Banach spaces, and let $\|\cdot\|$ denote the norm on each $X_n$.
Define
$$\oplus_{0} X_n=\{x=\{x_n\}_{n=0}^{\infty}\in\prod_{n=0}^{\infty}X_n:\lim_{n\rightarrow \infty }\|x_n\|=0\},$$
endowed with the norm $\|x\|=\sup_{n}\|x_n\|$.

For $1\leq p<\infty$, define
$$\oplus_{p} X_n=\{x=\{x_n\}_{n=0}^{\infty}\in\prod_{n=0}^{\infty}X_n:\|x\|_p=\big(\sum_{n=0}^{\infty}\|x_n\|^p\big)^{\frac{1}{p}}<\infty\}.$$

\begin{lem}\label{L:direct}
Let $\{X_i\}_{i=0}^{\infty}$ be a sequence of infinite-dimensional Banach spaces.
Suppose that there exists a family $\{T_{ij}\}_{i,j=0}^{\infty}$ of bounded linear operators, where each $T_{ij}:X_j\rightarrow X_i$
is a linear homeomorphism, and such that
$\sup\{\|T_{ij}\|,\|T_{ij}^{-1}\|:i,j\geq 0\}<\infty$.
Then $\oplus_p X_i$ for $1\leq p<\infty$ and $\oplus_0 X_i$ have uniform multiplicity infinite of infinite index.
\end{lem}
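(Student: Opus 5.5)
We build the matrix units directly from the maps $T_{ij}$. First we normalize them so that they compose like matrix units. Put $S_{ij}=T_{i0}T_{j0}^{-1}:X_j\to X_i$. Then $S_{ii}=I_{X_i}$ and $S_{ij}S_{jk}=T_{i0}T_{j0}^{-1}T_{j0}T_{k0}^{-1}=S_{ik}$. Moreover
$$\|S_{ij}\|\le \|T_{i0}\|\,\|T_{j0}^{-1}\|\le M^2,$$
where $M=\sup\{\|T_{ij}\|,\|T_{ij}^{-1}\|\}$. This step is what rescues us if the given family is not itself multiplicative; only the maps $T_{i0}$ are actually needed.

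Next, write $X=\oplus_p X_i$ (or $\oplus_0 X_i$). Let $J_i:X_i\to X$ be the canonical injection and $P_i:X\to X_i$ the coordinate projection, both of norm at most $1$. Define
$$u_{i,j}=J_iS_{ij}P_j\in\B(X),$$
so $u_{i,j}$ takes the $j$-th coordinate of $x$, moves it by $S_{ij}$, and places it in slot $i$. Then $\|u_{i,j}\|\le M^2$, which is condition (a). For condition (b), $P_kJ_l=\delta_{kl}I_{X_k}$, so
$$u_{j,k}u_{l,m}=J_jS_{jk}P_kJ_lS_{lm}P_m=\delta_{kl}J_jS_{jm}P_m=\delta_{kl}u_{j,m}.$$
For condition (c), $u_{k,k}x=J_kP_kx$ is the vector with the single coordinate $x_k$. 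Hence
$$\Big\|x-\sum_{k=0}^{N}u_{k,k}x\Big\|=\Big(\sum_{k>N}\|x_k\|^p\Big)^{1/p}\quad\text{or}\quad \sup_{k>N}\|x_k\|,$$
and both tend to $0$: the first because $\sum_k\|x_k\|^p<\infty$ when $p<\infty$, the second by the definition of $\oplus_0$. So (c) holds with norm convergence.

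Finally, $\ran u_{0,0}=J_0X_0$ is isometric to $X_0$, which is infinite-dimensional. So the system $\{u_{i,j}\}$ has infinite index, and $X$ has uniform multiplicity infinity of infinite index.

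The main obstacle is keeping the composition law and the uniform bound simultaneously. The normalization $S_{ij}=T_{i0}T_{j0}^{-1}$ handles this, and the uniform bound on $\|T_{i0}\|$ and $\|T_{j0}^{-1}\|$ gives $\sup\|u_{i,j}\|\le M^2$. The remaining verifications are routine. The only place where $p<\infty$ versus the $c_0$-sum matters is the tail estimate in (c), and it holds in both cases. It is the $\ell^\infty$-sum, not treated here, where (c) would fail.
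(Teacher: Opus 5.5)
Your proposal is correct and follows essentially the same proof as the paper. The paper also normalizes the family through $X_0$ to get multiplicative maps $S_{ij}$ bounded by $C^2$, sets $u_{i,j}=J_iS_{ij}P_j$, and checks (a), (b), (c) and $\dim\ran u_{0,0}=\infty$ as you do. The only difference is that the paper uses $S_{ij}=T_{0i}^{-1}T_{0j}$, where you use $T_{i0}T_{j0}^{-1}$, which changes nothing.
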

\begin{proof}
Let $C=\sup\{\|T_{ij}\|,\|T_{ij}^{-1}\|:i,j\geq 0\}<\infty$.
For each $i\geq 0$, let $\phi_i:X_i\rightarrow X_0$, $\phi_i=T_{0i}$.
Then each $\phi_{i}$ is a linear
homeomorphism with $\|\phi_{i}\|\leq C$ and $\|\phi^{-1}_{i}\|\leq C$.
For $i,j\geq 0$,
let $S_{ij}:X_j\rightarrow X_i$,
$S_{ij}=\phi^{-1}_{i}\circ\phi_j$.
Then each $S_{ij}$ is a linear
homeomorphism.
Notice that, for any $i,j,l\geq 0$, we have
$$S_{ij}S_{jl}=(\phi_{i}^{-1}\circ \phi_{j})\circ (\phi_{j}^{-1}\circ \phi_l)=\phi_{i}^{-1}\circ\phi_{l}=S_{il}.$$
In particular,
$\|S_{ij}\|\leq\|\phi_{i}^{-1}\|\|\phi_j\|\leq C^2$,
and $\|S_{ij}^{-1}\|=\|S_{ji}\|\leq C^2$.
Hence, $\sup\{\|S_{ij}\|,\|S_{ij}^{-1}\|:i,j\geq 0\}\leq C^2$.

Denote $X=\oplus_p X_i$ or $X=\oplus_0 X_i$.
For $i\geq 0$, let
$$P_i:X\rightarrow X_i,\quad P_i(\{x_k\}_{k=0}^{\infty})=x_i,$$
and $J_i:X_i\rightarrow X$, $J_i(x)=(0,\ldots,0,x,0,\ldots)$ with $x$ in the $i$-th position.
Then $\|J_i\|= 1$ and $\|P_i\|= 1$.
For $i,j\geq 0$,
let
$$u_{i,j}=J_{i}S_{ij}P_j: X\rightarrow X.$$
We next shall show that $\{u_{i,j}\}_{i,j=0}^{\infty}$ is a system of matrix units of $\B(X)$.
Since $\sup\{\|S_{ij}\|,\|S_{ij}^{-1}\|:i,j\geq 0\}\leq C^2$, we have
$$\|u_{i,j}\|\leq \|J_i\|\|S_{ij}\|\|P_j\|\leq \|S_{ij}\|\leq C^2.$$
Thus, $\sup\{\|u_{i,j}\|:i,j\geq 0\}\leq C^2$.
For any $i,j,k,l\geq 0$, we have
$$u_{i,j}u_{k,l}=(J_i S_{ij} P_j)(J_k S_{kl}P_l)=J_i S_{ij}(P_jJ_k)S_{kl}P_l.$$
Since $P_j J_k=0$ when $j\neq k$ and $P_j J_k=id_{X_j}$ when $j=k$, we have $P_j J_k=\delta_{jk}id_{X_j}$.
Thus,
$$u_{i,j}u_{k,l}=\delta_{jk}J_iS_{ij}S_{kl}P_l=\delta_{jk}J_iS_{il}P_l=\delta_{jk}u_{i,l}.$$
Finally, for any $x=\{x_n\}_{n=0}^{\infty}\in X$, we have
$$\sum_{k=0}^{N}u_{k,k}(x)=\sum_{k=0}^{N}J_kS_{kk}P_k(x)=\sum_{k=0}^{N}J_k x_k=(x_0,\ldots,x_N,0,\ldots).$$
In the case of $X=\oplus_p X_n$,
$$\|x-\sum_{n=0}^{N}u_{n,n}x\|_p^p=\sum_{n=N+1}^{\infty}\|x_n\|^p\rightarrow 0$$
as $N\rightarrow \infty$.
In the case of $X=\oplus_0 X_n$,
$$\|x-\sum_{n=0}^{N}u_{n,n}x\|=\sup_{n>N}\|x_n\|\rightarrow 0$$
as $N\rightarrow \infty$.
So, $\{u_{i,j}\}_{i,j=0}^{\infty}$ is a system of matrix units of $\B(X)$.

Since $\ran u_{0,0}=J_0 (X_0)$ and
$\dim\ran X_0=\infty$, we have $\dim\ran u_{0,0}=\infty$.
Hence, $X$ has uniform multiplicity infinite of infinite index.
\end{proof}
It can be seen from Lemma \ref{L:direct} that a Banach space with uniform multiplicity infinite of infinite index may not have a basis.
Next, applying Lemma \ref{L:direct}, we show that some classical Banach spaces have uniform multiplicity infinity of infinite index.

\begin{prop}\label{B:lp}
For $1\leq p<\infty$, let
$$\ell^p=\{x=\{\alpha_n\}_{n=0}^{\infty}:\|x\|_p^p=\sum_{n=0}^{\infty}|\alpha_n|^p<\infty\}.$$
Then $\ell^p$ is a Banach space with uniform multiplicity infinity of infinity index.
\end{prop}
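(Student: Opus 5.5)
The plan is to realize $\ell^p$ as an $\ell^p$-direct sum of countably many copies of itself and then invoke Lemma \ref{L:direct}. Fix a bijection $\pi:\mathbb{N}\times\mathbb{N}\to\mathbb{N}$ (with $\mathbb{N}=\{0,1,2,\ldots\}$), for instance the Cantor pairing. Let $X_i=\ell^p$ for every $i\geq 0$. We then define
$$\Phi:\oplus_p X_i\to\ell^p,\qquad \Phi(\{x_i\}_{i=0}^{\infty})=\{\beta_m\}_{m=0}^{\infty},\quad \beta_{\pi(i,n)}=x_i(n),$$
where $x_i=\{x_i(n)\}_{n=0}^\infty\in\ell^p$.

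The first step is to check that $\Phi$ is an isometric isomorphism. Since $\pi$ is a bijection and all terms are nonnegative, we may rearrange freely:
$$\sum_m|\beta_m|^p=\sum_i\sum_n|x_i(n)|^p=\sum_i\|x_i\|_p^p.$$
This shows that $\Phi$ is isometric. It is surjective because any $\{\beta_m\}\in\ell^p$ gives $x_i(n)=\beta_{\pi(i,n)}$, and then each $x_i\in\ell^p$ with $\sum_i\|x_i\|^p<\infty$. Linearity is clear.

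The second step is to apply Lemma \ref{L:direct} to $\{X_i\}$ with $T_{ij}=\mathrm{id}_{\ell^p}$. Then $\|T_{ij}\|=\|T_{ij}^{-1}\|=1$, and each $X_i$ is infinite-dimensional. The lemma produces a system of matrix units $\{u_{i,j}\}$ on $\oplus_p X_i$ with $\dim\ran u_{0,0}=\infty$.

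The final step is to transport this system along $\Phi$: set $w_{i,j}=\Phi u_{i,j}\Phi^{-1}\in\B(\ell^p)$. Because $\Phi$ is an isometry, we get $\|w_{i,j}\|=\|u_{i,j}\|$, so the norms stay uniformly bounded. The multiplication rules are preserved under conjugation. For the partition of unity, $x=\sum_k w_{k,k}x$ holds because $\Phi^{-1}x=\sum_k u_{k,k}\Phi^{-1}x$ in norm and $\Phi$ is continuous. Finally, $\ran w_{0,0}=\Phi(\ran u_{0,0})$ is infinite-dimensional.

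There is no real obstacle here. The only point needing care is the rearrangement in the first step, and that is justified because the series has nonnegative terms. The argument is entirely routine.
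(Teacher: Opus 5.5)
Your proof is correct and follows essentially the paper's route. Both arguments realize $\ell^p$ isometrically as an $\ell^p$-sum of countably many copies of $\ell^p$ via a bijection $\mathbb{N}\times\mathbb{N}\to\mathbb{N}$, and then invoke Lemma \ref{L:direct}. The paper uses $(k,m)\mapsto 2^k(2m+1)-1$, internal subspaces $X_k$, and transition isometries induced by $\sigma_i\sigma_j^{-1}$; you use Cantor pairing, external copies, and $T_{ij}=\mathrm{id}$. Your explicit conjugation $w_{i,j}=\Phi u_{i,j}\Phi^{-1}$ simply makes precise the transport step that the paper leaves implicit when it writes $\ell^p=\bigoplus_p X_k$.
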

\begin{proof}
For $k=0,1,2,\cdots$, let
$$A_k=\{n\in \mathbb{N}:n=2^k(2m+1)-1,m=0,1,2,\cdots\}.$$
Clearly, the cardinal number $|A_k|=\infty$ for each $k$,  $\bigcup_{k=0}^{\infty} A_k=\bN$ and $A_k \cap A_j=\varnothing$ for $k\neq j$.
For each $k\geq 0$, let
$$X_k=\{x\in\ell^p:x=\sum_{n\in A_k}\alpha_ne_n,\alpha_n\in\bC\},$$
where $\{e_n\}_{n=0}^{\infty}$ is a canonical basis for $\ell^p$.
Then $\dim X_k=\infty$ for every $k$, and $\ell^p=\bigoplus_{p}X_k$.
To see this, let $\sigma_k:\bN\rightarrow A_k$, $\sigma_k(m)=2^k(2m+1)-1$,
then $\sigma_k$ is a bijection for each $k$.
Let
$$\phi_k:\ell^p\rightarrow X_k,\quad \phi_k(\{\alpha_m\}_{m=0}^{\infty})=\sum_{m=0}^{\infty}\alpha_me_{\sigma_k(m)}.$$
Then $\phi_k$ is an isometric isomorphism for every $k$.
Hence, $X_k\cong \ell^p$.
Moreover, for any $x=\{\alpha_n\}_{n=0}^{\infty}\in\ell^p$, we have
$$\|x\|_p^p=\sum_{n=0}^{\infty}|\alpha_n|^p=\sum_{k=0}^{\infty}\sum_{n\in A_k}|\alpha_n|^p=\sum_{k=0}^{\infty}\|x^{(k)}\|_p^p,$$
where $x^{(k)}=\{\alpha_n\}_{n\in A_k}$. Thus, $\ell^p=\bigoplus_{p}X_k$.

For $i,j\geq 0$, let $T_{ij}:X_j\rightarrow X_i$,
$$T_{ij}(\{\alpha_n\}_{n\in A_j})=\sum_{n\in A_j}\alpha_n e_{\tau_{ij}(n)},$$
where $\tau_{ij}:A_j\rightarrow A_i$, $\tau_{ij}=\sigma_i\circ\sigma_j^{-1}$.
Then
$T_{ij}$ is a linear homeomorphisms and $\sup\{\|T_{ij}\|,\|T_{ij}^{-1}\|:i,j\geq 0\}=1$.
Finally,
by Lemma \ref{L:direct}, $\ell^p$ has uniform multiplicity infinity of infinite index.
\end{proof}

\begin{cor}
Let
$$c_0=\{x=\{\alpha_n\}_{n=0}^{\infty}:\lim_{n\rightarrow\infty}\alpha_n=0\}.$$
Then $c_0$ is a Banach space with uniform multiplicity infinity of infinity index.
\end{cor}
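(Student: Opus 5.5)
The plan is to mimic the proof of Proposition \ref{B:lp} and then invoke Lemma \ref{L:direct} with $p$ replaced by the $c_0$-sum. Let $A_k=\{2^k(2m+1)-1: m\geq 0\}$ for $k\geq 0$, exactly as before. These sets are infinite and pairwise disjoint, and their union is $\bN$. For each $k$, let $X_k$ be the closed subspace of $c_0$ consisting of sequences supported on $A_k$, that is, $X_k=\overline{\operatorname{span}}\{e_n:n\in A_k\}$. Each $X_k$ is infinite-dimensional. The bijection $\sigma_k:\bN\to A_k$, $\sigma_k(m)=2^k(2m+1)-1$, induces an isometric isomorphism $\phi_k:c_0\to X_k$, $\{\alpha_m\}\mapsto\sum_m\alpha_m e_{\sigma_k(m)}$. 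This map is isometric because relabelling coordinates does not change the sup norm, and it is onto because a sequence supported on $A_k$ tending to $0$ pulls back to a null sequence.

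The main step is to identify $c_0$ isometrically with $\oplus_0 X_k$ via $x\mapsto\{x^{(k)}\}_{k\geq 0}$, where $x^{(k)}$ is the restriction of $x$ to $A_k$. The norms agree, since $\sup_n|\alpha_n|=\sup_k\sup_{n\in A_k}|\alpha_n|=\sup_k\|x^{(k)}\|$. To see that the image lies in $\oplus_0X_k$, I need $\|x^{(k)}\|\to 0$. Given $\varepsilon>0$, choose $N$ with $|\alpha_n|<\varepsilon$ for $n\geq N$. Since $\min A_k=2^k-1$, we have $A_k\subset[N,\infty)$ once $2^k-1\geq N$, and for those $k$ we get $\|x^{(k)}\|\leq\varepsilon$.

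Surjectivity is the step I expect to need the most care, because unlike the $\ell^p$ case it relies on the specific geometry of the sets $A_k$. Take $\{x_k\}\in\oplus_0X_k$ and glue the pieces into a single sequence $\alpha$. For a given $\varepsilon$, only finitely many $k$, say $k<K$, have $\|x_k\|\geq\varepsilon$. Each of these finitely many $x_k$ is a null sequence, so its entries are below $\varepsilon$ beyond some index $N_k$. Taking $n$ larger than all the $N_k$ for $k<K$ gives $|\alpha_n|<\varepsilon$, so $\alpha\in c_0$.

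With the identification in hand, define $T_{ij}=\phi_i\circ\phi_j^{-1}:X_j\to X_i$. Each $T_{ij}$ is an isometric isomorphism, so $\sup\{\|T_{ij}\|,\|T_{ij}^{-1}\|\}=1$. Lemma \ref{L:direct} in the $\oplus_0$ case then gives a system of matrix units on $c_0$ with $\ran u_{0,0}$ infinite-dimensional. Hence $c_0$ has uniform multiplicity infinity of infinite index.
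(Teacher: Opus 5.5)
Your proposal is correct and follows the route the paper indicates: the paper's proof says only to repeat the argument of Proposition \ref{B:lp} with the same sets $A_k$ and then apply Lemma \ref{L:direct} in the $\oplus_0$ case, and you supply the $c_0$-specific verification that $\|x^{(k)}\|\to 0$, which the paper leaves implicit. One small correction to your commentary: it is that forward inclusion, not surjectivity, that uses where the $A_k$ sit (through $\min A_k\to\infty$, which in fact holds for any partition of $\bN$ into blocks), while your gluing argument for surjectivity works for any partition.
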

\begin{proof}
It can be proved by the method used in the the proof of Proposition \ref{B:lp}.
\end{proof}

\begin{prop}
Let $L_0[0,1]$ be of the collection of all (equivalence classes, under equality almost everywhere) Lebesgue measurable functions $f:[0,1]\rightarrow \mathbb{C}$.
For $1\leq p<\infty$, let
$$L^p[0,1]=\{f\in L_0[0,1]:\|f\|_p=\left(\int_{[0,1]}|f(x)|^p \mathrm{d}x\right)^{\frac{1}{p}}<\infty\}.$$
Then $L^p[0,1]$ is a Banach space with uniform multiplicity infinity of infinity index.
\end{prop}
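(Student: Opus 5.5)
Following the pattern of Proposition \ref{B:lp}, I will decompose $[0,1]$ into countably many disjoint intervals and apply Lemma \ref{L:direct} with $p$ fixed. For $k\geq 0$ let
$$I_k=\Big[1-\frac{1}{2^k},\,1-\frac{1}{2^{k+1}}\Big),$$
so the $I_k$ are pairwise disjoint, their union is $[0,1)$ (a set of full measure), and $|I_k|=2^{-(k+1)}$. Set
$$X_k=\{f\in L^p[0,1]: f=0 \text{ a.e. off } I_k\}.$$
Each $X_k$ is a closed, infinite-dimensional subspace; indeed it is isometrically isomorphic to $L^p(I_k)$. For $f\in L^p[0,1]$ write $f_k=f\upchi_{I_k}$. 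Then
$$\|f\|_p^p=\sum_{k}\|f_k\|_p^p,$$
and conversely every sequence $(f_k)$ with $f_k\in X_k$ and $\sum_k\|f_k\|_p^p<\infty$ defines an element $\sum_k f_k$ of $L^p[0,1]$, the series converging in $L^p$. Hence the map $f\mapsto (f_k)_k$ is an isometric isomorphism of $L^p[0,1]$ onto $\oplus_p X_k$.

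Next I will construct the operators $T_{ij}$. Let $\alpha_{ij}:I_i\to I_j$ be the increasing affine bijection, which has constant derivative $c_{ij}=|I_j|/|I_i|=2^{i-j}$. Define
$$T_{ij}:X_j\to X_i,\qquad (T_{ij}g)(t)=c_{ij}^{1/p}\,g(\alpha_{ij}(t))\quad\text{for }t\in I_i,$$
and $T_{ij}g=0$ off $I_i$. By the change of variables $s=\alpha_{ij}(t)$,
$$\int_{I_i}|g(\alpha_{ij}(t))|^p c_{ij}\,dt=\int_{I_j}|g(s)|^p\,ds.$$
So $T_{ij}$ is a surjective isometry, and its inverse is $T_{ji}$, which is also an isometry. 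Therefore
$$\sup\{\|T_{ij}\|,\|T_{ij}^{-1}\|: i,j\geq 0\}=1.$$
Lemma \ref{L:direct} then shows that $\oplus_pX_k$ has uniform multiplicity infinity of infinite index.

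Finally, I will transfer this property back to $L^p[0,1]$. The matrix units $u_{i,j}$ supplied by the lemma are conjugated through the isometric isomorphism above. This preserves the uniform norm bound (a), the multiplication rule (b), and the norm convergence in (c). It also preserves $\dim\ran u_{0,0}=\infty$, because $\ran u_{0,0}$ corresponds to $X_0\cong L^p(I_0)$, which is infinite-dimensional.

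The main point needing care is the scaling factor $c_{ij}^{1/p}$. The interval lengths vary with $k$, and it is exactly this weight that makes the $T_{ij}$ isometries, so that the uniform bound required by Lemma \ref{L:direct} holds. The identification $L^p[0,1]=\oplus_p X_k$ also needs a brief justification, namely completeness and the fact that $\bigcup_k I_k$ has full measure; this is routine.
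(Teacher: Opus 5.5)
Your proposal is correct and follows essentially the same route as the paper. Both use the same intervals $[1-2^{-k},1-2^{-k-1})$, the identification $L^p[0,1]\cong\bigoplus_p L^p(I_k)$, isometries $T_{ij}$ given by an affine change of variables weighted by the $p$-th root of the scaling factor (your $\alpha_{ij}$ and $c_{ij}$ are the paper's $\phi_{ij}^{-1}$ and $a_{ij}^{-1}$), and then Lemma~\ref{L:direct}; the one difference is that you spell out the transfer of the matrix units back to $L^p[0,1]$ through the isometric isomorphism, which the paper leaves implicit.
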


\begin{proof}
For $n\in \bN$, let $E_n = [1-\frac{1}{2^n}, 1-\frac{1}{2^{n+1}})$.
Then $\bigcup_{n \in \bN} E_n=[0,1)$ and $E_n \cap E_m=\varnothing$ for $n\neq m$.
Since the Lebesgue measure of any singleton is zero, one can consider the space $L^p[0,1)$ replace $L^p[0,1]$.
For any $f \in L^p[0,1)$, denote $f_n=f|_{E_n}$.
Then we have $f=g \in L^p[0,1)$ if and only if $f_n=g_n \in L^p(E_n)$ for all $n\in \bN$.
Notice that, for each $f \in L^p[0,1)$, we have
$$\|f\|_p=\left(\int_{[0,1]}|f|^p dx\right)^{\frac{1}{p}}=\left(\sum\limits_{n=0}^{\infty}\int_{E_n}|f|^p dx\right)^{\frac{1}{p}}=\left(\sum\limits_{n=0}^{\infty}\| f_n \|_{L^p(E_n)}^p\right)^{\frac{1}{p}},$$
where $\| f_n \|_{L^p(E_n)} =(\int_{E_n}|f_n|^p dx)^{\frac{1}{p}}$.
Hence, $L^p[0,1)\cong\bigoplus_{p}L^p(E_n)$.

We now construct a family of mutual linear homeomorphisms between the spaces $L^p(E_n)$.
For $i,j\geq 0$,
let
$$\phi_{ij}:E_j\rightarrow E_i,~~\phi_{ij}(x)=a_{ij}x+b_{ij},$$
where $a_{ij}=2^{j-i}$, $b_{ij}=1-2^{j-i}$.
Then $\phi_{ij}$ is a bijection and $$\phi_{ij}^{-1}(y)=\frac{1}{a_{ij}}y-\frac{b_{ij}}{a_{ij}}.$$
Let $T_{ij}:L^p(E_j)\rightarrow L^p(E_i)$,
$$(T_{ij}f)(y)=a_{ij}^{-\frac{1}{p}}f(\phi_{ij}^{-1}(y)),\quad y\in E_i.$$
Then $T_{ij}$ is a linear homeomorphism and $\sup\{\|T_{ij}\|,\|T_{ij}^{-1}\|:i,j\geq 0\}=1$.
In fact,
$$\|T_{ij}f\|^p_{L^p(E_i)}=\int_{E_i}|(T_{ij}f)(y)|^p \mathrm{d}y=a_{ij}^{-1}\int_{E_i}|f(\phi_{ij}^{-1}(y))|^p \mathrm{d}y.$$
Let $x=\phi_{ij}^{-1}(y)$, then $y=\phi_{ij}(x)$ and $\mathrm{d}y=a_{ij}\mathrm{d}x$. Thus
$$\|T_{ij}f\|^p_{L^p(E_i)}=a_{ij}^{-1}\int_{E_j}|f(x)|^p a_{ij}\mathrm{d}x=\int_{E_j}|f(x)|^p\mathrm{d}x=\|f\|_{L^p(E_j)}^p.$$
So, $\|T_{ij}\|=1$ for every $i,j\geq 0$.
Moreover, we have $T_{ij}^{-1}=T_{ji}$ and $\|T_{ji}\|=1$.
By Lemma \ref{L:direct}, $L^p[0,1]$ has uniform multiplicity infinity of infinite index.
\end{proof}

\begin{prop}\label{S3:cuni}
Let $C(\mathbb{T})$ be the Banach space of all continuous functions $f:\mathbb{T}\rightarrow \mathbb{C}$ with the norm $\|f\|=\sup\{|f(z)|:z\in\mathbb{T}\}$, where $\mathbb{T}$ denote unit circle in the complex plane.
Then $C(\mathbb{T})$ has uniform multiplicity infinity of infinity index.
\end{prop}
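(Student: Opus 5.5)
The plan is to reduce Proposition \ref{S3:cuni} to Lemma \ref{L:direct}. We show that $C(\mathbb{T})$ is isomorphic, as a Banach space, to a $c_0$-sum $\oplus_0 X_n$ of mutually isometric infinite-dimensional spaces. We then transport the resulting system of matrix units back along the isomorphism.

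\textbf{Step 1: isomorphisms preserve the property.} First we record that uniform multiplicity infinity of infinite index is invariant under isomorphism. Suppose $S:X\to Y$ is a linear homeomorphism and $\{u_{j,k}\}$ is a system of matrix units in $\B(X)$ as in Definition \ref{I:uniform}. Put $v_{j,k}=Su_{j,k}S^{-1}$. Then $\sup\|v_{j,k}\|\le\|S\|\|S^{-1}\|\sup\|u_{j,k}\|<\infty$, and the relations $v_{j,k}v_{l,m}=\delta_{kl}v_{j,m}$ hold. For $y\in Y$, the series $\sum_k v_{k,k}y=S\sum_k u_{k,k}S^{-1}y$ converges in norm to $y$. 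Finally, $\ran v_{0,0}=S(\ran u_{0,0})$ is infinite-dimensional.

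\textbf{Step 2: the isomorphism.} Let $\Delta$ be the Cantor set. Consider the compact metric space $K=(\bN\times\Delta)\cup\{\infty\}$, the one-point compactification of the disjoint union of countably many copies of $\Delta$. A function in $C(K)$ is determined by its value $c=f(\infty)$ together with the sequence $(f|_{\{n\}\times\Delta}-c)_n$, and this sequence tends to $0$ uniformly. Hence $C(K)\cong\mathbb{C}\oplus\big(\oplus_0 C(\Delta)\big)$, and the isomorphism has norm at most $2$ in each direction.

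Both $K$ and $\mathbb{T}$ are uncountable compact metric spaces, so Milutin's theorem gives $C(\mathbb{T})\cong C[0,1]\cong C(K)$. To absorb the extra $\mathbb{C}$, note that $\oplus_0 C(\Delta)\cong\mathbb{C}\oplus\oplus_0 C(\Delta)$. This holds because $C(\Delta)\cong\mathbb{C}\oplus C(\Delta)$: split $\Delta$ into two clopen copies of itself, and use that $C(\Delta)$ contains a complemented copy of $c_0$, which absorbs a line. Alternatively, one may apply Milutin directly to $\bN\times\Delta$ with a point added suitably. Either way, $C(\mathbb{T})\cong\oplus_0 X_n$ with every $X_n=C(\Delta)$.

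\textbf{Step 3: apply Lemma \ref{L:direct}.} Take $T_{ij}=\mathrm{id}_{C(\Delta)}$, so that $\sup\{\|T_{ij}\|,\|T_{ij}^{-1}\|\}=1$. Lemma \ref{L:direct} shows that $\oplus_0 C(\Delta)$ has uniform multiplicity infinity of infinite index. Step 1 then transfers this to $C(\mathbb{T})$.

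\textbf{Main obstacle.} The main difficulty is Step 2, the identification of $C(\mathbb{T})$ with a $c_0$-sum, since $C(\mathbb{T})$ itself has no natural disjoint decomposition: $\mathbb{T}$ is connected, so there are no nontrivial continuous band projections. I would rely on Milutin's theorem as a black box and check the elementary absorption of one-dimensional summands carefully. If a self-contained argument is preferred, a fallback is to construct the decomposition by hand. One would take a sequence of disjoint closed arcs accumulating at a point $z_0\in\mathbb{T}$, with bounded linear extension operators handling the complement, but this requires more delicate norm estimates.
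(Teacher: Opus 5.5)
Your proof is correct, but it takes a different route from the paper's. The paper does not use Milutin's theorem. It takes $z_n=e^{2\pi i(1-2^{-n})}$ accumulating at $z_0=1$ and lets $I_n$ be the arcs between consecutive $z_n$. It then sends $g\in C_0(\mathbb{T})=\{g\in C(\mathbb{T}): g(z_0)=0\}$ to $\big((g_n)_n,(g(z_n))_{n\geq 1}\big)$, where $g_n$ is $g|_{I_n}$ minus the linear interpolant of its endpoint values. This gives an explicit isomorphism $C(\mathbb{T})\cong\big(\oplus_0 C_0(I_n)\big)\oplus c_0$ with constants $2$. The spaces $C_0(I_n)$ are mutually isometric by affine reparametrization, so Lemma \ref{L:direct} handles the first summand, the corollary for $c_0$ handles the second, and Lemma \ref{S3:Duni} glues them. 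The transport along the isomorphism is your Step 1, which the paper uses tacitly.

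Your ``fallback'' is therefore exactly the paper's proof, and it is not delicate. The extension operator is just piecewise-linear interpolation, and the sequence of junction values is kept as a separate $c_0$ summand rather than absorbed.

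What your route buys is generality. Via Milutin it shows that $C(K)$ has uniform multiplicity infinity of infinite index for every uncountable compact metric $K$. It also exhibits $C(K)$ as a $c_0$-sum of copies of one space $C(\Delta)$, so Lemma \ref{L:direct} applies with identity maps and Lemma \ref{S3:Duni} is unnecessary. The price is reliance on a deep theorem plus the absorption of a line.

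Your justification of $C(\Delta)\cong\mathbb{C}\oplus C(\Delta)$ is terse, and the clopen splitting plays no role. What works is writing $C(\Delta)=c_0\oplus Y$ and using $c_0\oplus\mathbb{C}\cong c_0$. Alternatively, apply Milutin to $\Delta$ with an isolated point adjoined. You then get $\oplus_0 C(\Delta)\cong\mathbb{C}\oplus\big(\oplus_0 C(\Delta)\big)$ by splitting off the first coordinate; this step should be stated explicitly, but both steps are routine.
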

To give the proof of Proposition \ref{S3:cuni}, we first present the following lemma.

\begin{lem}\label{S3:Duni}
Let $X_1$ and $X_2$ be Banach spaces that both have uniform multiplicity infinity of infinity index.
Then the space $X_1\oplus X_2$, endowed with the maximum norm, is a Banach space with uniform multiplicity infinity of infinity index.
\end{lem}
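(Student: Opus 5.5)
The plan is to interleave the two systems of matrix units. Let $\{u_{j,k}\}_{j,k=0}^{\infty}\subset\B(X_1)$ and $\{v_{j,k}\}_{j,k=0}^{\infty}\subset\B(X_2)$ be systems of matrix units as in Definition \ref{I:uniform}, with $\dim\ran u_{0,0}=\dim\ran v_{0,0}=\infty$. Set $X=X_1\oplus X_2$ with $\|(x_1,x_2)\|=\max\{\|x_1\|,\|x_2\|\}$. I will define operators on $X$ by the block-diagonal formula
$$w_{j,k}(x_1,x_2)=(u_{j,k}x_1,\;v_{j,k}x_2),\qquad j,k\geq 0,$$
so that $w_{j,k}=u_{j,k}\oplus v_{j,k}$.

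The verification then follows the three conditions in order. For (a), the maximum norm gives $\|w_{j,k}\|=\max\{\|u_{j,k}\|,\|v_{j,k}\|\}$, and this is bounded uniformly by the larger of the two suprema. For (b), multiplication is computed coordinatewise: $w_{j,k}w_{l,m}=(u_{j,k}u_{l,m})\oplus(v_{j,k}v_{l,m})=\delta_{kl}(u_{j,m}\oplus v_{j,m})=\delta_{kl}w_{j,m}$. For (c), with $x=(x_1,x_2)$, I estimate
$$\Big\|x-\sum_{k=0}^{N}w_{k,k}x\Big\|=\max\Big\{\Big\|x_1-\sum_{k=0}^{N}u_{k,k}x_1\Big\|,\;\Big\|x_2-\sum_{k=0}^{N}v_{k,k}x_2\Big\|\Big\}.$$
Both entries tend to $0$ as $N\to\infty$, so the series $\sum_k w_{k,k}x$ converges to $x$ in norm.

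Finally, $\ran w_{0,0}=\ran u_{0,0}\oplus\ran v_{0,0}$ contains $\ran u_{0,0}\oplus\{0\}$, which is infinite-dimensional. Hence $X$ has uniform multiplicity infinity of infinite index. No step seems to be a real obstacle; the only point needing care is that the norm identity in (a) uses the maximum norm. Any equivalent norm on the direct sum would also work, at the cost of a constant.

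Note that interleaving the indices (for instance $w_{2j,2k}=u_{j,k}\oplus 0$ and $w_{2j+1,2k+1}=0\oplus v_{j,k}$) would not work, because the mixed units linking $X_1$ and $X_2$ would require isomorphisms between $\ran u_{0,0}$ and $\ran v_{0,0}$. The block-diagonal choice avoids this entirely.
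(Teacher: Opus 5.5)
Your proof is correct and is essentially the paper's own argument. The paper also defines $w_{i,j}=u_{i,j}\oplus v_{i,j}$, verifies the norm bound, the multiplication rule and the norm convergence of $\sum_j w_{j,j}x$ coordinatewise under the maximum norm, and gets $\dim\ran w_{0,0}=\infty$ from $\ran u_{0,0}\times\{0\}$. One slip: your opening word ``interleave'' misdescribes your own construction, which is block-diagonal, as your closing remark correctly explains.
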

\begin{proof}
Let $\{u_{i,j}\}_{i,j=0}^{\infty}$ and $\{v_{i,j}\}_{i,j=0}^{\infty}$ be a system of matrix units of $\B(X_1)$ and $\B(X_2)$, respectively.
Set $X=X_1\oplus X_2$.
For $i,j\geq 0$, let
$$w_{i,j}(x,y)=(u_{i,j}(x),v_{i,j}(y)),\quad (x,y)\in X.$$
It is not difficult to check that $\{w_{i,j}\}_{i,j=0}^{\infty}\subset \B(X)$.
We next prove that $\{w_{i,j}\}_{i,j=0}^{\infty}$ is a system of matrix units of $\B(X)$.
Suppose that there exist constants $C_1>0$ and $C_2>0$ such that
$\sup\{\|u_{i,j}\|:i,j\geq 0\}\leq C_1$ and $\sup\{\|v_{i,j}\|:i,j\geq 0\}\leq C_2$.
Let $C=\max\{C_1,C_2\}$.
For any $(x_1,x_2)\in X$, we have
\begin{align*}
\|w_{i,j}(x_1,x_2)\|_{X}
&=\max\{\|u_{i,j}(x_1)\|_{X_1},\|v_{i,j}(x_2)\|_{X_2}\}\\
&\leq\max\{\|u_{i,j}\|\|x_1\|_{X_1},\|v_{i,j}\|\|x_2\|_{X_2}\}\\
&\leq C\max\{\|x_1\|_{X_1},\|x_1\|_{X_2}\}=C\|(x_1,x_2)\|.
\end{align*}
It follows that $\sup\{\|w_{i,j}\|:i,j\geq 0\}\leq C$.

For any $i,j,k,l\geq 0$ and $(x_1,x_2)\in X$, we have
\begin{align*}
(w_{i,j}w_{k,l})(x_1,x_2)
&=w_{i,j}(w_{k,l}(x_1,x_2))\\
&=w_{i,j}(u_{k,l}(x_1),v_{k,l}(x_2))\\
&=\left(u_{i,j}(u_{k,l}(x_1)),v_{i,j}(v_{k,l}(x_2))\right)\\
&=\left((u_{i,j}u_{k,l})(x_1),(v_{i,j}v_{k,l})(x_2)\right)
\end{align*}
Since $u_{i,j}u_{k,l}=\delta_{jk}u_{i,l}$ and $v_{i,j}v_{k,l}=\delta_{jk} v_{i,l}$, we have
$$(w_{i,j}w_{k,l})(x_1,x_2)=(\delta_{jk}u_{i,l}(x_1),\delta_{jk}v_{i,l}(x_2))=\delta_{jk}(u_{i,l}(x_1),v_{i,l}(x_2))=\delta_{jk}w_{i,l}(x_1,x_2).$$
It follows that $w_{i,j}w_{k,l}=\delta_{jk}w_{i,l}$.
Finally, we prove that $\sum_{j=0}^{\infty} w_{j,j}x=x$ for any $x\in X$, i.e.,
$$\lim_{N\rightarrow \infty}\|x-\sum_{j=0}^{N} w_{j,j}x\|=0.$$
Notice that, for any $x=(x_1,x_2)\in X$, we have
$$\sum_{j=0}^{\infty} w_{j,j}(x_1,x_2)=\left(\sum_{j=0}^{\infty}u_{j,j}x_1,\sum_{j=0}^{\infty}v_{j,j}x_2\right).$$
Since $\sum_{j=0}^{\infty}u_{j,j}x_1=x_1$ and $\sum_{j=0}^{\infty}v_{j,j}x_2=x_2$, we have for any $\varepsilon > 0$,
there exists a positive integer $N_1$ such that
$\|x_1-\sum_{j=0}^{N}u_{j,j}x_1\|\leq \varepsilon$
when $N\geq N_1$.
And that
there exists a positive integer $N_2$ such that
$\|x_2-\sum_{j=0}^{N}v_{j,j}x_2\|\leq \varepsilon$
when $N\geq N_2$.
Let $N_0=\max\{N_1,N_2\}$, then
$$\|x-\sum_{j=0}^{N}w_{j,j}x\|=\max\{\|x_1-\sum_{j=0}^{N}u_{j,j}x_1\|,\|x_2-\sum_{j=0}^{N}v_{j,j}x_2\|\}\leq \varepsilon$$
when $N\geq N_0$.
In summary, $\{w_{i,j}\}_{i,j=0}^{\infty}$ is a system of matrix units of $\B(X)$.
Moreover,
let
$\ran u_{0,0}\times \{0_{X_2}\}=\{(a,0):a\in \ran u_{0,0}\}$, it is easy to check that $\ran u_{0,0}\times \{0_{X_2}\}\cong \ran u_{0,0}$.
Since $\dim\ran u_{0,0}=\infty$, we have $\dim(\ran u_{0,0}\times \{0_{X_2}\})=\infty$.
Thus,
$$\dim\ran w_{0,0}\geq \dim(\ran u_{0,0}\times \{0_{X_2}\})=\infty.$$
It follows that $\dim\ran w_{0,0}=\infty$. Hence, $X_1\oplus X_2$ has uniform multiplicity infinity of infinity index.
\end{proof}
\begin{proof}[Proof of Proposition~\ref{S3:cuni}]
Let $\mathbb{T}=\{\mathrm{e}^{2\pi\mathrm{i}t}:t\in[0,1]\}$, where $\mathrm{e}^{2\pi\mathrm{i}\cdot0}=\mathrm{e}^{2\pi\mathrm{i}\cdot1}=1$.
Let $z_0=1$ and $z_n=\mathrm{e}^{2\pi\mathrm{i}t_n}$ for $n\geq 1$, where $t_n=1-\frac{1}{2^n}$.
Then $z_n\rightarrow z_0$ when $n\rightarrow \infty$.
Let $I_0=\{\mathrm{e}^{2\pi\mathrm{i}t}:t\in[0,t_1]\}$ and
$I_n=\{\mathrm{e}^{2\pi\mathrm{i}t}:t\in[t_n,t_{n+1}]\}$
for $n\geq 1$.
Then $\bigcup_{n=0}^{\infty}I_n=\mathbb{T}$.
Let
$$C_0(\mathbb{T})=\{f\in C(\mathbb{T}):f(z_0)=0\},$$ we have
$C(\mathbb{T})\cong C_0(\mathbb{T})\oplus \mathbb{C}$.
For $n\geq 1$,
let
$$C_{0}(I_n)=\{f\in C(I_n):f(z_n)=f(z_{n+1})=0\}.$$
We claim that $C_0(\mathbb{T})\cong \left(\sideset{}{_0}\bigoplus\limits_{n=0}^{\infty} C_0(I_n)\right)\oplus c_0$.
To prove this, let $\psi:C_0(\mathbb{T})\rightarrow \left(\sideset{}{_0}\bigoplus\limits_{n=0}^{\infty} C_0(I_n)\right)\oplus c_0$,
$$\psi(g)=\left((g_0,g_1,\cdots),\{g(z_n)\}_{n=1}^{\infty}\right),$$
where
$$g_n(e^{2\pi i t})=g(e^{2\pi i t})-\left(g(z_n)+\frac{t-t_n}{t_{n+1}-t_n}\left(g(z_{n+1})-g(z_n)\right)\right),$$
for any $t\in [t_n,t_{n+1}]$.
Since
\begin{align*}
\|\psi(g)\|&=\max\{\sup_n\|g_n\|_{C_0(I_n)},\sup_n|g(z_n)|\}\\
&=\max\{\sup_n\sup_{z\in I_n}|g(z)|,\sup_n|g(z_n)|\}\leq 2\sup_{z\in\mathbb{T}}|g(z)|=2\|g\|
\end{align*}
and
\begin{align*}
\|g\|=\sup_n\max_{z\in I_n}|g(z)|
\leq&\sup_n\{\|g_n\|_{C_0(I_n)}+\max\{|g(z_n)|,|g(z_{n+1})|\}\}\\
\leq&\sup_n \|g_n\|_{C_0(I_n)}+\sup_n|g(z_n)|\leq 2\|\psi(g)\|,
\end{align*}
it follows that
$\frac{1}{2}\|g\|\leq\|\psi(g)\|\leq 2\|g\|$.
Therefore, $\psi$ is a linear homeomorphism.

Since $c_0\oplus \mathbb{C} \cong c_0$, we have $C(\mathbb{T})\cong \left(\sideset{}{_0}\bigoplus\limits_{n=0}^{\infty} C_0(I_n)\right)\oplus c_0$.
By Lemma \ref{L:direct}, we have $\sideset{}{_0}\bigoplus\limits_{n=0}^{\infty} C_0(I_n)$ has uniform multiplicity infinity of infinity index.
By Lemma \ref{S3:Duni}, we have $C(\mathbb{T})$ has uniform multiplicity infinity of infinity index.
\end{proof}

\section{Attainability of right closed subset of $[0,1]$}
In this section, we mainly present the proof of Theorem \ref{dlB}.
The following lemmas will be applied in later part.

\begin{lem}[see \cite{CNL2005}]\label{unc}
If $\{x_n\}_{n=0}^{\infty}$ is an unconditional basis of Banach space $X$ with basis constant $K$, then
for any $\xi=\{\lambda_j\}_{j=0}^{\infty}\in \ell^{\infty}$ and $x=\sum_{n=0}^{\infty}a_nx_n\in X$ have
$$\|M_{\xi}x\|\leq K\|\xi\|_{\infty}\|x\|,$$
where $M_{\xi}x=\sum_{j=0}^{\infty}\lambda_j a_jx_j$.
\end{lem}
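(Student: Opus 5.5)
The statement is Lemma~\ref{unc}: for an unconditional basis $\{x_n\}$ with constant $K$, the diagonal multiplier $M_\xi$ is well defined and satisfies $\|M_\xi x\|\le K\|\xi\|_\infty\|x\|$. I will read $K$ as the unconditional basis constant. That is, $K$ is the smallest constant with
\[
\Bigl\|\sum_{n\in A}a_nx_n\Bigr\|\le K\Bigl\|\sum_n a_nx_n\Bigr\|
\]
for every finite (hence every) set $A$ of indices. Its finiteness follows from unconditional convergence by a standard uniform boundedness argument. The plan has three steps: get the estimate for $0/1$ sequences, then for real sequences in $[0,1]$, then for complex sequences in the unit disc.

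\textbf{Step 1: reduce to finite sums.} It suffices to prove the estimate for finitely supported $x=\sum_{n\le N}a_nx_n$. Once $\|\sum_{n=m}^{N}\lambda_na_nx_n\|\le C\|\xi\|_\infty\|\sum_{n=m}^{N}a_nx_n\|$ holds, the partial sums of $\sum\lambda_na_nx_n$ are Cauchy. So $M_\xi x$ converges, and the bound passes to the limit.

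\textbf{Step 2: real $\xi$ with $0\le\lambda_n\le1$.} I will write $\lambda_n=\int_0^1 \chi_{\{t<\lambda_n\}}\,dt$. This gives
\[
\sum_n\lambda_na_nx_n=\int_0^1 P_{A_t}x\,dt,\qquad A_t=\{n:\lambda_n>t\},
\]
where $P_A$ is the coordinate projection onto $A$. Since $\|P_{A_t}x\|\le K\|x\|$, the triangle inequality for Bochner (here really finite Riemann) integrals gives $\|M_\xi x\|\le K\|x\|$. A general real $\xi$ with $|\lambda_n|\le1$ is a difference of two such sequences, which costs a factor $2$. The complex case, splitting into real and imaginary parts, costs another factor $2$.

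\textbf{Step 3: remove the extra factors.} The constant produced this way is $4K$ rather than the sharp $K$. To get exactly $K$ I would use convexity: the set of sequences $\xi$ with $\|M_\xi\|\le K$ is convex. Every finitely supported $\xi$ with $|\lambda_n|\le1$ is a convex combination of sequences whose entries are unimodular, and I would then need $\|M_\epsilon\|\le K$ for unimodular $\epsilon$.

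\textbf{Main obstacle.} Step 3 is where I expect trouble. Whether the sharp constant $K$ holds depends on which of the standard normalizations of "unconditional basis constant" the paper intends. If $K$ is defined as $\sup\|M_\epsilon\|$ over unimodular $\epsilon$, then the convexity argument gives exactly $K$. If instead $K$ controls only subset projections, I would settle for a constant $4K$ (or $2K$ in the real case). For the later application only the existence of some uniform constant matters, so that weaker bound suffices.
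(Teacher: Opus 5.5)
Your argument is correct. The paper gives no proof of this lemma (it is only cited), so your Steps 1--3 supply what the paper omits rather than follow a route it takes.

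Steps 1 and 2 are a complete proof that $\|M_\xi\|\le K_s\|\xi\|_\infty$ for nonnegative real $\xi$, where $K_s=\sup_A\|P_A\|$ is the suppression constant. They also give the factors $2$ and $4$ for real and complex $\xi$. Step 3 gives the sharp constant when $K=\sup\{\|M_\epsilon\|:|\epsilon_n|=1\}$. It can be shortened: every $\lambda$ with $|\lambda|\le1$ is the midpoint of two unimodular numbers, so coordinatewise $M_\xi=\frac12(M_\epsilon+M_{\epsilon'})$, with no finite-support reduction needed.

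Your hesitation in the ``main obstacle'' paragraph reflects a genuine defect of the statement, not of your method. The paper defines the basis constant as $\sup_n\|P_n\|$. Under that reading, or under your suppression-constant reading, the inequality with constant exactly $K$ is false. Take $\mathbb{C}^2$ with $\|(a,b)\|=\max(|a|,|b|,|a-b|)$, or its $\ell^\infty$-sum with $c_0$ for an infinite-dimensional example. Every coordinate projection has norm at most $1$, so both constants equal $1$. Yet $\xi=(1,-1)$ maps $(1,1)$, of norm $1$, to $(1,-1)$, of norm $2$.

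In the complex case even $\pm1$ signs are not enough. For $\|(a,b)\|=\max(|a+b|,|a-b|)$ all sign changes are isometries, but $\xi=(1,i)$ maps $(1,i)$ to $(1,-1)$ and multiplies the norm by $\sqrt2$. So the lemma holds as stated only with $K$ the unimodular-multiplier constant, which is exactly where your convexity step works. Otherwise $4K$ (or $2K$ over $\mathbb{R}$) is the right conclusion.

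For the paper's application, your Step 2 is the relevant piece. The multipliers used there are $r_n^k\in[0,1]$, so $\|A_0^k\|\le K_s$ with no loss. The renorming $\|x\|'=\sup_A\|P_Ax\|$ makes $K_s=1$ and really does give $\|A_0\|\le1$. The paper's renorming, which only makes $\sup_n\|P_n\|=1$, does not.
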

Recall that the {\em right closure} $\lceil K\rceil$ of a nonempty set $K$ of $\mathbb{R}$ is defined as $\{\sup\sigma:\emptyset\neq\sigma\subset K, \sigma \ \text{is bounded} \}$.
\begin{lem}[see {\cite[Lemma 3.7]{JiZhang}}]\label{rcl}
If $\sigma$ is a nonempty right closed subset of $\mathbb{R}$, then there exists a sequence $\{r_n:n\geq1\}$ such that whose right closure is $\sigma$.
\end{lem}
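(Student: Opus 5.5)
The plan is to prove Lemma~\ref{rcl} directly, using only the definition of right closure and the separability of $\mathbb{R}$. Let $\sigma\subset\mathbb{R}$ be nonempty and right closed, which means $\sup E\in\sigma$ for every nonempty bounded $E\subset\sigma$. I will take a countable set $D\subset\sigma$ that is dense in $\sigma$ in the ordinary topology. Such a $D$ exists because every subspace of the separable metric space $\mathbb{R}$ is separable. I then enumerate $D$ as $\{r_n:n\ge1\}$, repeating terms if $D$ is finite. The claim is that $\lceil\{r_n\}\rceil=\sigma$.

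The inclusion $\lceil\{r_n\}\rceil\subset\sigma$ is immediate. Any bounded nonempty $E\subset\{r_n\}$ is also a bounded nonempty subset of $\sigma$, so $\sup E\in\sigma$ by right closedness.

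For the reverse inclusion, fix $s\in\sigma$. The natural candidate is $E=\{r_n: r_n\le s\}$, since if $E$ is nonempty then $\sup E\le s$ and it remains to show $\sup E=s$. There are two cases.

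\emph{Case 1: $s$ is a limit from the left of points of $\sigma$.} That is, $(s-\varepsilon,s]\cap\sigma$ contains points other than $s$, or $s$ itself lies in $D$. Then density of $D$ in $\sigma$ gives points of $D$ in $(s-\varepsilon,s]$ for every $\varepsilon>0$, so $\sup E=s$.

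\emph{Case 2: $s$ is not approximable from the left within $\sigma$.} Here there is some $\varepsilon>0$ with $(s-\varepsilon,s)\cap\sigma=\emptyset$. If $s$ is also not approximable from the right, then $s$ is an isolated point of $\sigma$. Any dense subset of $\sigma$ must contain its isolated points, so $s\in D$ and $\{s\}$ is an admissible choice of $E$. The delicate sub-case is when $s$ is left-isolated but a limit from the right. In that sub-case density alone does not force $s\in D$, and no supremum of points of $D$ below $s$ can produce $s$.

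This sub-case is the main obstacle, and I will handle it by enlarging $D$. The set $L$ of left-isolated points of $\sigma$ is countable. To see this, assign to each $s\in L$ a rational number in the gap $(s-\varepsilon_s,s)$. These gaps are pairwise disjoint: if $s<s'$ are both in $L$, then $s\in\sigma$ lies outside $(s'-\varepsilon_{s'},s')$, so $s\le s'-\varepsilon_{s'}$. Hence the assignment is injective. I will therefore replace $D$ by $D\cup L$, which is still countable and still contained in $\sigma$, and enumerate this set as $\{r_n\}$. With this choice, every $s\in\sigma$ either belongs to $\{r_n\}$, in which case $E=\{s\}$ works, or is a left limit of points of $\{r_n\}$, in which case Case 1 applies.

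It follows that $\sigma\subset\lceil\{r_n\}\rceil$, and together with the first inclusion this gives $\lceil\{r_n\}\rceil=\sigma$. Boundedness is not an issue at any step, because every set $E$ used above is bounded above by $s$.
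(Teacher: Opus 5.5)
The paper gives no proof of this lemma. It is quoted from Ji--Zhang, so there is no in-text argument to compare against, and your proposal supplies a correct, self-contained one.

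Every $s\in\sigma$ falls into one of two cases.
\begin{itemize}
\item If $s$ is left-isolated in $\sigma$, it lies in $L$, and $s=\sup\{s\}$. The set $L$ is countable by your disjoint-gap injection into $\mathbb{Q}$.
\item Otherwise $s$ is a left accumulation point of $\sigma$. Then each relatively open set $(s-\varepsilon,s)\cap\sigma$ is nonempty and therefore meets $D$, so $s$ is a supremum of points of $D$.
\end{itemize}
Your observation that a dense subset can miss a point that is left-isolated but a limit from the right (e.g.\ $0$ in $\{0\}\cup\{1/n:n\ge 1\}$) is exactly why $L$ has to be added. Right closedness is needed only for the inclusion $\lceil\{r_n\}\rceil\subset\sigma$.

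One small fix is needed. The paper defines $\lceil K\rceil$ using suprema of \emph{bounded} nonempty subsets. If that means two-sided boundedness, then $E=\{r_n: r_n\le s\}$ need not be admissible when $\sigma$ is unbounded below. In that case your closing remark, that being bounded above by $s$ suffices, is not quite enough. Replace $E$ by $\{r_n: s-1<r_n\le s\}$. This set is nonempty and still has supremum $s$ by the same density argument. For $\sigma\subset[0,1]$, the only case used in Section 4, the issue does not arise.
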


To prove Theorem \ref{dlB}, we need make the following preparations.

For each right closed subset $\sigma$ of $[0,1]$ containing $1$,
by Lemma \ref{rcl}, one can pick a sequence $\{r_k:k\geq0\}$ so that $r_0=1$ and that the right closure of $\{\sqrt{r_k}: k\geq 0\}$ is $\sigma$.
Let $\{u_{i,j}\}_{i,j=0}^{\infty}$ be a system of matrix units of $\B(X)$.
Denote $X_0=\ran u_{0,0}$.
Suppose that $\{e_n^{(0)}\}_{n=0}^{\infty}$ is an unconditional basis with basis constant $K$ for subspace $X_0$.
For $x=\sum_{n=0}^{\infty}a_ne_n^{(0)}\in X_0$, define
$$A_0x=\sum_{n=0}^{\infty}r_n a_ne_n^{(0)}.$$
By Lemma \ref{unc}, we have $\|A_0x\|\leq K\|x\|$.
Since any Banach space with a basis can always be given an equivalent norm under which the basis constant is $1$, and the operators similar to each other have the same power set, one may assume that $K\equiv 1$.
Hence, $A_0\in \B(X_0)$ and $\|A_0\|\leq 1$.
Moreover, let
\begin{equation}\label{T:Reality}
T=\sum\limits_{n=0}^{\infty}\frac{u_{n+1,0}  A_0 u_{0,n}}{(2n+1)(2n+2)}.
\end{equation}

\begin{lem}\label{T:quani}
Let $T$ be as in $\eqref{T:Reality}$ above, then $T\in\B(X)$ and $T$ is quasinilpotent.
\end{lem}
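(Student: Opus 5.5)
We show that the series converges absolutely in norm, so $T\in\B(X)$, and then compute $T^m$ explicitly to get $\|T^m\|^{1/m}\to 0$.

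\textbf{Boundedness.} Put $C=\sup\{\|u_{j,k}\|:j,k\ge 0\}<\infty$ from Definition \ref{I:uniform}(a), and recall $\|A_0\|\le 1$. Each term satisfies
$$\|u_{n+1,0}A_0u_{0,n}\|\le C^2.$$
Since $\sum_n \frac{1}{(2n+1)(2n+2)}<\infty$, the series \eqref{T:Reality} converges absolutely in operator norm, and $\|T\|\le C^2\sum_n\frac{1}{(2n+1)(2n+2)}$. Here $A_0$ acts on $X_0=\ran u_{0,0}$, and $u_{0,n}$ maps into $\ran u_{0,0}$ because $u_{0,n}=u_{0,0}u_{0,n}$, so each term is well defined.

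\textbf{Powers of $T$.} Write $c_n=\frac{1}{(2n+1)(2n+2)}$ and $T_n=c_n u_{n+1,0}A_0u_{0,n}$. By Definition \ref{I:uniform}(b),
$$T_nT_k=c_nc_k\,u_{n+1,0}A_0u_{0,n}u_{k+1,0}A_0u_{0,k}=\delta_{n,k+1}\,c_nc_k\,u_{n+1,0}A_0u_{0,0}A_0u_{0,k}.$$
Since $A_0$ maps $X_0$ into $X_0$ and $u_{0,0}$ is the identity on $X_0$, this equals $\delta_{n,k+1}c_nc_k\,u_{n+1,0}A_0^2u_{0,k}$. By induction, and using that the sums converge absolutely so products may be expanded termwise,
$$T^m=\sum_{n=0}^\infty \Big(\prod_{i=0}^{m-1}c_{n+i}\Big)\,u_{n+m,0}A_0^m u_{0,n}.$$
The operator $T$ acts like a weighted backward/forward shift on the blocks.

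\textbf{Norm estimate.} Bounding each term separately would give $\|T^m\|\le C^2\sum_n\prod_i c_{n+i}$. The key point is that the sum over $n$ does not cost much. Since $c_{n+i}\le c_i$, each product satisfies $\prod_{i=0}^{m-1}c_{n+i}\le \prod_{i=0}^{m-1}c_i=\frac{1}{(2m)!}$. Summing over $n$ naively diverges, but we may sharpen this to
$$\prod_{i=0}^{m-1}c_{n+i}\le c_n\prod_{i=1}^{m-1}c_i=\frac{2\,c_n}{(2m)!}.$$
Hence
$$\|T^m\|\le C^2\,\frac{2}{(2m)!}\sum_n c_n=\frac{C'}{(2m)!}.$$
Therefore $\|T^m\|^{1/m}\le (C')^{1/m}/((2m)!)^{1/m}\to 0$, so the spectral radius is $0$ and $T$ is quasinilpotent.

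\textbf{Main obstacle.} The delicate step is justifying the composition formula for $T^m$ rigorously, namely the cancellation $u_{0,n}u_{k+1,0}=\delta_{n,k+1}u_{0,0}$ together with the fact that $u_{0,0}$ acts as the identity on $\ran A_0\subset X_0$. Absolute convergence handles the rearrangements. Note that the estimate above never uses condition (c), so it holds without any symmetry assumption on the matrix units.
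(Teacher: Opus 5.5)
Your proposal is correct and follows essentially the same route as the paper: absolute convergence in operator norm gives boundedness, the matrix-unit relations give $T^m=\sum_n \frac{(2n)!}{(2n+2m)!}\,u_{n+m,0}A_0^m u_{0,n}$, and $\|T^m\|=O(1/(2m)!)$ then forces spectral radius $0$. The only difference is cosmetic: you bound each coefficient directly by $2c_n/(2m)!$, whereas the paper gets the same decay from the recursion $\alpha_{m+1}\le \alpha_m/((2m+1)(2m+2))$.
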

\begin{proof}
For any $x=\sum_{n=0}^{\infty}u_{n,n}x\in X$, we have
$$Tx=\sum\limits_{n=0}^{\infty}\frac{u_{n+1,0} A_0 u_{0,n}x}{(2n+1)(2n+2)}.$$
It is easy to check that $T$ is a linear operator on $X$.
Let $\sup_{j,k}\|u_{j,k}\|=M$, we have
$$\|u_{n+1, 0} A_0 u_{0,n}\|\leq\|u_{n+1, 0}\|\|A_0\|\|u_{0,n}\|\leq M^2.$$
Thus
\begin{equation*}
\|T\|\leq \sum\limits_{n=0}^{\infty}\frac{1}{(2n+1)(2n+2)}
\|u_{n+1, 0}\|\|A_0\|\|u_{0,n}\|
\leq M^2\sum\limits_{n=1}^{\infty}\frac{1}{2 n^2}=\frac{M^2\pi^2}{12}.
\end{equation*}
It follows that $T\in\B(X)$.
For any $k\geq 1$, we have
$$T^k x=\sum\limits_{n=0}^{\infty}\frac{(2n)!u_{n+k, 0}A_0^k u_{0,n}x}{(2n+2k)!}.$$
So
$$\|T^k x\|\leq \sum\limits_{n=0}^{\infty}\frac{(2n)!}{(2n+2k)!}\|u_{n+1, 0}\|\|A_0\|^k\|u_{0,n}\|\|x\|\leq M^2\sum\limits_{n=0}^{\infty}\frac{(2n)!}{(2n+2k)!}\|x\|.$$
Denote
$\alpha_k=M^2\sum_{n=0}^{\infty}\frac{(2n)!}{(2n+2k)!}$, then $\|T^k\|\leq \alpha_k$.
Notice that, for any $k\geq 1$,
$$\alpha_{k+1}=\sum\limits_{n=0}^{\infty}\frac{M^{2}(2n)!}{(2n+2k+2)!}
\leq \frac{1}{(2k+1)(2k+2)}\sum\limits_{n=0}^{\infty}\frac{M^{2}(2n)!}{(2n+2k)!}
= \frac{\alpha_{k}}{(2k+1)(2k+2)}.$$
Therefore, we have $\alpha_k\leq \frac{\alpha_1}{(2k)!}$ for any $k\geq 2$.
It follows that
\begin{align*}
\lim\limits_{k\rightarrow\infty}\|T^k\|^{\frac{1}{k}}
\leq\lim\limits_{k\rightarrow\infty}(\alpha_{k})^{\frac{1}{k}}
\leq\lim\limits_{k\rightarrow\infty}\left(\frac{\alpha_1}{(2k)!}\right)^{\frac{1}{k}}=0.
\end{align*}
Thus, $\lim_{k\rightarrow\infty}\|T^k\|^{\frac{1}{k}}=0$.
We have $r(T)=0$ and $T$ is quasinilpotent.
\end{proof}

\begin{lem}
Let $T$ be as in Lemma \ref{T:quani}, then
\begin{equation}\label{S4:lnorm}
\|(\lambda-T)^{-1}\|\leq \frac{1}{|\lambda|}+ \frac{M^2}{|\lambda|}e^{\frac{1}{\sqrt{|\lambda|}}}+
\frac{M^2}{(\sqrt{|\lambda|}+1)|\lambda|}e^{\frac{1}{\sqrt{|\lambda|}}}, \ \ \forall\lambda\neq 0,
\end{equation}
where $M=\sup\{\|u_{j,k}\|:j,k\geq 0\}$.
\end{lem}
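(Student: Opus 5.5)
Since $T$ is quasinilpotent (Lemma \ref{T:quani}), the Neumann series
$$(\lambda-T)^{-1}=\sum_{k=0}^{\infty}\frac{T^k}{\lambda^{k+1}}$$
converges in norm for every $\lambda\neq0$. It therefore suffices to bound $\|T^k\|$ explicitly and sum the resulting series.

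The $k=0$ term contributes $\|I\|/|\lambda|=1/|\lambda|$, which is the first summand in \eqref{S4:lnorm}.

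For $k\ge1$ I will use the formula from the proof of Lemma \ref{T:quani}:
$$T^k x=\sum_{n\ge0}\frac{(2n)!}{(2n+2k)!}\,u_{n+k,0}A_0^k u_{0,n}x,$$
together with $\|A_0\|\le1$ and $\|u_{j,l}\|\le M$. This gives
$$\|T^k\|\le M^2\sum_{n\ge0}\frac{(2n)!}{(2n+2k)!}.$$
I will split off the $n=0$ term, which is $M^2/(2k)!$, and bound the remaining terms separately.

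For the tail with $n\ge1$, the plan is to use
$$\frac{(2n)!}{(2n+2k)!}\le \frac{1}{(2k)!}\cdot\frac{(2n)!\,(2k)!}{(2n+2k)!}=\frac{1}{(2k)!\binom{2n+2k}{2k}}.$$
Then I need to control $\sum_{n\ge1}\binom{2n+2k}{2k}^{-1}$. Since $\binom{2n+2k}{2k}\ge \binom{2k+2}{2}\binom{2n+2k}{2k}/\binom{2k+2}{2}$, a cruder route is simpler: the sum over $n\ge1$ is at most $\frac{1}{(2k+1)!}\sum_{n\ge1}\frac{(2n)!(2k+1)!}{(2n+2k)!}$, and the inner sum is bounded by a constant at most $1$.

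The target shape is
$$\|T^k\|\le \frac{M^2}{(2k)!}+\frac{M^2}{(2k+1)!}.$$
Once this holds, setting $t=1/\sqrt{|\lambda|}$ gives
$$\sum_{k\ge1}\frac{1}{(2k)!\,|\lambda|^{k+1}}=\frac{1}{|\lambda|}\sum_{k\ge1}\frac{t^{2k}}{(2k)!}\le\frac{e^{t}}{|\lambda|},$$
and
$$\sum_{k\ge1}\frac{1}{(2k+1)!\,|\lambda|^{k+1}}=\frac{1}{|\lambda|\,t}\sum_{k\ge1}\frac{t^{2k+1}}{(2k+1)!}\le\frac{\sqrt{|\lambda|}\,e^{t}}{|\lambda|}.$$

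The bound as stated has the factor $\frac{1}{\sqrt{|\lambda|}+1}$ instead of $\sqrt{|\lambda|}$ in the third term. So the tail should be compared more carefully, for example via
$$\frac{(2n)!}{(2n+2k)!}\le\frac{1}{(2k)!}\cdot\frac{1}{(2k+1)^n},$$
which would give a geometric factor.

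\textbf{Main obstacle.} The difficulty is choosing exactly the right elementary inequality for $\sum_n (2n)!/(2n+2k)!$ so that, after summing against $|\lambda|^{-(k+1)}$, the constants combine into precisely the three displayed terms, including the $(\sqrt{|\lambda|}+1)^{-1}$ factor.

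What I would try first is to compute $\|T^k\|$ in the form $\sum_n c_{n,k}$ and interchange the order of summation. That is, I would write
$$\|(\lambda-T)^{-1}\|-\frac{1}{|\lambda|}\le M^2\sum_{n\ge0}(2n)!\sum_{k\ge1}\frac{t^{2k}}{(2n+2k)!\,|\lambda|}.$$
For each fixed $n$ the inner sum is a tail of a series for $\cosh t$, bounded by $t^{-2n}e^{t}$ times a factor. Summing the result over $n$ should produce the factor $\frac{1}{1+\sqrt{|\lambda|}}$-type term, although any bound of the form $C/|\lambda|\,e^{1/\sqrt{|\lambda|}}$ would suffice for the later application.
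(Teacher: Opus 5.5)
\textbf{There is a genuine gap: the proposal never proves the estimate it needs for the $n\ge1$ tail.} Your setup matches the paper's: the Neumann series, the formula for $T^k$, the bound $\|T^k\|\le M^2\sum_{n\ge0}\frac{(2n)!}{(2n+2k)!}$, and splitting off $n=0$. But you end with the tail step flagged as an unresolved obstacle, and the concrete inequalities you offer for it are false or incomplete.
\begin{itemize}
\item The claim $\sum_{n\ge1}\frac{(2n)!\,(2k+1)!}{(2n+2k)!}\le 1$ fails at $k=1$. There the sum equals $3\sum_{n\ge1}\frac{2}{(2n+1)(2n+2)}=3(2\ln 2-1)\approx 1.16$.
\item The geometric bound $\frac{(2n)!}{(2n+2k)!}\le\frac{1}{(2k)!\,(2k+1)^n}$ is false, because $\binom{2n+2k}{2k}$ grows only polynomially in $n$. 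For $k=1$, $n=4$ the left side is $1/90$ and the right side is $1/162$.
\item The interchange-of-sums idea is only sketched. The inner sum is $(2n)!\,t^{-2n}\sum_{m>n}\frac{t^{2m}}{(2m)!}$. Bounding that tail by $\cosh t$, as you suggest, gives terms $(2n)!\,t^{-2n}\cosh t$, whose sum over $n$ diverges.
\end{itemize}
So no valid bound on the tail is actually established.

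\textbf{The missing ingredient is a bound uniform in $k$.} Write $P(n,k)=\frac{(2n)!\,(2k)!}{(2n+2k)!}=\prod_{j=1}^{2n}\frac{j}{2k+j}$. Every factor decreases in $k$, so $P(n,k)\le P(n,1)=\frac{2}{(2n+1)(2n+2)}$, and hence $\sum_{n\ge1}P(n,k)\le 2\ln 2-1<1$. The $n\ge1$ tail is therefore at most $M^2/(2k)!$, not $M^2/(2k+1)!$.

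The paper then writes $|\lambda|^{-(k+1)}=\frac{1}{(\sqrt{|\lambda|}+1)|\lambda|}\bigl(\sqrt{|\lambda|}^{\,-(2k-1)}+\sqrt{|\lambda|}^{\,-2k}\bigr)$. Using $\frac{1}{(2k)!}\le\frac{1}{(2k-1)!}$, the odd and even powers merge into $\sum_{j\ge1}\frac{1}{j!\,\sqrt{|\lambda|}^{\,j}}\le e^{1/\sqrt{|\lambda|}}$. That is exactly where the factor $(\sqrt{|\lambda|}+1)^{-1}$ comes from.

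\textbf{You also do not need to reproduce the three displayed terms one by one.} Any upper bound dominated by the right-hand side suffices. With the tail estimate above, $\|T^k\|\le 2M^2/(2k)!$ for $k\ge1$. Setting $s=|\lambda|^{-1/2}$, this gives $\sum_{k\ge1}\frac{\|T^k\|}{|\lambda|^{k+1}}\le\frac{M^2}{|\lambda|}\bigl(e^{s}+e^{-s}-2\bigr)<\frac{M^2}{|\lambda|}e^{s}$. So $\|(\lambda-T)^{-1}\|\le\frac{1}{|\lambda|}+\frac{M^2}{|\lambda|}e^{1/\sqrt{|\lambda|}}$, which already implies the lemma.
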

\begin{proof}
For $\lambda \neq 0$, we have
$$\|(\lambda-T)^{-1}\|=
\| \sum\limits_{k=0}^{\infty}\frac{T^k}{\lambda^{k+1}}\|\leq\| \sum\limits_{k=1}^{\infty}\frac{T^k}{\lambda^{k+1}}\|+\frac{1}{|\lambda|}.$$
For $\|\sum\limits_{k=1}^{\infty}\frac{T^k}{\lambda^{k+1}}\|$, we have
\begin{align*}
\|\sum\limits_{k=1}^{\infty}\frac{T^k}{\lambda^{k+1}}\|
&=\|\sum\limits_{k=1}^{\infty}\sum\limits_{n=0}^{\infty}
\frac{(2n)!u_{n+k, 0}A_0^k u_{0,n}}{(2n+2k)!\lambda^{k+1}}\|
=\|\sum\limits_{n=0}^{\infty}\sum\limits_{k=1}^{\infty}
\frac{(2n)!u_{n+k, 0}A_0^k u_{0,n}}{(2n+2k)!\lambda^{k+1}}\|\\
&\leq\|\sum\limits_{n=1}^{\infty}\sum\limits_{k=1}^{\infty}
\frac{(2n)!u_{n+k, 0}A_0^k u_{0,n}}{(2n+2k)!\lambda^{k+1}}\|
+\|\sum\limits_{k=1}^{\infty}
\frac{u_{n+k, 0}A_0^k u_{0,n}}{(2k)!\sqrt \lambda^{2k+2}}\|\\
&\leq\|\sum\limits_{n=1}^{\infty}\sum\limits_{k=1}^{\infty}
\frac{(2n)!u_{n+k, 0}A_0^k u_{0,n}}{(2n+2k)!\lambda^{k+1}}\|+\frac{M^2}{|\lambda|} \sum\limits_{k=1}^{\infty}\frac{1}{(2k)!\sqrt{|\lambda|}^{2k}}\\
&\leq\|\sum\limits_{n=1}^{\infty}\sum\limits_{k=1}^{\infty}
\frac{(2n)!u_{n+k, 0}A_0^k u_{0,n}}{(2n+2k)!\lambda^{k+1}}\|
+\frac{M^2}{|\lambda|}e^{\frac{1}{\sqrt{|\lambda|}}}.
\end{align*}
For
$\|\sum\limits_{n=1}^{\infty}\sum\limits_{k=1}^{\infty}
\frac{(2n)!u_{n+k, 0}A_0^k u_{0,n}}{(2n+2k)!\lambda^{k+1}}\|$,
we have
\begin{align*}
&\|\sum\limits_{n=1}^{\infty}\sum\limits_{k=1}^{\infty}
\frac{(2n)!u_{n+k, 0}A_0^k u_{0,n}}{(2n+2k)!\lambda^{k+1}}\|
\leq M^2\sum\limits_{n=1}^{\infty}\sum\limits_{k=1}^{\infty}\frac{(2n)!}{(2n+2k)!|\lambda|^{k+1}}\\
&=
\frac {M^2}{(\sqrt{|\lambda|}+1)|\lambda|}
\sum\limits_{n=1}^{\infty}\sum\limits_{k=1}^{\infty}
\frac{(\sqrt{|\lambda|}+1)(2n)!}{(2n+2k)!|\lambda|^{k}}\\
&=
\frac{M^2}{(\sqrt{|\lambda|}+1)|\lambda|}
\sum\limits_{n=1}^{\infty}\sum\limits_{k=1}^{\infty}
\frac{\sqrt{|\lambda|}(2n)!}{(2n+2k)!|\lambda|^{k}}+\frac{(2n)!}{(2n+2k)!|\lambda|^{k}}\\
&=
\frac{M^2}{(\sqrt{|\lambda|}+1)|\lambda|}
\sum\limits_{n=1}^{\infty}\sum\limits_{k=1}^{\infty}\frac{(2n)!}{(2n+2k)!\sqrt{|\lambda|}^{2k-1}}+\frac{(2n)!}{(2n+2k)!\sqrt{|\lambda|}^{2k}}\\
&\leq
\frac{M^2}{(\sqrt{|\lambda|}+1)|\lambda|}
\sum\limits_{n=1}^{\infty}\sum\limits_{k=1}^{\infty}\left(\frac{1}{(2k)!\sqrt{|\lambda|}^{2k-1}}+\frac{1}{(2k)!\sqrt{|\lambda|}^{2k}}\right)\left(\prod\limits_{j=1}^{2n}\frac{j}{2k+j}\right).
\end{align*}
For $n,k\geq 1$, denote $P(n,k)=\prod\limits_{j=1}^{2n}\frac{j}{2k+j}$.
Fixed $n$, we have
\begin{align*}
P(n,k)\leq P(n,1)
&=\prod\limits_{j=1}^{2n}\frac{j}{2+j}
=\frac{1 \cdot 2 \cdot 3 \cdots (2n)}{3 \cdot 4 \cdot 5 \cdots (2n+2)}\\
&=\frac{2}{(2n+1)(2n+2)}.
\end{align*}
Hence,
\begin{align*}
&\|\sum\limits_{n=1}^{\infty}\sum\limits_{k=1}^{\infty}
\frac{(2n)!u_{n+k, 0}A_0^k u_{0,n}}{(2n+2k)!\lambda^{k+1}}\|\\
&\leq
\frac{M^2}{(\sqrt{|\lambda|}+1)|\lambda|}
\sum\limits_{n=1}^{\infty}\sum\limits_{k=1}^{\infty}\left(\frac{1}{(2k)!\sqrt{|\lambda|}^{2k-1}}+\frac{1}{(2k)!\sqrt{|\lambda|}^{2k}}\right)\left(\prod\limits_{j=1}^{2n}\frac{j}{2k+j}\right)\\
&\leq
\frac{M^2}{(\sqrt{|\lambda|}+1)|\lambda|}
\sum\limits_{n=1}^{\infty}\frac{2}{(2n+1)(2n+2)}\sum\limits_{k=1}^{\infty}\left(\frac{1}{(2k-1)!\sqrt{|\lambda|}^{2k-1}}+\frac{1}{(2k)!\sqrt{|\lambda|}^{2k}}\right)\\
&\leq
\frac{M^2}{(\sqrt{|\lambda|}+1)|\lambda|}
\sum\limits_{k=1}^{\infty}\frac{1}{k!\sqrt{|\lambda|}^{k}}\\
&\leq
\frac{M^2}{(\sqrt{|\lambda|}+1)|\lambda|}e^{\frac{1}{\sqrt{|\lambda|}}}.
\end{align*}
Therefore,
\begin{equation*}
\|(\lambda-T)^{-1}\|\leq \frac{1}{|\lambda|}+ \frac{M^2}{|\lambda|}e^{\frac{1}{\sqrt{|\lambda|}}}+
\frac{M^2}{(\sqrt{|\lambda|}+1)|\lambda|}e^{\frac{1}{\sqrt{|\lambda|}}},
\end{equation*}
which completes the proof.
\end{proof}

\begin{lem}\label{T:normvector}
Let $T$ be as in Lemma \ref{T:quani}.
For each $m\geq 0$, then there exists a constant $c>0$ such that
\begin{equation}\label{T:ej}
\|(|\lambda|-T)^{-1}e_j^{(m)}\|\geq
\frac{1}{c}\frac{(2m)!\sqrt{|\lambda|}|\lambda|^{m}}{\sqrt{r_j}(r_j^{m+1}+r_j^{m+1}\sqrt{|\lambda|})}
\left(e^{\frac{\sqrt{r_j}}{\sqrt{|\lambda|}}}-\sum_{i=0}^{2m+1}\frac{\sqrt{r_j}^i}{i!\sqrt{|\lambda|}^{i}}\right),
\end{equation}
where $e_j^{(m)}=u_{m,0}e_{j}^{(0)}$.
\end{lem}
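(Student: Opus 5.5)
The plan is to compute $(|\lambda|-T)^{-1}e_j^{(m)}$ explicitly from the Neumann series. Then I would bound its norm from below by a single block coordinate, and finally compare the largest term of an exponential-type series with its tail. Write $t=|\lambda|$ and $s=\sqrt{r_j/t}$.

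\textbf{Step 1 (explicit resolvent).} From the formula for $T^k$ in the proof of Lemma \ref{T:quani}, and using $u_{0,m}u_{m,0}=u_{0,0}$, $u_{0,0}e_j^{(0)}=e_j^{(0)}$ and $A_0e_j^{(0)}=r_je_j^{(0)}$, we get
$$T^ke_j^{(m)}=\frac{(2m)!\,r_j^k}{(2m+2k)!}\,u_{m+k,0}e_j^{(0)}.$$
Hence
$$(t-T)^{-1}e_j^{(m)}=\sum_{k\ge0}\frac{(2m)!\,r_j^k}{(2m+2k)!\,t^{k+1}}\,u_{m+k,0}e_j^{(0)}.$$
Each coefficient is positive, and the $k$-th coefficient equals $\frac{(2m)!\,t^{m}}{r_j^{m}\,t}\cdot\frac{s^{2m+2k}}{(2m+2k)!}$.

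\textbf{Step 2 (one block).} Apply $u_{0,m+k}$, which has norm at most $M$. Since $u_{0,m+k}u_{m+l,0}=\delta_{kl}u_{0,0}$, this kills every term but one. With the basis normalized so that $\|e_j^{(0)}\|=1$, we obtain
$$\|(t-T)^{-1}e_j^{(m)}\|\ \ge\ \frac1M\cdot\frac{(2m)!\,t^{m-1}}{r_j^{m}}\ \max_{k\ge1}\frac{s^{2m+2k}}{(2m+2k)!}.$$
I use $k\ge1$, so the maximum runs over even indices $i\ge 2m+2$.

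\textbf{Step 3 (tail versus maximal even term).} This is the main obstacle. I need
$$\sum_{i\ge 2m+2}\frac{s^i}{i!}\ \le\ c_0\,(1+s)\max_{\substack{i\ge 2m+2\\ i\ \mathrm{even}}}\frac{s^i}{i!}$$
with $c_0$ independent of $s$. First, each odd term is controlled by an adjacent even term: $s^{i}/i!\le \max\{s^{i-1}/(i-1)!,\ s^{i+1}/(i+1)!\}\cdot\max(1,(i+1)/s)$. I expect this $\max(1,(i+1)/s)$ factor to be the fiddly part; the cleanest route is to split the index range at $i\approx 2s$. Next, for the even terms, the ratio of consecutive even terms is $s^2/((i+1)(i+2))$. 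It exceeds $1/4$ only on a window of indices of length $O(1+s)$ around $i\approx s$, and outside that window the even terms decay geometrically. This yields the bound $c_0(1+s)$ times the maximal even term. The restriction $i\ge 2m+2$ only removes terms, so the same argument applies to the shifted tail. If $s$ is small, all the terms are dominated by the first one, and the constant can absorb this for fixed $m$.

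\textbf{Step 4 (assemble).} Combining Steps 2 and 3 gives
$$\|(t-T)^{-1}e_j^{(m)}\|\ \ge\ \frac{(2m)!\,t^{m-1}}{M c_0\,r_j^{m}(1+s)}\Bigl(e^{s}-\sum_{i=0}^{2m+1}\frac{s^i}{i!}\Bigr).$$
Substituting $s=\sqrt{r_j}/\sqrt t$ gives $\frac{t^{m-1}}{r_j^m(1+s)}=\frac{\sqrt t\,t^m}{\sqrt{r_j}\,r_j^{m}\,t\,(\sqrt t+\sqrt{r_j})}$. To turn this into the form of \eqref{T:ej}, I would use $r_j\le1$ and $t$ bounded. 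The stated right-hand side is weaker by factors of the type $r_j^{-1}$ and $(1+\sqrt t)$, and these can be accounted for by comparing the denominators $t(\sqrt t+\sqrt{r_j})$ and $r_j(1+\sqrt t)$ in the relevant range, enlarging $c$ as needed. I would choose $c=c(m)$, depending on $M$, $c_0$ and $m$ only.
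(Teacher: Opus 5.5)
Steps 1--3 are sound, and the Step 3 estimate is true with an absolute $c_0$. Your route differs from the paper's. The paper does not read off a single block. It uses one functional, $\varphi_j^{(m)}=\sum_{n\ge0}f_j^{(n+m)}/((2n+1)(2n+2))$, whose weights shift the factorials by two. The $k$-th resolvent coefficient then contributes at least $\frac{(2m)!t^m}{r_j^{m+1}}\cdot\frac{s^{2k+2m+2}}{(2k+2m+2)!}$. In particular the $k=0$ coefficient is kept, landing on $i=2m+2$. Writing $E$ and $O$ for the even and odd parts of the tail, the functional sees all of $E$ at once. The trivial inequality $(1+s)E\ge E+O$ then produces $e^s-\sum_{i\le 2m+1}s^i/i!$, so no sum-versus-maximum estimate like your Step 3 is needed. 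Your version pays for Step 3, but for small $|\lambda|$ it gives a bound stronger than \eqref{T:ej} by a factor of order $r_j/|\lambda|$.

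The gap is in Step 4. First, a slip: the identity should read $\frac{t^{m-1}}{r_j^m(1+s)}=\frac{\sqrt t\,t^m}{r_j^m\,t(\sqrt t+\sqrt{r_j})}$, without the extra $\sqrt{r_j}$. With this correction, your bound divided by the right-hand side of \eqref{T:ej} equals $\frac{c}{Mc_0}Q(t)$, where $Q(t)=\frac{r_j^{3/2}(1+\sqrt t)}{t(\sqrt t+\sqrt{r_j})}$ and $t=|\lambda|$.

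\emph{Large $|\lambda|$.} Here $Q(t)\to0$ as $t\to\infty$. Having discarded $k=0$, your lower bound is $O(t^{-2})$. The right-hand side of \eqref{T:ej}, however, is asymptotic to $\frac{1}{c(2m+1)(2m+2)\sqrt{r_j}\,t}$. No choice of $c$ fixes this, and ``$t$ bounded'' is a restriction the lemma does not contain.

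\emph{Dependence of $c$ on $j$.} Your claim that $c$ depends only on $M,c_0,m$ fails on any fixed range of $t$. For fixed $t$ and $r_j\to0$, the right-hand side of \eqref{T:ej} is of order $\frac{1}{c\sqrt{r_jt}(1+\sqrt t)}\to\infty$. The left-hand side, by contrast, stays at most $M\|(t-T)^{-1}\|$. So $c$ must depend on $j$, as the paper's $c=\|\varphi_j^{(m)}\|$ formally does. (The paper's chain itself loses a factor $\sqrt{r_j}$ where $\sqrt{r_j}^{2k+2+2m}$ should read $\sqrt{r_j}^{2k+3+2m}$.)

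\emph{Repair.} Split at $t=r_j$.
\begin{itemize}
\item For $0<t\le r_j$ one has $Q(t)\ge r_j/(2t)\ge\frac12$, so $c=2Mc_0$ works there, uniformly in $j$.
\item For $t\ge r_j$, i.e.\ $s\le1$, keep the $k=0$ block. This gives $\|(t-T)^{-1}e_j^{(m)}\|\ge 1/(Mt)$. Meanwhile $e^s-\sum_{i\le 2m+1}s^i/i!\le e\,s^{2m+2}/(2m+2)!$ bounds the right-hand side by $\frac{e}{c\sqrt{r_j}\,t}$.
\end{itemize}
Hence $c=\max\{2Mc_0,\,eM/\sqrt{r_j}\}$ works for all $\lambda\ne0$. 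Since the lemma is used only as $\lambda\to0$ with $j$ fixed, this repaired version is all that Propositions~\ref{Kej} and~\ref{T:subsetr} need.
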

\begin{proof}
For $n\geq 0$, let $\phi_n (\sum_{j=0}^{\infty}\alpha_j e_j^{(0)})=\alpha_n$ for any $\sum_{j=0}^{\infty}\alpha_j e_j^{(0)}\in X_0$.
Since the basis constant $K\equiv1$, we have $\|\phi_n\|\leq2$.
For each $n,j\geq 0$, let $f_n^{(j)}(x)=\phi_n(u_{0,j}x)$ for any $x\in X$.
Since $\|f_n^{(j)}(x)\|\leq \|\phi_n\|\|u_{0,j}\|\|x\|\leq 2M \|x\|$,
we have $\|f_n^{(j)}\|\leq 2M$.
Then $f_n^{(j)}\in X^{\prime}$ for every $n,j\geq 0$.
Moreover, for $m\geq 0$,
let
$$\varphi_j^{(m)}=\sum_{n=0}^{\infty}\frac{f_j^{(n+m)}}{(2n+1)(2n+2)}.$$
Since
$$\|\varphi_j^{(m)}\|\leq \sum_{n=0}^{\infty}\frac{1}{(2n+1)(2n+2)}\|f_j^{(n+m)}\|
\leq 2M\sum_{n=1}^{\infty}\frac{1}{2n^2}=\frac{M\pi^2}{6}$$
and $\varphi_j^{(m)}(e_{j}^{(m)})=\frac{1}{2}$, we have $\varphi_j^{(m)}\in X^{\prime}$ and $\varphi_j^{(m)}\neq 0$ for every $m,j\geq 0$.
Then there exists a positive number $c$ with $0<c\leq\frac{M\pi^2}{6}$ such that $c=\|\varphi_j^{(m)}\|$.
For $\lambda\neq 0$, we have
\begin{equation*}\label{T:norej}
(\lambda-T)^{-1}e_j^{(m)}
=\sum_{k=0}^{\infty}\sum_{n=0}^{\infty}\frac{(2n)!u_{k+n,0}A_0^ku_{0,n}e_j^{(m)}}{(2k+2n)!\lambda^{k+1}}
=\sum_{k=0}^{\infty}\frac{(2m)!r_j^k e_j^{(k+m)}}{(2k+2m)!\lambda^{k+1}}.
\end{equation*}
Since
\begin{align*}
&\varphi_j^{(m)}\left((|\lambda|-T)^{-1}e_j^{(m)}\right)
=\sum\limits_{k=0}^{\infty}\frac{(2m)!{r_j}^k}{(2k+2m+2)!|\lambda|^{k+1}}\\
&=\frac{(2m)!}{(r_j^{m+1}+r_j^{m+1}\sqrt{|\lambda|})}
\sum\limits_{k=0}^{\infty}\frac{(r_j^{m+1}+r_j^{m+1}\sqrt{|\lambda|}){r_j}^k}{(2k+2m+2)!|\lambda|^{k+1}}\\
&=\frac{(2m)!|\lambda|^{m}}{(r_j^{m+1}+r_j^{m+1}\sqrt{|\lambda|})}
\sum\limits_{k=0}^{\infty}\left(\frac{{r_j}^{k+1+m}}{(2k+2m+2)!|\lambda|^{k+1+m}}
+\frac{\sqrt{|\lambda|}{r_j}^{k+1+m}}{(2k+2m+2)!|\lambda|^{k+1+m}}\right)\\
&=\frac{(2m)!|\lambda|^{m}}{\sqrt{r_j}(r_j^{m+1}+r_j^{m+1}\sqrt{|\lambda|})}
\sum\limits_{k=0}^{\infty}\left(\frac{(\sqrt{{r_j}})^{2k+3+2m}}{(2k+2m+2)!(\sqrt{|\lambda|})^{2k+2+2m}}+
\frac{(\sqrt{{r_j}})^{2k+2+2m}}{(2k+2m+2)!(\sqrt{|\lambda|})^{2k+1+2m}}\right)\\
&=\frac{(2m)!\sqrt{|\lambda|}|\lambda|^{m}}{\sqrt{r_j}(r_j^{m+1}+r_j^{m+1}\sqrt{|\lambda|})}
\sum\limits_{k=0}^{\infty}\left(\frac{(\sqrt{{r_j}})^{2k+3+2m}}{(2k+2m+2)!(\sqrt{|\lambda|})^{2k+3+2m}}+
\frac{(\sqrt{{r_j}})^{2k+2+2m}}{(2k+2m+2)!(\sqrt{|\lambda|})^{2k+2+2m}}\right)\\
&\geq\frac{(2m)!\sqrt{|\lambda|}|\lambda|^{m}}{\sqrt{r_j}(r_j^{m+1}+r_j^{m+1}\sqrt{|\lambda|})}
\sum\limits_{k=0}^{\infty}\left(\frac{(\sqrt{{r_j}})^{2k+3+2m}}{(2k+2m+3)!(\sqrt{|\lambda|})^{2k+3+2m}}+
\frac{(\sqrt{{r_j}})^{2k+2+2m}}{(2k+2m+2)!(\sqrt{|\lambda|})^{2k+2+2m}}\right)\\
&=
\frac{(2m)!\sqrt{|\lambda|}|\lambda|^{m}}{\sqrt{r_j}(r_j^{m+1}+r_j^{m+1}\sqrt{|\lambda|})}
\left(e^{\frac{\sqrt{r_j}}{\sqrt{|\lambda|}}}-\sum_{i=0}^{2m+1}\frac{\sqrt{r_j}^i}{i!\sqrt{|\lambda|}^{i}}\right),
\end{align*}
it follows that
\begin{align*}
\|(|\lambda|-T)^{-1}e_j^{(m)}\|
&\geq
\frac{1}{\|\varphi_j^{(m)}\|}\left|\varphi_j\left((|\lambda|-T)^{-1}e_j^{(m)}\right)\right|\\
&\geq
\frac{1}{c}\frac{(2m)!\sqrt{|\lambda|}|\lambda|^{m}}{\sqrt{r_j}(r_j^{m+1}+r_j^{m+1}\sqrt{|\lambda|})}
\left(e^{\frac{\sqrt{r_j}}{\sqrt{|\lambda|}}}-\sum_{i=0}^{2m+1}\frac{\sqrt{r_j}^i}{i!\sqrt{|\lambda|}^{i}}\right).
\end{align*}
The proof is complete.
\end{proof}

\begin{lem}\label{T:normal}
Let $T$ be as in Lemma \ref{T:quani},
then
\begin{equation}\label{T:leqnorm}
\|(|\lambda|-T)^{-1}\|\geq \frac{1}{c}\frac{\sqrt{|\lambda|}}{1+\sqrt{|\lambda|}}
\left(e^{\frac{1}{\sqrt{|\lambda|}}}-1-\frac{1}{\sqrt{|\lambda|}}\right),\ \ \forall\lambda\neq 0.
\end{equation}
\end{lem}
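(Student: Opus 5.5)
This is the special case $m=0$, $j=0$ of Lemma \ref{T:normvector}. The choice $r_0=1$ was made precisely so that this case gives the right-hand side of \eqref{T:leqnorm}.

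Apply Lemma \ref{T:normvector} with $m=0$ and $j=0$. Since $r_0=1$ we have $\sqrt{r_0}=r_0^{m+1}=1$ and $(2m)!=|\lambda|^m=1$, and the subtracted partial sum runs over $i=0,1$, so it equals $1+\frac{1}{\sqrt{|\lambda|}}$. Inequality \eqref{T:ej} therefore becomes
$$\|(|\lambda|-T)^{-1}e_0^{(0)}\|\geq \frac{1}{c}\frac{\sqrt{|\lambda|}}{1+\sqrt{|\lambda|}}\left(e^{\frac{1}{\sqrt{|\lambda|}}}-1-\frac{1}{\sqrt{|\lambda|}}\right),$$
where $e_0^{(0)}=u_{0,0}e_0^{(0)}$.

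Next, use $\|(|\lambda|-T)^{-1}\|\geq \|(|\lambda|-T)^{-1}e_0^{(0)}\|/\|e_0^{(0)}\|$. This gives the claim provided $\|e_0^{(0)}\|\le 1$. I will take the basis $\{e_n^{(0)}\}$ of $X_0$ to be normalized, which is permissible since rescaling the vectors of an unconditional basis changes neither unconditionality nor the basis constant. The constant $c$ is the same $c=\|\varphi_0^{(0)}\|\le \frac{M\pi^2}{6}$ produced in Lemma \ref{T:normvector}.

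The only step needing care is this normalization. If one does not want to assume $\|e_0^{(0)}\|=1$, the fallback is to absorb $\|e_0^{(0)}\|$ into $c$, replacing $c$ by $c\|e_0^{(0)}\|$, which is still a positive constant independent of $\lambda$. The right-hand side is positive for every $\lambda\neq0$, because $e^t-1-t>0$ for $t>0$, so dividing is harmless. Otherwise the argument is a direct substitution.
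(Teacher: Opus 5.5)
Your proposal is correct and follows essentially the paper's proof. You specialize the estimate of Lemma \ref{T:normvector} to $m=j=0$, use $r_0=1$ to collapse the prefactor to $\frac{\sqrt{|\lambda|}}{1+\sqrt{|\lambda|}}$ and the partial sum to $1+\frac{1}{\sqrt{|\lambda|}}$, and bound $\|(|\lambda|-T)^{-1}\|$ from below by its action on $e_0^{(0)}$, with $c=\|\varphi_0^{(0)}\|$. Your explicit normalization $\|e_0^{(0)}\|=1$, or alternatively absorbing $\|e_0^{(0)}\|$ into $c$, makes precise a step the paper uses tacitly in writing $\|(|\lambda|-T)^{-1}\|\geq\|(|\lambda|-T)^{-1}e_0^{(0)}\|$.
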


\begin{proof}
Notice that,
$$\|(|\lambda|-T)^{-1}\|\geq\|(|\lambda|-T)^{-1}e_0^{(0)}\|
\geq
\frac{1}{\|\varphi_0^{(0)}\|}\left|\varphi_0^{(0)}\left((|\lambda|-T)^{-1}e_0^{(0)}\right)\right|.$$
For
$\varphi_0^{(0)}\left((|\lambda|-T)^{-1}e_0^{(0)}\right)$,
by Lemma \ref{T:normvector},
we have
\begin{align*}
\varphi_0^{(0)}\left((|\lambda|-T)^{-1}e_0^{(0)}\right)
\geq
\frac{\sqrt {|\lambda|}}{1+\sqrt {|\lambda|}}\left(e^{\frac{1}{\sqrt{|\lambda|}}}-1-\frac{1}{\sqrt{|\lambda|}}\right).
\end{align*}
Hence,
\begin{equation*}
\|(|\lambda|-T)^{-1}\|\geq
\frac{1}{c}\frac{\sqrt{|\lambda|}}{1+\sqrt{|\lambda|}}\left(e^{\frac{1}{\sqrt{|\lambda|}}}-1-\frac{1}{\sqrt{|\lambda|}}\right),\ \ \lambda \neq 0,
\end{equation*}
where $c=\|\varphi_0^{(0)}\|$.
\end{proof}

\begin{prop}\label{Kej}
Let $T$ be as in Lemma \ref{T:quani},
then $k_{T}(e_j^{(m)})=\sqrt{r_j}$ for each $m\geq 0$.
\end{prop}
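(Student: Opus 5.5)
We need an upper bound and a lower bound for the ratio in $k_T(e_j^{(m)})$, with both bounds tending to $\sqrt{r_j}$.

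\emph{Upper bound.} From the explicit expansion in the proof of Lemma \ref{T:normvector},
$$(\lambda-T)^{-1}e_j^{(m)}=\sum_{k\ge0}\frac{(2m)!\,r_j^k\,e_j^{(k+m)}}{(2k+2m)!\,\lambda^{k+1}}.$$
Since $\|e_j^{(k+m)}\|\le M\|e_j^{(0)}\|$, we get
$$\|(\lambda-T)^{-1}e_j^{(m)}\|\le C_m\sum_{k\ge0}\frac{\sqrt{r_j}^{\,2k+2m}}{(2k+2m)!\,\sqrt{|\lambda|}^{\,2k+2m}}\cdot\frac{|\lambda|^{m-1}}{r_j^{m}}\le C'_m|\lambda|^{m-1}e^{\sqrt{r_j}/\sqrt{|\lambda|}}.$$
This needs $r_j>0$. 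If $r_j=0$, the series is a single term, so the norm is of order $|\lambda|^{-1}$ and $k=0=\sqrt{r_j}$ is immediate.

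For the denominator we combine \eqref{S4:lnorm} with Lemma \ref{T:normal}. The lower bound there is stated at $|\lambda|$, but we only need it along the positive reals. Both give
$$\ln\|(\lambda-T)^{-1}\|=\frac{1}{\sqrt{|\lambda|}}+O(\ln(1/|\lambda|)),$$
where the upper half of this holds for all $\lambda$ and the lower half holds for $\lambda>0$. Since the numerator is bounded by $\sqrt{r_j}/\sqrt{|\lambda|}+O(\ln(1/|\lambda|))$, we need a lower bound on the denominator for every $\lambda$, not only positive ones.

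\emph{Main obstacle: the lower bound on $\|(\lambda-T)^{-1}\|$ for complex $\lambda$.} I would use the functional $\varphi_0^{(0)}$ again. Because all the $e^{(k)}$-coefficients are available, I would instead take $x=e_0^{(0)}$ and functionals $\psi=f_0^{(k)}$ extracting single coordinates. This gives
$$\|(\lambda-T)^{-1}\|\ge \frac{1}{2M}\max_k\frac{1}{(2k)!\,|\lambda|^{k+1}}.$$
Choosing $k\approx 1/(2\sqrt{|\lambda|})$ and applying Stirling's formula, the maximum term is
$$\exp\Bigl(\frac{1}{\sqrt{|\lambda|}}-O(\ln(1/|\lambda|))\Bigr).$$
Hence
$$\liminf_{\lambda\to0}\sqrt{|\lambda|}\,\ln\|(\lambda-T)^{-1}\|\ge1$$
uniformly in $\arg\lambda$. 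Combining, $\limsup$ of the ratio is at most $\sqrt{r_j}$.

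\emph{Lower bound.} Take $\lambda=t>0$ with $t\to0$ and use Lemma \ref{T:normvector}. For fixed $m$, the bracket satisfies
$$e^{\sqrt{r_j/t}}-\sum_{i\le 2m+1}\frac{(r_j/t)^{i/2}}{i!}\sim e^{\sqrt{r_j/t}},$$
and the prefactor is only polynomial in $t$. So the logarithm of the numerator is
$$\ln\|(t-T)^{-1}e_j^{(m)}\|=\sqrt{r_j}/\sqrt{t}-O(\ln(1/t)).$$
The denominator satisfies $\ln\|(t-T)^{-1}\|\le 1/\sqrt t+O(\ln(1/t))$ by \eqref{S4:lnorm}. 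Therefore the ratio tends to $\sqrt{r_j}$ along $t\downarrow0$, and this gives $k_T(e_j^{(m)})\ge\sqrt{r_j}$. Together with the upper bound, $k_T(e_j^{(m)})=\sqrt{r_j}$.
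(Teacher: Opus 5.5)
Your proposal is correct. Its skeleton matches the paper's:

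\begin{itemize}
\item the numerator is bounded above from the explicit expansion of $(\lambda-T)^{-1}e_j^{(m)}$; the paper's cruder bound $M(2m)!\bigl(e^{\sqrt{r_j}/\sqrt{|\lambda|}}+1/|\lambda|\bigr)$ has the small advantage of covering $r_j=0$ at once;
\item $k_T(e_j^{(m)})\ge\sqrt{r_j}$ is obtained, exactly as in the paper, by restricting to $\lambda=t\downarrow0$ and combining \eqref{T:ej} with \eqref{S4:lnorm}.
\end{itemize}

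The genuine difference is the lower bound on the denominator in the upper-bound step. The paper uses Lemma \ref{T:normal}, but \eqref{T:leqnorm} only controls $\|(|\lambda|-T)^{-1}\|$ at the positive point $|\lambda|$. The $\limsup$ defining $k_T$, however, runs over all complex $\lambda\to0$. Two things block a transfer from $|\lambda|$ to $\lambda$:
\begin{itemize}
\item the hypotheses do not make the diagonal operators $\sum_n e^{in\theta}u_{n,n}$ bounded, so no rotation similarity is available;
\item $\varphi_0^{(0)}$ itself fails off the positive axis: at $\lambda=-t$ one gets $\varphi_0^{(0)}\bigl((-t-T)^{-1}e_0^{(0)}\bigr)=\cos(1/\sqrt t)-1$, which is bounded.
\end{itemize}

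Your coordinate functionals $f_0^{(k)}$ avoid this. They satisfy $|f_0^{(k)}((\lambda-T)^{-1}e_0^{(0)})|=1/((2k)!\,|\lambda|^{k+1})$ (using $r_0=1$) and $\|f_0^{(k)}\|\le 2M$. They involve no cancellation and give a bound depending only on $|\lambda|$. Hence $\ln\|(\lambda-T)^{-1}\|\ge 1/\sqrt{|\lambda|}-O(\ln(1/|\lambda|))$ uniformly in $\arg\lambda$, which is precisely what the upper bound requires. So your route closes a step the paper's argument leaves open, and the same repair applies to Proposition \ref{T:subsetl}.

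Your separate treatment of $r_j=0$ also covers a case that can occur when $0\in\sigma$, where the formulas of Lemma \ref{T:normvector} degenerate because $\sqrt{r_j}$ appears in denominators.

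If you want the argument fully self-contained, apply the same trick to $f_j^{(k+m)}\bigl((t-T)^{-1}e_j^{(m)}\bigr)=(2m)!\,r_j^k/((2k+2m)!\,t^{k+1})$ and maximize over $k$. This gives the numerator lower bound directly, without relying on the precise prefactor in \eqref{T:ej}; only its asymptotics as $t\to0$ matter, and that is all you use anyway.
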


\begin{proof}
Since $e_j^{(m)}=u_{m,0}e_{j}^{(0)}$ for $m\geq 0$, we have $\|e_j^{(m)}\|\leq M \|e_{j}^{(0)}\|$.
Without loss of generality, one can assume that $\|e_{j}^{(0)}\|=1$ for each $j\geq 0$.
Thus, $\|e_j^{(m)}\|\leq M$ for $m,j\geq 0$.
Since
\begin{align*}
\|(\lambda-T)^{-1}e_j^{(m)}\|
&=\|\sum_{k=0}^{\infty}\frac{(2m)!{r_j}^k e_j^{(k+m)}}{(2k+2m)!\lambda^{k+1}}\|\\
&\leq
M\sum_{k=0}^{\infty}\frac{(2m)!{r_j}^k}{(2k+2m)!|\lambda|^{k+1}}\\
&\leq
M(2m)!\sum_{k=0}^{\infty}\frac{{r_j}^k}{(2k)!|\lambda|^{k+1}}\\
&=
M(2m)!\left(\sum_{k=1}^{\infty}\frac{\sqrt{{r_j}}^{2k}}{(2k)!\sqrt{|\lambda|}^{2k}}+\frac{1}{|\lambda|}\right)\\
&\leq
M(2m)!\left(e^{\frac{\sqrt{r_j}}{\sqrt{|\lambda|}}}+\frac{1}{|\lambda|}\right),
\end{align*}
it follows that
\begin{equation}\label{T:lnormalej}
\|(\lambda-T)^{-1}e_j^{(m)}\|\leq
M(2m)!\left(e^{\frac{\sqrt{r_j}}{\sqrt{|\lambda|}}}+\frac{1}{|\lambda|}\right).
\end{equation}
Combining \eqref{S4:lnorm} and \eqref{T:ej}, we have
\begin{align*}
k_{T}(e_j^{(m)})&=
\lim_{\lambda\rightarrow 0}
\frac{\ln \|(\lambda-T)^{-1}e_j^{(m)}\|}{\ln\|(\lambda-T)^{-1}\|}
\geq
\lim_{|\lambda|\rightarrow 0}
\frac{\ln \|(|\lambda|-T)^{-1}e_j^{(m)}\|}{\ln\|(|\lambda|-T)^{-1}\|}\\
&\geq\lim_{|\lambda|\rightarrow 0}
\frac{\ln\left(\frac{1}{c}\frac{(2m)!\sqrt{|\lambda|}|\lambda|^{m}}{\sqrt{r_j}(r_j^{m+1}+r_j^{m+1}\sqrt{|\lambda|})}
\left(e^{\frac{\sqrt{r_j}}{\sqrt{|\lambda|}}}-\sum_{i=0}^{2m+1}\frac{\sqrt{r_j}^i}{i!\sqrt{|\lambda|}^{i}}\right)\right)}
{\ln\left(\frac{1}{|\lambda|}+ \frac{M^2}{|\lambda|}e^{\frac{1}{\sqrt{|\lambda|}}}+
\frac{M^2}{(\sqrt{|\lambda|}+1)|\lambda|}e^{\frac{1}{\sqrt{|\lambda|}}}\right)}\\
&=\sqrt{r_j}.
\end{align*}
Combining \eqref{T:leqnorm} and \eqref{T:lnormalej}, we have
\begin{align*}
k_{T}(e_j^{(m)})=
\lim_{\lambda\rightarrow 0}
\frac{\ln \|(\lambda-T)^{-1}e_j^{(m)}\|}{\ln\|(\lambda-T)^{-1}\|}
&\leq
\frac{M(2m)!\left(e^{\frac{\sqrt{r_j}}{\sqrt{|\lambda|}}}+\frac{1}{|\lambda|}\right)}
{\ln\left(\frac{1}{c}\frac{\sqrt{|\lambda|}}{1+\sqrt{|\lambda|}}\left(e^{\frac{1}{\sqrt{|\lambda|}}}-1-\frac{1}{\sqrt{|\lambda|}}\right)\right)}\\
&=\sqrt{r_j}.
\end{align*}
Hence, $k_{T}(e_j^{(m)})=\sqrt{r_j}$ for each $m\geq 0$.
The proof is complete.
\end{proof}

\begin{prop}\label{R:lsubset}
Let $T$ be as in Lemma \ref{T:quani},
then $\sigma\subset \Lambda(T)$.
\end{prop}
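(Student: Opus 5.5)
The plan is to combine Proposition \ref{Kej} with the right-closedness of $\Lambda(T)$ from Lemma \ref{RClosed}. Recall how the construction was set up: the sequence $\{r_k:k\geq 0\}$ was chosen via Lemma \ref{rcl} so that $r_0=1$ and the right closure $\lceil\{\sqrt{r_k}:k\geq 0\}\rceil$ equals $\sigma$. Thus every $\tau\in\sigma$ has the form $\tau=\sup E$ for some nonempty (automatically bounded, since $E\subset[0,1]$) subset $E\subset\{\sqrt{r_k}:k\geq 0\}$.

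The first step is to note that each $\sqrt{r_j}$ lies in $\Lambda(T)$. By Proposition \ref{Kej} with $m=0$, the vector $e_j^{(0)}$ satisfies $k_T(e_j^{(0)})=\sqrt{r_j}$. This vector is nonzero because it is a basis vector of $X_0=\ran u_{0,0}$. Hence $\{\sqrt{r_k}:k\geq 0\}\subset\Lambda(T)$.

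The second step is to pass to suprema. Fix $\tau\in\sigma$ and write $\tau=\sup E$ with $\emptyset\neq E\subset\{\sqrt{r_k}\}\subset\Lambda(T)$. Lemma \ref{RClosed} says that $\Lambda(T)$ is right closed, i.e.\ $\sup E\in\Lambda(T)$ for every nonempty $E\subset\Lambda(T)$. Therefore $\tau\in\Lambda(T)$, and so $\sigma\subset\Lambda(T)$. This uses that $T$ is quasinilpotent, which is Lemma \ref{T:quani}.

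There is essentially no obstacle here. The only point to check is that the right closure produced by Lemma \ref{rcl} consists exactly of suprema of nonempty bounded subsets of the sequence, which matches the definition of right closure given before that lemma. Should one prefer not to invoke Lemma \ref{RClosed}, the fallback would be to pick $\sqrt{r_{j_i}}\uparrow\tau$ and build $x=\sum_i\epsilon_i e_{j_i}^{(0)}$ with rapidly decreasing $\epsilon_i$, then estimate $k_x$ from both sides using \eqref{T:ej} and \eqref{T:lnormalej}. This is far more laborious and unnecessary given the earlier lemma.
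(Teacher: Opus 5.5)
Your proposal is correct and follows the paper's own argument: Proposition \ref{Kej} (with $m=0$) puts every $\sqrt{r_j}$ into $\Lambda(T)$. The right-closedness of $\Lambda(T)$ from Lemma \ref{RClosed}, together with $\lceil\{\sqrt{r_k}:k\geq 0\}\rceil=\sigma$, then gives $\sigma\subset\Lambda(T)$; your extra checks that $e_j^{(0)}\neq 0$ and that subsets of $[0,1]$ are bounded are harmless details of the same argument.
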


\begin{proof}
By Proposition \ref{Kej}, we have $\{\sqrt {r_k}:k\geq 0\}\subset \Lambda(T)$.
Since $\Lambda(T)$ is right closed and the right closure of $\{\sqrt{r_k}: k\geq 0\}$ is $\sigma$, it holds that $\sigma\subset \Lambda(T)$.
\end{proof}

\begin{prop}\label{T:subsetl}
Let $T$ be as in Lemma \ref{T:quani}.
For any $x\in X$, let
$$l=\sup\{r_j: \text{there exists $m>0$ such that} ~~ f_{j}^{(m)}(x)\neq 0\},$$
then $k_{T}(x)\leq\sqrt{l}$.
\end{prop}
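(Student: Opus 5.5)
The plan is to bound $\|(\lambda-T)^{-1}x\|$ from above by $e^{\sqrt{l}/\sqrt{|\lambda|}}$ times a factor polynomial in $1/|\lambda|$. Separately, $\|(\lambda-T)^{-1}\|$ will be bounded from below by $e^{1/\sqrt{|\lambda|}}$ divided by a polynomial factor, uniformly in $\arg\lambda$. Taking logarithms then gives $k_T(x)\le\sqrt l$. I read the supremum in the statement as over all $m\ge 0$ (this only enlarges $l$). Also $l\le 1$, since $r_j\in[0,1]$ and $r_0=1$.

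\textbf{Step 1: estimating $T^k x$.} Write $x=\sum_n u_{n,n}x=\sum_n u_{n,0}y_n$ with $y_n=u_{0,n}x\in X_0$, so that
\[
y_n=\sum_j f_j^{(n)}(x)e_j^{(0)}, \qquad \|y_n\|\le M\|x\|.
\]
From the formula for $T^k$ in Lemma \ref{T:quani},
\[
T^kx=\sum_n \frac{(2n)!}{(2n+2k)!}\,u_{n+k,0}A_0^k y_n .
\]
Only indices $j$ with $f_j^{(n)}(x)\neq0$ occur in $y_n$, and for these $r_j\le l$. Hence, for $l>0$, apply Lemma \ref{unc} (with $K=1$) to the bounded multiplier $\xi_j=(r_j/l)^k$ on that support, which satisfies $\|\xi\|_\infty\le 1$. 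This gives $\|A_0^ky_n\|\le l^k\|y_n\|$. If $l=0$, then $A_0^ky_n=0$ for $k\ge1$.

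Summability in $n$ comes from splitting the factorial ratio. For $k\ge1$,
\[
\frac{(2n)!}{(2n+2k)!}\le \frac{1}{(2n+1)(2n+2)}\cdot\frac{1}{(2k-2)!},
\]
since the remaining $2k-2$ factors of $(2n+2k)!/(2n)!$ are at least $3,4,\dots,2k$. This yields $\|T^kx\|\le C\,l^k\|x\|/(2k-2)!$. Summing the Neumann series,
\[
\|(\lambda-T)^{-1}x\|\le \frac{\|x\|}{|\lambda|}+\frac{C\|x\|}{|\lambda|^2}\sum_{k\ge1}\frac{l^{k-1}}{(2k-2)!\,|\lambda|^{k-1}}\le C'\,|\lambda|^{-2}e^{\sqrt l/\sqrt{|\lambda|}} .
\]
Therefore
\[
\ln\|(\lambda-T)^{-1}x\|\le \frac{\sqrt{l}}{\sqrt{|\lambda|}}+O\!\left(\ln\tfrac1{|\lambda|}\right).
\]

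\textbf{Step 2: lower bound for the resolvent norm, valid for all $\lambda$.} Lemma \ref{T:normal} only treats $\lambda=|\lambda|>0$, and I do not see a bounded rotation $\sum e^{in\theta}u_{n,n}$ on a general $X$ that would transfer it. So I would redo it coefficientwise, which I expect to be the main point needing care. By the computation in Lemma \ref{T:normvector},
\[
(\lambda-T)^{-1}e_0^{(0)}=\sum_k \frac{r_0^k}{(2k)!\lambda^{k+1}}\,e_0^{(k)}, \qquad r_0=1.
\]
Apply the single functional $f_0^{(k)}$, of norm at most $2M$, which isolates one term with no cancellation. This gives
\[
\|(\lambda-T)^{-1}\|\ge \|(\lambda-T)^{-1}e_0^{(0)}\|\ge \frac{1}{2M}\,\max_k\frac{1}{(2k)!\,|\lambda|^{k+1}} .
\]
With $t=1/\sqrt{|\lambda|}$, the largest even-indexed term $t^{2k}/(2k)!$ is at least $e^t/(2t+2)$, because the terms of $e^t$ are unimodal with about $t+1$ comparable dominant terms. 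Hence
\[
\ln\|(\lambda-T)^{-1}\|\ge \frac{1}{\sqrt{|\lambda|}}-O\!\left(\ln\tfrac{1}{|\lambda|}\right).
\]

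\textbf{Step 3: conclusion.} Divide the bound of Step 1 by that of Step 2 and let $\lambda\to0$. The logarithmic error terms are $o(|\lambda|^{-1/2})$, so
\[
\limsup_{\lambda\to0}\frac{\ln\|(\lambda-T)^{-1}x\|}{\ln\|(\lambda-T)^{-1}\|}\le \sqrt l .
\]
When $l=0$ the numerator is only $O(\ln(1/|\lambda|))$, and the ratio tends to $0$.
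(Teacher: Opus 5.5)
Your proof is correct, and it differs from the paper's in both halves. Reading $m\ge 0$ is the right call. The paper's own proof expands $x=\sum_{m\ge0}\sum_j f_j^{(m)}(x)e_j^{(m)}$, and with the literal $m>0$ the defining set is empty for $x=e_0^{(0)}$ although $k_T(e_0^{(0)})=1$.

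\textbf{Numerator.} The paper applies $T^k$ to the double expansion. In one step it pulls the coefficients $\frac{(2m)!\,r_j^k}{(2m+2k)!}\le\frac{l^k}{(2k)!}$ out of the norm and then uses $\|\sum_m\sum_j f_j^{(m)}(x)e_j^{(k+m)}\|=\|x\|$. This gives the cleaner bound $\|x\|(e^{\sqrt{l}/\sqrt{|\lambda|}}+|\lambda|^{-1})$. However, it tacitly needs two things that do not follow from the hypotheses of Theorem~\ref{dlB}: the whole system $\{e_j^{(m)}\}_{j,m}$ must be $1$-unconditional, and the shift $\sum_m u_{m+k,m}$ must be isometric. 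You use unconditionality only inside $X_0$, block by block via Lemma~\ref{unc}. You then spend the summable weights $\frac{1}{(2n+1)(2n+2)}$ on the triangle inequality over $n$. The only loss is $(2k)!\to(2k-2)!$, a polynomial factor in $1/|\lambda|$ that vanishes after taking logarithms.

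\textbf{Denominator.} The paper invokes Lemma~\ref{T:normal}, whose functional $\varphi_0^{(0)}$ sums many terms that are all positive only when $\lambda>0$. It then applies the resulting bound at arbitrary complex $\lambda$ without any rotation argument. This transfer genuinely fails for that functional: for $\lambda<0$, $\varphi_0^{(0)}((\lambda-T)^{-1}e_0^{(0)})=\cos(|\lambda|^{-1/2})-1$, which is bounded. Your single coordinate functional $f_0^{(k)}$ isolates the one term $\frac{1}{(2k)!\,|\lambda|^{k+1}}$. There is no phase cancellation, so the lower bound is uniform in $\arg\lambda$.

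In short, the paper's route is shorter and slightly sharper where the extra structure exists (e.g.\ $\ell^p$ with its canonical matrix units). Yours works in the generality actually assumed. The exact constant in your $e^t/(2t+2)$ is immaterial: any polynomial denominator suffices, and one follows from $\cosh t\ge e^t/2$ together with the geometric decay of $t^{2k}/(2k)!$ for $k>t$.
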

\begin{proof}
Let $x=\sum_{m=0}^{\infty}\sum_{j=0}^{\infty}f_{j}^{(m)}(x)e_j^{(m)}\in X$.
For each $k\geq 1$, we have
\begin{align*}
T^kx&=\sum_{n=0}^{\infty}\frac{(2n)!u_{n+k,0}A_0^ku_{0,n}}{(2n+2k)!}
\sum\limits_{m=0}^{\infty}\sum\limits_{j=0}^{\infty}f_{j}^{(m)}(x)e_j^{(m)}\\
&=\sum\limits_{m=0}^{\infty}\sum\limits_{j=0}^{\infty}\frac{(2m)!r_j^kf_{j}^{(m)}(x)e_j^{(k+m)}}{(2m+2k)!}.
\end{align*}
It follows that
\begin{align*}
\|(\lambda-T)^{-1}x\|=\|\sum_{k=0}^{\infty}\frac{T^k}{\lambda^{k+1}}x\|
&=\|\sum_{k=0}^{\infty}\sum\limits_{m=0}^{\infty}\sum\limits_{j=0}^{\infty}\frac{(2m)!r_j^kf_{j}^{(m)}(x)e_j^{(k+m)}}{(2m+2k)!\lambda^{k+1}}\|\\
&\leq
\sum_{k=0}^{\infty}\frac{l^k}{(2k)!|\lambda|^{k+1}}
\|\sum\limits_{m=0}^{\infty}\sum\limits_{j=0}^{\infty}f_{j}^{(m)}(x)e_j^{(k+m)}\|\\
&=
\sum_{k=0}^{\infty}\frac{l^k}{(2k)!|\lambda|^{k+1}}\|x\|\\
&=
\|x\|\sum_{k=0}^{\infty}\frac{\sqrt{l}^{2k}}{(2k)!\sqrt{|\lambda|}^{2k+2}}\\
&\leq \|x\|\left(e^{\frac{\sqrt{l}}{\sqrt{|\lambda|}}}+\frac{1}{|\lambda|}\right).
\end{align*}
Then, by Lemma \ref{T:normal}, we have
\begin{align*}
k_{T}(x)&=\limsup_{\lambda\rightarrow 0}
\frac{\ln\|(\lambda-T)^{-1}x\|}{\ln\|(\lambda-T)^{-1}\|}\\
&\leq
\limsup_{\lambda\rightarrow 0}
\frac{\ln\left(\|x\|\left(e^{\frac{\sqrt{l}}{\sqrt{|\lambda|}}}+\frac{1}{|\lambda|}\right)\right)}
{\ln\left(\frac{1}{c}\frac{\sqrt{|\lambda|}}{1+\sqrt{|\lambda|}}\left(e^{\frac{1}{\sqrt{|\lambda|}}}-1-\frac{1}{\sqrt{|\lambda|}}\right)\right)}=\sqrt{l}.
\end{align*}
This completes the proof that $k_T(x) \leq \sqrt{l}$.
\end{proof}

\begin{prop}\label{T:subsetr}
Let $T$ be as in Lemma \ref{T:quani}.
If there exists a non-negative integers $n_0$ such that $f_{j}^{(n_0)}(x)\neq 0$ for any $x\in X$,
then $k_{T}(x)\geq \sqrt{r_j}$.
\end{prop}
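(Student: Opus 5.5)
Read the statement as: for a fixed $x\in X$, if $f_j^{(n_0)}(x)\neq 0$ for some $n_0\ge 0$ and some $j$, then $k_T(x)\geq\sqrt{r_j}$. The plan is to copy the lower-bound argument of Lemma \ref{T:normvector}, but apply a functional to $(|\lambda|-T)^{-1}x$ instead of to $(|\lambda|-T)^{-1}e_j^{(n_0)}$. The problem is that $x$ may have nonzero coordinates $f_i^{(m)}(x)$ at many positions. So I will use a functional that reads only the single coordinate index $j$ at one level $N$ and never sums over levels.

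\textbf{Step 1: an exact coordinate formula.} Take $N=n_0+k$ with $k\ge 0$. Using the expansion $T^kx=\sum_{m,i}\frac{(2m)!r_i^k f_i^{(m)}(x)}{(2m+2k)!}e_i^{(k+m)}$ from the proof of Proposition \ref{T:subsetl}, together with $f_j^{(N)}(e_i^{(p)})=\delta_{ij}\delta_{Np}$, we get
$$f_j^{(N)}\big((\mu-T)^{-1}x\big)=\sum_{m=0}^{N}\frac{(2m)!\,r_j^{N-m}f_j^{(m)}(x)}{(2N)!\,\mu^{N-m+1}}.$$
This is a polynomial in $1/\mu$ of degree at most $N+1$. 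Its coefficient of $\mu^{-(N-n_0+1)}$ is nonzero because $f_j^{(n_0)}(x)\ne0$.

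\textbf{Step 2: choose $N$ depending on $\mu$.} Choose $n_0$ minimal, so $f_j^{(m)}(x)=0$ for $m<n_0$. Then the term $m=n_0$ has the highest power of $1/\mu$ in the formula of Step 1. For $\mu=|\lambda|>0$ small, write $t=\sqrt{r_j/\mu}$ and take $N\approx n_0+t$. The dominant term is then
$$\frac{(2n_0)!\,r_j^{N-n_0}|f_j^{(n_0)}(x)|}{(2N)!\,\mu^{N-n_0+1}}\approx \frac{t^{2(N-n_0)}}{(2N)!}\cdot\frac{C}{\mu},$$
and Stirling gives that this is $e^{(2-o(1))t}$ up to polynomial factors. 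That exponent would give a ratio above $\sqrt{r_j}$, so I must recheck it against the paper's normalization. The intended outcome is that $\ln|f_j^{(N)}((\mu-T)^{-1}x)|\gtrsim\sqrt{r_j}/\sqrt\mu$, matching the lower bound in Lemma \ref{T:normvector}. The remaining terms $m>n_0$ carry lower powers of $1/\mu$. Since $|f_j^{(m)}(x)|\le 2M\|x\|$, their sum is bounded by $\sum_{m>n_0}\frac{(2m)!}{(2N)!}r_j^{N-m}\mu^{-(N-m+1)}\cdot 2M\|x\|$. Each such term is smaller than the main one by a factor of roughly $\mu\cdot m^2/r_j$, but with $N\sim\mu^{-1/2}$ the factors $(2m)!/(2n_0)!$ grow. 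Controlling this is the main obstacle.

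\textbf{Step 3: handling the tail.} I would first try to replace the single level $N$ by the averaged functional $\varphi_j^{(n_0)}=\sum_n f_j^{(n+n_0)}/((2n+1)(2n+2))$ from Lemma \ref{T:normvector}. That gives $\varphi_j^{(n_0)}((\mu-T)^{-1}x)$ as an explicit series. Its $f_j^{(n_0)}(x)$ part is exactly the expression estimated in Lemma \ref{T:normvector}, so it is $\ge e^{\sqrt{r_j/\mu}}$ up to polynomial factors. The contributions from levels $m>n_0$ have the same structure shifted by $m-n_0$, hence are of the same exponential order and might cancel the main part. To rule this out, I would split $x=x'+x''$. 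Here $x'=\sum_{m\le L}\sum_i f_i^{(m)}(x)e_i^{(m)}$, a finite sum of levels handled via Step 1, since for fixed $L$ only finitely many coefficients enter. The remainder $x''$ has small norm, and for it I bound $|\varphi((\mu-T)^{-1}x'')|$ by the estimate of Proposition \ref{T:subsetl} along a sparse sequence $\mu\to0$.

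\textbf{Step 4: conclusion.} The sequence $\mu_n\to0^+$ is real and positive. Dividing $\ln\|(\mu_n-T)^{-1}x\|\ge\ln|\varphi((\mu_n-T)^{-1}x)|-\ln\|\varphi\|$ by the upper bound \eqref{S4:lnorm}, whose logarithm is $\sim 1/\sqrt{\mu_n}$, gives $\limsup\ge\sqrt{r_j}$, i.e.\ $k_T(x)\ge\sqrt{r_j}$. I expect the non-cancellation in Step 3 to be the genuinely delicate point.
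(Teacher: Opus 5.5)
Your proposal has a genuine gap. The one estimate everything rests on, that the contribution of $f_j^{(n_0)}(x)$ is not cancelled by the other levels, is never proved, and the steps offered toward it do not work.

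First, Step 2 miscomputes the growth rate. Since $\sum_N t^{2N}/(2N)!=\cosh t<e^t$, no single term can be $e^{(2-o(1))t}$. With your choice $N\approx n_0+t$ you only get about $(e/2)^{2t}=e^{2(1-\ln 2)t}$, which gives the constant $2(1-\ln 2)\sqrt{r_j}<\sqrt{r_j}$. The right choice is $2N\approx t=\sqrt{r_j/\mu}$, where $t^{2N}/(2N)!$ is $e^{t}$ up to a polynomial factor.

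Second, Step 3 does not close the gap. The averaged functional $\varphi_j^{(n_0)}$ mixes all levels $m\ge n_0$ into every Laurent coefficient, which is exactly what your Step 1 avoided. In the split $x=x'+x''$ with $L$ fixed, the only bound you have for the $x''$-part comes from Proposition \ref{T:subsetl}. It is $\|\varphi\|\,\|x''\|\,(e^{\sqrt{l/\mu}}+1/\mu)$ with $l$ possibly larger than $r_j$, whereas the main term is only of order $\mu^{n_0}e^{\sqrt{r_j/\mu}}$. A fixed small $\|x''\|$ does not make this error negligible as $\mu\to0$, and a sparse sequence does not help. Letting $L$ depend on $\mu$ just brings back the original tail problem.

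The missing idea, which is the one the paper uses, is to exploit the argument of $\lambda$. $k_T(x)$ is a $\limsup$ over all $\lambda\to0$, and the bound \eqref{S4:lnorm} depends only on $|\lambda|$. So you may work at the point of the circle $|\lambda|=\mu$ where the scalar function is largest, and isolate one Laurent coefficient by Fourier averaging (Cauchy's estimate).

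The paper does this with $\varphi_j^{(n_0)}$, and its averaging step is loose: it replaces $\lambda^{-k}$ by $|\lambda|^{-k}$ inside the sum, while the genuine Fourier coefficients combine all levels. With your single-level functional the extraction is exact and there is no tail at all. By your Step 1, $h_N(\lambda)=f_j^{(N)}((\lambda-T)^{-1}x)$ is a polynomial in $1/\lambda$ whose coefficient of $\lambda^{-(N-n_0+1)}$ is $(2n_0)!\,r_j^{N-n_0}f_j^{(n_0)}(x)/(2N)!$ (here minimality of $n_0$ is not even needed). Cauchy's estimate and $\|f_j^{(N)}\|\le 2M$ then give
\[
\max_{|\lambda|=\mu}\|(\lambda-T)^{-1}x\| \ \ge\ \frac{1}{2M}\max_{|\lambda|=\mu}|h_N(\lambda)| \ \ge\ \frac{(2n_0)!\,r_j^{N-n_0}\,|f_j^{(n_0)}(x)|}{2M\,(2N)!\,\mu^{N-n_0+1}}.
\]
Choosing $2N\approx\sqrt{r_j/\mu}$ and dividing by the logarithm of \eqref{S4:lnorm} yields $k_T(x)\ge\sqrt{r_j}$.

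Alternatively, your real-axis version also goes through once $2N\le\sqrt{r_j/\mu}$ and $n_0$ is minimal. The level-$m$ term equals the main term times $\frac{f_j^{(m)}(x)}{f_j^{(n_0)}(x)}\prod_{i=2n_0+1}^{2m}\frac{i}{\sqrt{r_j/\mu}}$. Every factor is at most $1$, and the sum over $n_0<m\le N$ is $O(\mu)$. So the obstacle you flagged disappears once $N$ is chosen correctly.
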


\begin{proof}
Since there exists a non-negative integers $n_0$ such that
$f_{j}^{(n_0)}(x)\neq 0$ for any $x\in X$,
one can assume that
$x=\sum_{m=n_0}^{\infty}\sum_{i=0}^{\infty}f_{i}^{(m)}(x)e_i^{(m)}$.
For $\lambda\neq 0$, we have
\begin{align*}
\varphi_{j}^{(n_0)}\left((\lambda-T)^{-1}x\right)
&=\varphi_{j}^{(n_0)}\left(\sum_{k=0}^{\infty}\frac{T^k}{\lambda^{k+1}}x\right)\\
&=\varphi_{j}^{(n_0)}\left(\sum_{k=0}^{\infty}\sum_{m=n_0}^{\infty}\sum_{i=0}^{\infty}\frac{(2m)!f_i^{(m)}(x)r_i^k{e_i}^{(k+m)}}{(2m+2k)!\lambda^{k+1}}\right)\\
&=\sum_{k=0}^{\infty}\sum_{m=n_0}^{\infty}\varphi_{j}^{(n_0)}\left(\sum_{i=0}^{\infty}\frac{(2m)!f_i^{(m)}(x)r_i^k{e_i}^{(k+m)}}{(2m+2k)!\lambda^{k+1}}\right)\\
&=\sum_{k=0}^{\infty}\sum_{m=n_0}^{\infty}\frac{(2m)!f_j^{(m)}(x)r_j^k\varphi_{j}^{(n_0)}\left({e_j}^{(k+m)}\right)}{(2m+2k)!\lambda^{k+1}}\\
&=\varphi_{j}^{(n_0)}\left(\sum_{k=0}^{\infty}\sum_{m=n_0}^{\infty}\frac{(2m)!f_j^{(m)}(x)r_j^k{e_j}^{(k+m)}}{(2m+2k)!\lambda^{k+1}}\right)\\
&=\varphi_{j}^{(n_0)}\left((\lambda-T)^{-1}\sum_{m=n_0}^{\infty}f_j^{(m)}(x){e_j}^{(m)}\right)\\
&=\sum_{m=n_0}^{\infty}f_j^{(m)}(x)\varphi_{j}^{(n_0)}\left((\lambda-T)^{-1}{e_j}^{(m)}\right)\\
&=\sum_{m=n_0}^{\infty}f_j^{(m)}(x)
\sum_{k=0}^{\infty}\frac{r_j^k (2m)!}{(2m+2k+2)!\lambda^{k+1}}\\
&=\sum_{m=n_0}^{\infty}(2m)!f_j^{(m)}(x){r_j}^{-m}\lambda^{m-1}\sum_{k=m}^{\infty}\frac{r_j^k}{(2k+2)!\lambda^{k}}\\
&=\sum_{k=n_0}^{\infty}\frac{{r_j}^k}{(2k+2)!\lambda^{k}}\left(\sum_{m=n_0}^{k}(2m)!f_j^{(m)}(x){r_j}^{-m}\lambda^{m-1}\right).
\end{align*}
We next compute the integral mean value of $\left|\varphi_{j}^{(n_0)}\left((\lambda-T)^{-1}x\right)\right|$.
Set $\lambda=te^{i\theta}$ with $t=|\lambda|$.
Then
\begin{align*}
&\frac{1}{2\pi}\int_{0}^{2\pi}\left|\varphi_{j}^{(n_0)}\left((\lambda-T)^{-1}x\right)\right| d\theta\\
&=
\frac{1}{2\pi}\int_{0}^{2\pi} \left|\sum\limits_{k=n_0}^{\infty}\frac{{r_j}^k}{(2k+2)!\lambda^{k}}\left(\sum_{m=n_0}^{k}(2m)!f_j^{(m)}(x){r_j}^{-m}\lambda^{m-1}\right)\right|d\theta\\
&\geq
\left|\sum\limits_{k=n_0}^{\infty}\frac{{r_j}^k}{(2k+2)!t^{k}}\frac{1}{2\pi}\int_{0}^{2\pi}\sum\limits_{m=n_0}^{k}(2m)!f_j^{(m)}(x){r_j}^{-m}t^{m-1}e^{i(m-n_0)\theta}\right|d\theta\\
&=
\left|\sum\limits_{k=n_0}^{\infty}\frac{{r_j}^k}{(2k+2)!t^{k}}\frac{1}{2\pi}\int_{0}^{2\pi}\sum\limits_{m=n_0}^{k}(2m)!f_j^{(m)}(x){r_j}^{-m}t^{m-1}e^{i(m-n_0)\theta}d\theta\right|\\
&=
\left|\sum\limits_{k=n_0}^{\infty}\frac{{r_j}^k}{(2k+2)!t^{k}}(2m_0)!f_j^{(n_0)}(x){r_j}^{-n_0}t^{n_0-1}\right|\\
&=\left|f_j^{(n_0)}(x)\sum\limits_{k=0}^{\infty}\frac{{r_j}^k(2n_0)!}{(2k+2n_0+2)!t^{k+1}}\right|\\
&=
\left|f_j^{(n_0)}(x)\right|\left|\varphi_j^{(n_0)}\left((t-T)^{-1}e_j^{(n_0)}\right)\right|.
\end{align*}
Thus, for $t>0$, we have
\begin{align*}
\sup_{\theta\in[0,2\pi]}\left|\varphi_{j}^{(n_0)}\left((te^{i\theta}-T)^{-1}x\right)\right|
&\geq{\frac{1}{2\pi}\int_{0}^{2\pi}\left|\varphi_{j}^{(n_0)}\left((te^{i\theta}-T)^{-1}x\right)\right| d\theta}\\
&\geq \left|f_j^{(n_0)}(x)\right|\left|\varphi_j^{(n_0)}\left((t-T)^{-1}e_j^{(n_0)}\right)\right|.
\end{align*}
Pick $\lambda=te^{i\theta}$ so that
$$\left|\varphi_{j}^{(n_0)}\left((\lambda-T)^{-1}x\right)\right|=\sup_{\theta\in[0,2\pi]}\left|\varphi_{j}^{(n_0)}\left((te^{i\theta}-T)^{-1}x\right)\right|.$$
Thus,
$$\left|\varphi_{j}^{(n_0)}\left((te^{i\theta}-T)^{-1}x\right)\right|\geq \left|f_j^{(n_0)}(x)\right|\left|\varphi_j^{(n_0)}\left((t-T)^{-1}e_j^{(n_0)}\right)\right|.$$
Notice that
\begin{align*}
\|(te^{i\theta}-T)^{-1}x\|
&\geq \frac{1}{c}\left|\varphi_{j}^{(n_0)}\left((te^{i\theta}-T)^{-1}x\right)\right|\\
&\geq \frac{1}{c}\left|f_j^{(n_0)}(x)\right|\left|\varphi_j^{(n_0)}\left((t-T)^{-1}e_j^{(n_0)}\right)\right|.
\end{align*}
Then, by Lemma \ref{T:normvector}, we have
\begin{align*}\label{F:re}
\|(te^{i\theta}-T)^{-1}x\|
\geq
\frac{1}{c}\left|f_j^{(n_0)}(x)\right|\frac{(2n_0)!\sqrt{|t|}|t|^{n_0}}{\sqrt{r_j}(r_j^{n_0+1}+r_j^{n_0+1}\sqrt{|t|})}
\left(e^{\frac{\sqrt{r_j}}{\sqrt{|t|}}}-\sum_{i=0}^{2n_0+1}\frac{\sqrt{r_j}^i}{i!\sqrt{|t|}^{i}}\right).
\end{align*}
It follows that
\begin{align*}
k_{T}(x)
&=\limsup_{\lambda\rightarrow 0}
\frac{\ln\|(\lambda-T)^{-1}x\|}{\ln\|(\lambda-T)^{-1}\|}\\
&\geq\limsup_{|\lambda|\rightarrow 0}
\frac{\ln\|(|\lambda|-T)^{-1}x\|}{\ln\|(|\lambda|-T)^{-1}\|}\\
&\geq\limsup_{t\rightarrow 0}
\frac{\ln\left(\frac{1}{c}\left|f_{j}^{n_0}(x)\right|\frac{(2n_0)!\sqrt{|t|}|t|^{n_0}}{\sqrt{r_j}\left(r_j^{n_0+1}+r_j^{n_0+1}\sqrt{|t|}\right)}
\left(e^{\frac{\sqrt{r_j}}{\sqrt{|t|}}}-\sum_{i=0}^{2n_0+1}\frac{\sqrt{r_j}^i}{i!\sqrt{|t|}^{i}}\right)\right)}
{\ln\left(\frac{1}{|t|}+ \frac{M^2}{|t|}e^{\frac{1}{\sqrt{|t|}}}+
\frac{M^2}{(\sqrt{|t|}+1)|t|}e^{\frac{1}{\sqrt{|t|}}}\right)}\\
&=\sqrt{r_j}.
\end{align*}
So, $k_{T}(x)\geq \sqrt{r_j}$.
The proof is complete.
\end{proof}

\begin{prop}\label{Prop2}
Let $T$ be as in Lemma \ref{T:quani}, then
$\Lambda(T)\subset \sigma$.
\end{prop}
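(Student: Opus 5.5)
Fix $x\neq 0$; the aim is to show $k_T(x)\in\sigma$. Expand $x=\sum_{m\ge0}\sum_{j\ge0} f_j^{(m)}(x)e_j^{(m)}$ as in Proposition \ref{T:subsetl}. Since $x\neq 0$, the set
$$S_x=\{j:\ f_j^{(m)}(x)\neq 0 \text{ for some } m\ge 0\}$$
is nonempty. Put $l=\sup\{r_j: j\in S_x\}$. We will show $k_T(x)=\sqrt{l}$, and then argue that $\sqrt l\in\sigma$.

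\textbf{Upper bound.} Proposition \ref{T:subsetl} gives $k_T(x)\le\sqrt l$. The supremum there is written over $m>0$, but the same estimate works with $m\ge 0$. The only input is that the multiplication $A_0^k$ acts diagonally with coefficients $r_j^k\le l^k$ on the coordinates that actually occur in $x$. For that we use Lemma \ref{unc} with basis constant $1$, applied to the sequence $\xi=(r_j^k/l^k)\mathbf 1_{S_x}$, whose sup-norm is at most $1$. This gives
$$\Big\|\sum_{m,j}r_j^k f_j^{(m)}(x)e_j^{(k+m)}\Big\|\le l^k\,C\|x\|,$$
where $C$ comes from transporting by $u_{k+m,0}u_{0,m}$ and the uniform bound $M$. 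The growth in $k$ is then killed by $1/(2k)!$ exactly as in Proposition \ref{T:subsetl}.

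\textbf{Lower bound.} For each $j\in S_x$, choose $n_0$ minimal with $f_j^{(n_0)}(x)\ne0$. Proposition \ref{T:subsetr}, via the circle-averaging argument with the functional $\varphi_j^{(n_0)}$, yields $k_T(x)\ge\sqrt{r_j}$. Taking the supremum over $j\in S_x$ gives $k_T(x)\ge\sup_{j\in S_x}\sqrt{r_j}=\sqrt l$. Hence $k_T(x)=\sqrt l$.

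\textbf{Membership in $\sigma$.} Now $\sqrt l=\sup\{\sqrt{r_j}:j\in S_x\}$ is the supremum of a nonempty bounded subset of $\{\sqrt{r_k}:k\ge0\}$. By the choice of the sequence (Lemma \ref{rcl}), the right closure of $\{\sqrt{r_k}\}$ is $\sigma$. Therefore $\sqrt l\in\lceil\{\sqrt{r_k}\}\rceil=\sigma$, so $\Lambda(T)\subset\sigma$.

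\textbf{Main obstacle.} The hardest part is the upper bound: justifying that $(\lambda-T)^{-1}x$ is controlled by $e^{\sqrt{l}/\sqrt{|\lambda|}}$ uniformly. This needs unconditionality to bound the diagonal operator restricted to the support of $x$ by $l^k$ rather than by $\|A_0\|^k$. It also needs care in moving the expansion across the levels $m$ via the matrix units, using the summability $x=\sum_m u_{m,m}x$ together with the shift $u_{k+m,0}u_{0,m}$, which is bounded by $M^2$ on each level. The remaining points should be routine: the lower bound holds for every $j$, and taking the supremum finishes it.
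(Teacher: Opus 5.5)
Your skeleton is the paper's. Its proof of Proposition~\ref{Prop2} combines Propositions~\ref{T:subsetl} and~\ref{T:subsetr} into $k_T(x)=\sqrt l$ and then uses right-closedness. Your refinements are correct and in fact needed:
\begin{itemize}
\item $l$ must be a supremum over all levels $m\ge 0$; otherwise it is undefined for $x=e_j^{(0)}$.
\item Proposition~\ref{T:subsetr} should be applied for each $j\in S_x$, with $n_0$ minimal for that $j$. This is legitimate because $\varphi_j^{(n_0)}\bigl((\lambda-T)^{-1}x\bigr)$ only involves the coefficients $f_j^{(m)}(x)$, and those with $m<n_0$ vanish.
\end{itemize}
The membership step is fine.

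The gap is in your justification of the upper bound. The displayed inequality $\bigl\|\sum_{m,j}r_j^kf_j^{(m)}(x)e_j^{(k+m)}\bigr\|\le Cl^k\|x\|$ does not follow from the uniform bound $M$, and it is false in general. Neither $M$ nor the norm convergence of $\sum_m u_{m,m}x$ controls a sum over infinitely many levels, and the level shift $\sum_m u_{m+k,m}$ need not be bounded. For example, take $X=\ell^2(\mathbb{N};\ell^2)\oplus\ell^1(\mathbb{N};\ell^2)$. Let level $2n$ be the $n$-th slot of the first summand and level $2n+1$ the $n$-th slot of the second, with $u_{i,j}$ the slot-to-slot identifications. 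All hypotheses of Theorem~\ref{dlB} hold. Yet for $x$ with $u_{0,2n}x=\tfrac{1}{n+1}e_0^{(0)}$ and $u_{0,2n+1}x=0$, your sum with $k=1$ has norm $\sum_n\tfrac1{n+1}=\infty$.

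For the same reason you cannot pull $\tfrac{(2m)!}{(2m+2k)!}\le\tfrac1{(2k)!}$ out of the norm ``as in Proposition~\ref{T:subsetl}''. That would need unconditionality across levels, which is not assumed; the paper's proof of Proposition~\ref{T:subsetl} has the same defect.

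The repair is to keep the weights inside, use the triangle inequality level by level, and apply Lemma~\ref{unc} only inside $X_0$:
\[
\|T^kx\|\le\sum_{m\ge0}\frac{(2m)!}{(2m+2k)!}\bigl\|u_{m+k,0}A_0^ku_{0,m}x\bigr\|\le KM^2l^k\|x\|\sum_{m\ge0}\frac{(2m)!}{(2m+2k)!}\le\frac{2\ln 2\cdot KM^2\,l^k}{(2k)!}\|x\|\qquad(k\ge1).
\]
Hence $\|(\lambda-T)^{-1}x\|\le\frac{\|x\|}{|\lambda|}\bigl(1+C\cosh\sqrt{l/|\lambda|}\bigr)$ for all $\lambda\ne0$.

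There is a second, smaller gap. Since $k_T(x)$ is a $\limsup$ over complex $\lambda$, the final ratio estimate needs a lower bound on $\|(\lambda-T)^{-1}\|$ at every $\lambda$, but Lemma~\ref{T:normal} only covers $\lambda>0$. This is easy to supply. From $u_{0,k}(\lambda-T)^{-1}e_0^{(0)}=\frac{1}{(2k)!\,\lambda^{k+1}}e_0^{(0)}$ you get $\|(\lambda-T)^{-1}\|\ge\frac{1}{M(2k)!\,|\lambda|^{k+1}}$ for every $k$. Hence $\ln\|(\lambda-T)^{-1}\|\ge|\lambda|^{-1/2}-O(\ln\tfrac1{|\lambda|})$ uniformly in $\arg\lambda$.

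With these two repairs your upper bound holds, and together with your lower bound and membership step the proof is complete.

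One caution on the lower bound, which you cite rather than prove. The averaged inequality in the paper's proof of Proposition~\ref{T:subsetr} fails already for $x=e_j^{(0)}$ with $r_j>0$: there the circle mean of $|g|$ is strictly below $g(t)$. A sound route is Cauchy's estimate for the single coordinate $f_j^{(p)}\bigl((\lambda-T)^{-1}x\bigr)$. This is a polynomial in $1/\lambda$ with top coefficient $\frac{(2n_0)!\,r_j^{p-n_0}}{(2p)!}f_j^{(n_0)}(x)$. Take $p\approx\sqrt{r_j/|\lambda|}$ and use the phase-uniform bound~\eqref{S4:lnorm}.
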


\begin{proof}
By Proposition \ref{T:subsetl} and \ref{T:subsetr}, we have $k_{T}(x)=\sqrt{l}$ for any non-zero vector $x\in X$.
Since $\sigma$ is right closed, we have $\Lambda(T)\subset \sigma$.
\end{proof}

Now we are ready to give the proof of Theorem \ref{dlB}.

\begin{proof}[Proof of Theorem~\ref{dlB}]
Let $T$ be as in Lemma \ref{T:quani}.
By Proposition \ref{R:lsubset} and Proposition \ref{Prop2}, we have $\Lambda(T)=\sigma$.
\end{proof}



\end{document}